\documentclass[a4paper, 12pt]{article}

\usepackage[utf8]{inputenc}
\usepackage[T1]{fontenc}
\usepackage[a4paper, margin=2.5cm]{geometry}
\usepackage{needspace}
\usepackage{svg}
\setlength {\marginparwidth }{2cm} 
\usepackage{libertine} 
\usepackage{inconsolata} 
\usepackage{amsmath, amsthm, amssymb}
\usepackage{graphicx}
\usepackage{enumerate}
\usepackage{authblk}
\usepackage[textsize=small, textwidth=2cm, color=yellow]{todonotes}
\usepackage{thmtools}
\usepackage[colorlinks=true, linkcolor =blue!80, citecolor=orange!70!black]{hyperref}
\usepackage{float}
\usepackage[capitalize]{cleveref}
\usepackage{comment}
\usepackage{tikz}
\usepackage{multirow}
\usepackage{thm-restate}
\usepackage{soul}

\renewenvironment{abstract}
{\small\vspace{-1em}
\begin{center}
\bfseries\abstractname\vspace{-.5em}\vspace{0pt}
\end{center}
\list{}{
\setlength{\leftmargin}{0.6in}%
\setlength{\rightmargin}{\leftmargin}}%
\item\relax}
{\endlist}

\Crefname{observation}{Observation}{Observations}

\declaretheorem[name=Theorem, numberwithin=section]{theorem}
\declaretheorem[name=Lemma, sibling=theorem]{lemma}

\declaretheorem[name=Corollary, sibling=theorem]{corollary}

\declaretheorem[name=Problem, sibling=theorem]{problem}

\declaretheorem[name=Remark, style=remark, sibling=theorem]{remark}

\def\cqedsymbol{\ifmmode$\lrcorner$\else{\unskip\nobreak\hfil
\penalty50\hskip1em\null\nobreak\hfil$\lrcorner$
\parfillskip=0pt\finalhyphendemerits=0\endgraf}\fi}


\interfootnotelinepenalty=10000

 %

\DeclareMathOperator{\td}{\mathrm{td}}
\DeclareMathOperator{\tw}{\mathrm{tw}}
\DeclareMathOperator{\pw}{\mathrm{pw}}

\DeclareMathOperator{\poly}{poly}

\let\le\leqslant
\let\ge\geqslant
\let\leq\leqslant
\let\geq\geqslant

\thickmuskip=5mu plus 1mu minus 2mu



\renewcommand{\mid}{:}

\title{Faithful universal graphs for minor-closed classes}

\author[1]{Paul Bastide}
\author[2]{Louis Esperet\thanks{Partially supported by the French ANR Projects TWIN-WIDTH
  (ANR-21-CE48-0014-01) and ENEDISC (ANR-24-CE48-7768), and by LabEx
  PERSYVAL-lab (ANR-11-LABX-0025).}}
\author[3]{Carla Groenland\thanks{Supported by NWO grant VI.Veni.232.073.}}
\author[4]{Claire Hilaire\thanks{Partially supported by Slovenian Research and Innovation Agency (research project J1-4008).}}
\author[5]{Clément Rambaud\thanks{Partially supported by the ANR project DIGRAPHS (ANR-19-CE48-0013)}}
\author[6]{Alexandra Wesolek\thanks{ Supported by the DFG under Germany’s Excellence Strategy – The Berlin Mathematics Research Center MATH+ (EXC-2046/1, project ID: 390685689).}}


\affil[1]{Université de Bordeaux, LaBRI, Bordeaux, France}
\affil[2]{Université Grenoble Alpes, CNRS, Laboratoire G-SCOP,
  Grenoble, France}
\affil[3]{Delft Institute of Applied Mathematics, Technische Universiteit Delft, The Netherlands}
\affil[4]{Université Clermont Auvergne, Clermont Auvergne INP, LIMOS, Clermont-Ferrand, France}

\affil[5]{Université Côte d'Azur, I3S, Inria, CNRS, Sophia-Antipolis, France}

\affil[6]{Technische Universität Berlin, Berlin, Germany}

\date{\today}

\begin{document}

\maketitle

\begin{abstract}
It was proved by Huynh, Mohar, Šámal, Thomassen and Wood in 2021 that any countable graph containing every countable planar graph as a subgraph has an infinite clique minor. We prove a finite, quantitative version of this result: for fixed $t$, if a graph $G$ is $K_t$-minor-free and contains every $n$-vertex planar graph as a subgraph, then $G$ has $2^{\Omega(n)}$ vertices.  On the other hand, we construct a polynomial size $K_4$-minor-free graph containing every $n$-vertex tree as an induced subgraph, and a polynomial size $K_7$-minor-free graph containing every $n$-vertex $K_4$-minor-free graph as induced subgraph. This answers several problems raised recently by Bergold, Ir\v{s}i\v{c}, Lauff, Orthaber, Scheucher and Wesolek.

We study more generally the order of universal graphs for various classes (of graphs of bounded degree, treedepth, pathwidth, or treewidth), if the universal graphs retain some of the structure of the original class.
\end{abstract}

\section{Introduction}

A family of graphs $\mathcal{U}=(U_n)_{n\in \mathbb{N}}$ is said to be \emph{induced-universal} for a graph class $\mathcal{G}$ if for any $n\in \mathbb{N}$, $U_n$ contains all the $n$-vertex graphs of $\mathcal{G}$ as induced subgraphs. Similarly, $\mathcal{U}$ is said to be \emph{subgraph-universal} for $\mathcal{G}$ if for any $n\in \mathbb{N}$, $U_n$ contains all the $n$-vertex graphs of $\mathcal{G}$ as subgraph. 

The initial motivation for the introduction of these concepts was the design of configurable chips \cite{BL82,CRS83}, with the idea that a single chip could be used to produce a number of different chips, by simply removing connectors (edges) or components (vertices) in a post-processing phase. This lead to a considerable amount of work, trying to either minimise the number of edges in subgraph-universal graphs \cite{chung1990-separator,chung.graham,bhatt.chung.ea, babai.chung.ea,Val81,bhatt1986optimal,Capalbo02,ACKRRS,AC08,EJM23}, or minimise the number of vertices in induced universal graphs \cite{chung1990-universal-induced-universal, gavoille.labourel:shorter,bonamy.gavoille.ea:shorter, AdjacencyLabellingPlanarJACM,kannan.naor.ea:implicit, Alon17,spinrad:efficient,muller:local,alstrup.dahlgaard.ea:optimal,alstrup.kaplan.ea:adjacency,abrahamsen.alstrup.ea:near-optimal,AlonCapalbo,EJM23}. 

For induced-universal graphs, we give a sample of the best-known  results below. By a slight abuse of notation, when we say that for some function $f$, \emph{a graph class $\mathcal{G}$ has  subgraph-universal (or induced-universal) graphs of order at most $n\mapsto f(n)$}, we mean that $\mathcal{G}$ has a subgraph-universal (or induced-universal) graph family $\mathcal{U}=(U_n)_{n\in \mathbb{N}}$ such that each $U_n$ contains at most $f(n)$ vertices. With this terminology, there are induced-universal graphs of order 
\begin{itemize}
\item $O(n)$ for trees \cite{alstrup.dahlgaard.ea:optimal},
\item $n^{1+o(1)}$ for graphs of bounded treewidth \cite{gavoille.labourel:shorter},
\item $n^{1+o(1)}$ for planar graphs and a number of related graph classes \cite{AdjacencyLabellingPlanarJACM,GJ22,EJM23},
 \item $n^{2+o(1)}$ for proper minor-closed classes \cite{gavoille.labourel:shorter},
 \item $O(n^{\Delta/2})$ for the class of graphs of maximum degree $\Delta$ \cite{AN19}, and
    \item $(1+o(1))2^{(n-1)/2}$ for the class of all graphs \cite{Alon17}.
    \end{itemize}
A classical problem related to induced-universal graphs was the \emph{Implicit graph conjecture} (see \cite{kannan.naor.ea:implicit,spinrad:efficient}), which stated that every hereditary class of graphs which contains at most $2^{O(n\log n)}$ graphs with $n$ vertices has an induced-universal graph of polynomial size. This conjecture was only refuted recently by Hatami and Hatami \cite{HH21}.

\medskip
   
    Most of the results mentioned above are optimal, or close to optimal. The main techniques for constructing induced-universal graphs use the notion of an \emph{adjacency labelling scheme}, introduced by Kannan, Naor and Rudich \cite{kannan.naor.ea:implicit} and Muller \cite{muller:local}. This is a compact data structure describing the adjacency relation between vertices in a graph class. In the translation between the data structure and the universal graph, all the structure of the original graphs is lost and even controlling the number of edges is difficult (see for instance \cite{EJM23}).  

\medskip
    
    Note that without further constraints, it is easy to minimise the number of \emph{vertices} in a subgraph-universal graph for any family: just take $U_n$ to be the complete graph on $n$-vertices (it certainly contains all the $n$-vertex graphs as subgraphs, and cannot be made smaller). So a more interesting goal is to minimise the number of \emph{edges} in a subgraph-universal graph. The best known bounds on the number of edges of subgraph-universal graphs usually come from separator theorems \cite{chung1990-separator}. If the vertex set of any $n$-vertex graph $G\in \mathcal{G}$ can be partitioned into three sets $X_1,S,X_2$, with $|X_1|\le 2n/3$, $|X_2|\le 2n/3$, $|S|=O(n^{1-\varepsilon})$, and no edges between $X_1$ and $X_2$, then the idea is to take (inductively) two subgraph-universal graphs for the graphs of size $|X_1|$ and $|X_2|$ in the class, and add a set of $|S|$ universal vertices. This typically creates large cliques, even if the graphs from  $\mathcal{G}$ have small clique number.

    \paragraph{Faithful universal graphs}
    
    Let us say that a universal graph family $\mathcal{U}=(U_n)_{n\in \mathbb{N}}$ for a graph class $\mathcal{G}$ is \emph{faithful} if for any $n\in \mathbb{N}$, $U_n\in \mathcal{G}$. In the context of chip design, the configurable chip has the same physical constraints as the different chips it is supposed to emulate, so in graph-theoretic terms we expect that the underlying (universal) graph is faithful (for instance, if all graphs underlying the different chips have to be planar, the configurable chip also has to be planar). As suggested above, all the techniques mentioned in the previous paragraph produce universal graphs that are not faithful. Moreover, in the few cases where some non-trivial faithful universal graphs are known, their size is significantly larger than the size of the optimal universal graphs.

     \smallskip

    Consider for instance the case of trees. It was proved by Chung and Graham \cite{chung.graham} that there is a subgraph-universal graph with $\Theta(n\log n)$ edges for trees\footnote{See also \cite{FHT23,KKKW25}, who identified a mistake in the original proof of Chung and Graham and provided corrected versions.},
    and that this bound is best possible. As explained above, the proof uses separators and produces graphs with cliques of logarithmic size, so the resulting graphs are very far from trees (or any other class of sparse graphs). On the other hand, Gol'dberg and Livshits  \cite{GL68} constructed  faithful subgraph-universal graphs for trees on $2^{O(\log^2n)}$ vertices, and this order of magnitude was later shown to be asymptotically best possible \cite{CGC81}. Note that as explained above, while minimising the number of vertices in a subgraph-universal in general is not relevant (because complete graphs contain all graphs of the same order as subgraphs), minimising the number of vertices in a \emph{faithful} subgraph-universal graph is interesting, especially when the original graph class $\mathcal{G}$ is sparse (and thus does not contain large cliques). Note also that while there is no tree of polynomial size containing all trees on $n$ vertices as subgraph, there is a binary tree of size $O(n^4)$ containing all binary trees on $n$ vertices as minor \cite{HWY10}, and this result has interesting applications in algebraic complexity theory in the presence of  non-associativity.

    \paragraph{Countable universal graphs} Beyond the motivation of designing configurable chips, the concept of faithful universal graphs is well-established  in mathematics (though under a different name) and has received a lot of attention over the past 40 years in the context of infinite graphs \cite{ackermann1937widerspruchsfreiheit,komjath1984universal,diestel1985universal,komjath1988some,furedi1997nonexistence,furedi1997existence,komjath1999some,cherlin1999universal,cherlin2001forbidden,cherlin2007universalforbidden,cherlin2007universal,cherlin2016universal,huynh2021universality}. In this context, we say that an (infinite) countable graph \( U \) is \emph{faithful induced-universal} for a class of countable graphs \( \mathcal{G} \) if \( U \in \mathcal{G} \) and \( U \) contains every graph from \( \mathcal{G} \) as an induced subgraph. A well-known example of this notion is the \emph{Rado graph}. Ackermann \cite{ackermann1937widerspruchsfreiheit}, Erd\H{o}s and Rényi \cite{erdos1963asymmetric}, and Rado \cite{rado1964universal} proved that the Rado graph is faithful induced-universal for the class of all countable graphs. The Rado graph has, since then, proven to be useful in solving multiple other questions in combinatorics (see the following surveys \cite{cameron1997random,cameron2001random}). Henson \cite{henson1971family} has proved the existence of a countable faithful induced-universal graph for the class of countable $K_t$-subgraph-free graphs. Since then, there has been a systematic study of faithful universal graphs for the family of countable $H$-free graphs \cite{komjath1984universal,komjath1988some,furedi1997nonexistence,furedi1997existence,komjath1999some,cherlin1999universal,cherlin2001forbidden,cherlin2007universalforbidden,cherlin2007universal,cherlin2016universal}. Diestel, Halin and Vogler \cite{diestel1985some}
proved that for any $t \leq 4$, there does not exist a faithful induced-universal graphs for the family of countable $K_t$-minor-free graphs, and similar results are known when excluding $K_{s,t}$ as a minor \cite{diestel1985universal}.

\medskip

In the context of minor-closed classes of graphs, the following question was raised by Ulam:
\begin{problem}
    Does there exist a countable faithful subgraph-universal graph for the class of countable planar graphs?
\end{problem}
This question was answered negatively by Pach \cite{pach1981problem}. In his paper, Pach also asked whether the  condition of being faithful could be relaxed to obtain a countable subgraph-universal graph for the class of countable planar graphs while still preserving key properties of planar graphs. Following the direction of Pach's question, it is natural to ask whether there exists a subgraph-universal graph $U$ for the class of countable planar graphs, such that $U$ is $H$-minor-free for some graph $H$. Forty years after Pach's results, Huynh, Mohar, Šámal, Thomassen and Wood \cite{huynh2021universality} provided a negative answer to Pach's question. 

\begin{theorem}[\cite{huynh2021universality}]\label{thm:h21}
    If $U$ is a countable subgraph-universal graph for the class of countable planar graphs, then the countable complete graph $K_{\aleph_0}$ is a minor of $U$.
\end{theorem}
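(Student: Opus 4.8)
The plan is to prove the contrapositive: a countable graph $U$ with no $K_{\aleph_0}$ minor is not subgraph-universal for the class of countable planar graphs. Two observations guide the approach. First, one cannot hope to exhibit a single planar graph $P$ that fails to be a subgraph of every such $U$, since each planar graph is itself $K_5$-minor-free and hence $K_{\aleph_0}$-minor-free (just take $U=P$); the obstruction must be that a $K_{\aleph_0}$-minor-free graph lacks the ``capacity'' to host \emph{all} countable planar graphs simultaneously. Second, to make ``capacity'' quantitative one needs a description of $K_{\aleph_0}$-minor-free graphs: I would invoke the structure theorem of Robertson, Seymour and Thomas on excluding an infinite clique minor, which yields a tree-decomposition $(T,(V_t)_{t\in V(T)})$ of $U$ with finite adhesion whose torsos have finite Hadwiger number; in fact one may take the bags finite.

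Fix such a finite-bag tree-decomposition of $U$ and suppose, for contradiction, that $U$ is subgraph-universal for countable planar graphs. The idea is to embed into $U$ a carefully chosen countable planar graph that is locally as rich as possible. For each $m$ pick a highly connected finite planar graph $Q_m$ that is subgraph-universal for the $m$-vertex planar graphs, and consider an embedding of $\bigsqcup_m Q_m$ into $U$. Each copy of $Q_m$, being connected, lives on a subtree of $T$; using the finiteness of the bags together with the high connectivity of $Q_m$, one would argue that this copy is \emph{concentrated}, i.e.\ confined to a ``window'' $W_m$ of $U$ built from a bounded number of torsos and their adhesion cliques. Because $U$ has no $K_{\aleph_0}$ minor, each torso --- hence each window --- excludes some finite complete graph as a minor. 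One then confronts this with extremal bounds for planar-universal graphs: a $K_t$-minor-free graph containing all $n$-vertex planar graphs must have $2^{\Omega(\sqrt n)}$ vertices (the quantitative statement advertised in the abstract, which I would establish separately by a separator-and-counting argument). Balancing the parameters --- the size of $W_m$ against the richness forced on it --- should yield the contradiction.

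I expect two steps to carry the real weight. The first is the concentration claim: planar graphs have connectivity at most $5$, whereas the adhesion of the decomposition, though finite at each node, need not be uniformly bounded, so one cannot simply place a highly connected planar subgraph inside one bag --- controlling how it crosses the finite separators of $T$ requires a Menger/separator analysis together with a sufficiently explicit form of the Robertson, Seymour and Thomas theorem. The second is extracting a genuine contradiction from the confrontation: since a planar graph that is universal for $n$-vertex planar graphs is itself already of exponential size, localizing it into a $K_t$-minor-free window does not by itself overflow the finite bound, and one must work harder --- for instance by iterating the localization, or by exploiting that the excluded clique sizes of the torsos are not uniformly bounded and playing them off against a more refined counting estimate. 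This second point is, I suspect, the true heart of the proof.
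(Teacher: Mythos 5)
Your plan is not a proof: it is a reduction scheme whose decisive step you yourself leave open, and that step fails for a concrete reason. The finite ingredient you want to invoke (a $K_t$-minor-free graph containing all $n$-vertex planar graphs has at least $2^{\Omega(\sqrt{n})}$ vertices, cf.\ Theorem~\ref{thm:Kt free main}) is only a \emph{size} lower bound, and its exponent degrades with $t$ (the true bound is $2^{\Omega(\sqrt{n}/\mathrm{poly}(t))}$). In your set-up the ``windows'' $W_m$ are finite but of unbounded size, so a lower bound on $|W_m|$ contradicts nothing; worse, the excluded-clique order $t_m$ of a window is not uniformly bounded, because excluding $K_{\aleph_0}$ as a minor does \emph{not} force the torsos (or any bounded union of them) to exclude a fixed finite clique --- a countable graph can contain every finite $K_t$ as a minor and still have no $K_{\aleph_0}$ minor (e.g.\ a one-way infinite chain of ever-larger cliques glued at single vertices). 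Once $t_m$ may grow with $|W_m|$, the bound $2^{\Omega(\sqrt{m}/\mathrm{poly}(t_m))}$ becomes vacuous, so ``balancing the parameters'' cannot produce a contradiction without a genuinely new idea; this is exactly the point you flag as ``the true heart of the proof'' and do not supply. The concentration step is also unsubstantiated: finite adhesion is not uniform adhesion, planar graphs are at most $5$-connected, and nothing you say prevents a copy of $Q_m$ from spreading over an unbounded number of torsos; moreover the precise form of the Robertson--Seymour--Thomas structure you rely on (finite bags, torsos of finite Hadwiger number) is asserted rather than verified, and as noted the Hadwiger-number part is false in the uniform sense you need.

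For comparison: the statement you were asked about is quoted from Huynh, Mohar, \v{S}\'amal, Thomassen and Wood and is not reproved in this paper, but the known argument (and the finite analogue developed here) goes in the opposite direction from yours. Rather than decomposing the host graph by structure theory, one uses the universality hypothesis directly: $U$ must contain all (triangulated) grids, one finds a grid together with many pairwise disjoint ``jumps'' (edges of $U$ between non-adjacent grid vertices), and jumps on a grid yield arbitrarily large clique minors (Lemma~\ref{Kt grid one interior bis}); in the infinite setting all jumps may be treated as short, the argument can be run greedily to get $K_t$-minors for every $t$, and a compactness argument upgrades this to a $K_{\aleph_0}$ minor (see the discussion in Section~\ref{sec:ccl}). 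If you want to salvage your approach, you would need a qualitative obstruction (not a size bound) showing that no graph admitting a finite-adhesion decomposition with the RST structure can host all countable planar graphs, which is essentially a different theorem.
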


\paragraph{Finite universal graphs}
From now on, and throughout the rest of the paper, all graphs will be considered finite unless stated otherwise. Ulam’s question, as well as Pach’s follow-up question, can also be considered in the finite setting. Note that for any class of finite graphs \(\mathcal{G}\) that is closed under disjoint union, there exists a very simple family \((U_n)_{n \in \mathbb{N}}\) of faithful induced-universal graphs for \(\mathcal{G}\), which consists of the disjoint union of all \(n\)-vertex graphs in \(\mathcal{G}\). Observe that this solution is far from satisfactory, as the size of \(U_n\) grows quickly with $n$. For instance, in the case of planar graphs, such a disjoint union would have size exponential in $n$. The following is a natural reformulation of Ulam's question in the finite  setting, where \emph{polynomial} plays the role of \emph{countable}, and \emph{exponential} plays the role of \emph{uncountable}.

\begin{problem}
    \label{prob:ulam_finite}
    Is there a constant $k$ such that for any integer $n$,  there exists a planar graph $U_n$ on $n^{k}$ vertices which contains every $n$-vertex planar graph as a subgraph?
\end{problem}

    It was recently proved by Bergold, Ir\v{s}i\v{c}, Lauff, Orthaber, Scheucher and Wesolek \cite{BILOSW24} that any faithful subgraph-universal graph for the class of outerplanar graphs has exponential size, and similarly any faithful subgraph-universal graph for the class of planar graphs has exponential size, giving  a negative answer to Problem \ref{prob:ulam_finite}. On the other hand, they showed that there is a subgraph-universal family of outerplanar graphs of polynomial order for the class of trees, and a subgraph-universal family of planar graphs of  order $n\mapsto 2^{O(\log^2 n)}$ for the class of outerplanar graphs. They raised the following problem, which can be viewed as a finite version of Pach’s question mentioned above.

\begin{problem}[\cite{BILOSW24}]\label{pro:1}
Is it true that for any $t\in \mathbb{N}$, there is a family of $K_{t+1}$-minor-free subgraph-universal graphs of  polynomial order for the class of $K_t$-minor-free graphs?
\end{problem}
More generally, we can ask whether we can  find universal graphs of reasonable order (say polynomial) that retain some of the structure of the original graph class. 

\begin{problem}\label{pro:2}
Is it true that for any graph $H$, there is a graph $H'$ and a  family of $H'$-minor-free subgraph-universal graphs of polynomial order for the class of $H$-minor-free graphs?
\end{problem}
    The authors of \cite{BILOSW24} also raised the problem of extending their constructions to induced-universal graphs, in particular:

    \begin{problem}[\cite{BILOSW24}]\label{pro:3}
Is there a family of planar induced-universal graphs of polynomial order for the class of all trees?
\end{problem}

\subsection{Our results}

In this paper we prove lower and upper bounds on the order of universal graphs for various well-studied classes (of bounded degree, treedepth, pathwidth, treewidth, or $K_t$-minor-free graphs) if the universal graphs are required to be faithful, or only "approximately" faithful.  Note that lower bounds on the order of subgraph-universal graphs hold for induced-universal graphs as well, while upper bounds for induced-universal graphs hold for subgraph-universal graphs as well. Our main results are the following:

\begin{theorem}\label{thm:intro1}
    Any $K_t$-minor-free graph containing all planar $n$-vertex graphs as subgraph has order at least $2^{\Omega_t(n)}$.
\end{theorem}

This can be seen as a finite version of Theorem \ref{thm:h21}. 
Theorem \ref{thm:intro1} provides a strong negative answer to Problem \ref{pro:2} and to Problem \ref{pro:1} for $t\ge 5$. On the other hand,  we prove  the following positive results, which have direct consequences for $t\le 4$.
\begin{theorem}\label{thm:intro3}
For any integer $k\ge 1$, there is a constant $c_k$ such that for any $n$, there is a graph of treewidth $3k-1$ on $n^{c_k}$ vertices which contains all $n$-vertex graphs of treewidth $k$ as an induced subgraph. In particular,
\begin{itemize}
\item there is a $K_4$-minor-free graph of polynomial order which contains every $n$-vertex tree as an induced subgraph, and
    \item there is a $K_7$-minor-free graph of polynomial order which contains every $n$-vertex $K_4$-minor-free graph as an induced subgraph.   
\end{itemize}    
\end{theorem}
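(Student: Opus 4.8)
The plan is to construct, for each fixed $k$, an induced-universal graph of polynomial order and treewidth $3k-1$ for the class of $n$-vertex graphs of treewidth $k$. First I would fix a graph $G$ on $n$ vertices with $\tw(G)\le k$, take a nice tree decomposition $(T,\{B_t\}_{t\in V(T)})$ of width $k$, and recall the standard fact that one may assume $T$ has $O(n)$ nodes and, after a further balancing step, that $T$ can be taken to be a rooted binary tree of height $O(\log n)$ (a balanced tree decomposition; this is classical, e.g.\ via recursive separators). The key structural idea is that along any root-to-node path in $T$, the bags "slide" one vertex at a time, so the relevant information at a node is its current bag (of size $\le k+1$) plus a bounded amount of bookkeeping. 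I would then encode a node $t$ of $T$ by the pair consisting of (i) the $O(\log n)$-length binary string describing the path from the root to $t$, and (ii) for each of the $k+1$ slots of the current bag, the identity of the vertex of $G$ occupying it — but crucially, rather than naming that vertex globally, I record only the \emph{slot and depth at which it was introduced}, which is again $O(\log n)$ bits per slot. This gives each node of $T$ a label of length $O_k(\log n)$, hence there are only $n^{O_k(1)}$ possible labels; these labels form the vertex set of the universal graph $U_n$.

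Next I would define adjacency in $U_n$ so that it simultaneously (a) makes $U_n$ contain every such $G$ as an induced subgraph and (b) keeps $\tw(U_n)\le 3k-1$. For (a), map each vertex $v$ of $G$ to the label of the \emph{topmost} node of $T$ whose bag contains $v$ (its "introduce node"); two labels are declared adjacent in $U_n$ exactly when they \emph{could} correspond, in some valid decomposition consistent with the path information, to two vertices sharing a bag with an edge between them — concretely, when one label's path is a prefix of the other's (or they agree up to the relevant point) and the slot/depth data is mutually consistent. One must check that for the actual $G$ this recovers precisely $E(G)$ (soundness: no spurious edges appear because the introduce-node of an edge's endpoints is uniquely determined and the labels carry enough information to certify the edge; completeness: every edge of $G$ is witnessed). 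For (b), I would exhibit a tree decomposition of $U_n$ itself: index it by the \emph{same} combinatorial tree of height $O(\log n)$ of path-prefixes, and put into the bag of a prefix $p$ all labels whose path is exactly $p$ \emph{together with} the $\le k+1$ "ancestor slot labels" still active at $p$ and the $\le k+1$ labels of the two children slots — a careful count of how many label-families can be simultaneously active gives a bag size of $3(k+1)$, i.e.\ width $3k-1$. (The factor $3$ is the expected source of the $3k-1$ rather than $k$: one copy for the bag being "built", and the two children of a binary node each contribute a partially-overlapping copy.)

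The two stated consequences follow by specialization: $k=1$ gives treewidth $2$, i.e.\ $K_4$-minor-free, universal for trees; and $k=3$ gives treewidth $8$; but $K_4$-minor-free graphs have treewidth $2$, so taking $k=2$ (treewidth $2$, which includes all $K_4$-minor-free graphs) and plugging into the construction yields treewidth $5$ — however to land on the claimed $K_7$-minor-free (treewidth $\le 6$) bound one uses $k=2$ and the slightly sharper accounting that the "child" contributions can be merged into a single shared block of size $k+1$ rather than two, giving $2(k+1)=6$; I would present the general bound as $3k-1$ and note that for the $K_7$ application a direct argument shaves it to $6$ when $k=2$.

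\medskip

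The main obstacle I anticipate is \textbf{step (b)}: controlling the treewidth of $U_n$. It is easy to define a rich enough adjacency relation to embed all treewidth-$k$ graphs, but the danger is that the union of all these embeddings forces $U_n$ to have large treewidth (this is exactly the phenomenon, noted in the introduction, whereby separator-based universal constructions create large cliques). The delicate point is to choose the label encoding — in particular the decision to record slot/depth rather than global vertex names, and to keep the branching structure of $T$ balanced and binary — so that all embeddings are "aligned" along one common host tree decomposition, and then to verify that no bag of that host decomposition needs to contain more than $3(k+1)$ mutually-active label families. Getting the constant right (and ensuring soundness of adjacency is not broken by the compression of vertex names into short labels) is where the real work lies; the polynomial bound on $|V(U_n)|$ and the induced-subgraph property are comparatively routine once the encoding is fixed.
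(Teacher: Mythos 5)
Your overall strategy is the same as the paper's (embed a balanced, logarithmic-depth tree-decomposition of each graph into one ``universal tree-decomposition'' of polynomial size), but as written the plan has two genuine gaps. First, you assume that after balancing you still have a binary tree-decomposition of width $k$ with height $O(\log n)$ and that each node carries a bag of size $\le k+1$. This is false in general: an $n$-vertex path has treewidth $1$, yet every width-$1$ tree-decomposition of it is forced to be path-like and hence has depth $\Omega(n)$; balancing to depth $O(\log n)$ necessarily increases the width to roughly $3k$ (this is exactly Lemma~\ref{lem:log_depth}, and it is the sole source of the $3k-1$ in the theorem --- the paper's host graph simply has bags of size $3k$, hence treewidth $3k-1$). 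Your alternative derivation of the constant, ``bag size $3(k+1)$, i.e.\ width $3k-1$,'' is arithmetically wrong (bag size $3(k+1)$ gives width $3k+2$), and this inconsistency surfaces in your discussion of the $K_7$ consequence: with the correct bound $3k-1$ and $k=2$ you get treewidth $5$, which is already $K_7$-minor-free because $K_7$ has treewidth $6$; no ``sharper accounting'' or ``shaving to $6$'' is needed, and the fact that you felt it was needed indicates your width bookkeeping does not actually deliver $3k-1$.

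Second, the adjacency rule you propose on labels cannot give \emph{induced} universality. Your labels record only the root-to-node path and the slot/depth at which each bag element was introduced, not which pairs inside the bag are edges of $G$; if adjacency in $U_n$ is declared whenever two labels are ``mutually consistent,'' then two vertices of $G$ that share a bag but are non-adjacent get mapped to labels that your rule makes adjacent, creating spurious edges and destroying the induced embedding (you would obtain at best a subgraph-universal graph, and then repairing it via Lemma~\ref{lem:sub to ind} would blow the treewidth up exponentially in $k$, as the paper notes). The paper's construction handles this by branching, at every node of the universal decomposition tree and for every subset $S$ of the parent bag, over \emph{all} graphs $H$ on $3k$ vertices whose restriction to $S$ agrees with the already-built adjacency, and realizing each such $H$ on a fresh child bag; since there are only $2^{O(k^2)}$ choices per node and depth $O(\log n)$, the order stays $n^{O_k(1)}$ and the width stays $3k-1$. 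Your scheme needs the analogous enrichment --- the bag's induced-subgraph type must be part of the label (equivalently, of the host structure) --- before either the soundness of the embedding or the claimed treewidth bound can be verified.
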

In particular, since both $K_{3,3}$ and $K_5$ have a $K_4$-minor, the result above shows that there is a planar graph of polynomial order that contains every tree as induced subgraph, providing a positive answer to Problem~\ref{pro:3}.

\smallskip

It follows from the results above that for any proper minor-closed class $\mathcal{G}$ the following holds.

\begin{itemize}
    \item If $\mathcal{G}$ contains all planar graphs, then for any fixed proper minor-closed class $\mathcal{G}'$, any  graph from $\mathcal{G}'$ containing all $n$-vertex graphs from $\mathcal{G}$ as subgraph has size $2^{\Omega(n)}$.
    \item If $\mathcal{G}$ excludes some planar graph, then $\mathcal{G}$ has bounded treewidth and thus 
    there is a proper minor-closed class $\mathcal{G}'$ of bounded treewidth such that for all $n$, there is a graph from $\mathcal{G}'$ of size polynomial in $n$ that contains all $n$-vertex graphs from $\mathcal{G}$ as induced subgraph.
\end{itemize}

The remainder of our results is summarized in the following tables.

\begin{table}[H]\label{ta:1}
\begin{center}
\begin{tabular}{|c|c||c|c|}
    \hline Class  &  Exact &  \multicolumn{2}{c|}{Approximation} \\
    \hline \hline  treedepth $k \geq 1$ & $O(n^{k-1})$ (\ref{td upper bound})& -- & -- \\
    \hline pathwidth $k \geq 1$ & $2^{\Omega(n\log k)}$ (\ref{cor:lmtw}) for $k\ge 2$ & pathwidth $k^2+k-1$ & $\le n^{k+1}$ (\ref{pw upper bound})\\
    \hline treewidth $k \geq 1$ & $2^{\Omega(n\log k)}$ (\ref{cor:lmtw}) for $k\ge 2$ & treewidth $3k-1$ & $n^{O_k(1)}$ (\ref{tw upper bound}) \\
        \hline max. degree $\Delta\ge 3$ & --  & max. degree $n^{o(1)}$ & $\ge n^{(\Delta/2-1-o(1)) n}$ (\ref{thm:degree})\\
\hline
\end{tabular}
\caption{Results for \textbf{subgraph-universal}  graphs.}
\end{center}
\end{table}

\begin{table}[H]\label{ta:2}
\begin{center}
\begin{tabular}{|c||c|c|}
   \hline Class  &   \multicolumn{2}{c|}{Approximation} \\
    \hline \hline treedepth $k \geq 1$ &  treedepth $k\cdot 2^k$ & $O(2^{k} n^{k-1})$  (\ref{cor: td upper bound iu})\\
    \hline pathwidth $k \geq 2$  & pathwidth $2^{O(k^2)}$ & $2^{O(k^2)}n^{k+1}$ (\ref{cor: pw upper bound})\\
    \hline treewidth $k \geq 1$  & treewidth $3k-1$ & $n^{O_k(1)}$ (\ref{tw upper bound}) \\
\hline
\end{tabular}
\caption{Results for \textbf{induced-universal}  graphs.}
\end{center}
\end{table}

\begin{table}[H]
\begin{center}
\begin{tabular}{|c|c|c|}
   \hline  $k$  &  $t$ &  Order  \\
    \hline \hline \multirow{3}*{4} & 4  & $2^{\Omega(n)}$ (\ref{cor:lmtw}) \\
    \cline{2-3}  & 5 & $n^{O(\log n)}$ (\ref{thm:K5-m-free universal for K4-m-free}) \\
    \cline{2-3}  & 7 & $n^{O(1)}$ (\ref{cor:poly upper bound2}) \\
    \hline $k\ge 5$ & $k$ & $\geq 2^{(n-k-2)/(k-2)}$ (\ref{lem:cor_kt_lowerbound}) \\
    \hline $k\ge 5$ & $t\ge k$ & $2^{\Omega(n/\poly(t))}$ (\ref{thm:Kt free main})\\\hline
\end{tabular}
\label{table:3}
\caption{Bounds on the order of a $K_t$-minor-free graph containing all $n$-vertex $K_k$-minor-free graphs as subgraph. Note that the upper bound for $k=4$ and $t=7$ holds for induced subgraphs as well. }
\end{center}
\end{table}
In~\cite{BILOSW24}, the lower bound $n\mapsto n^{-3/2}(27/4)^{n}$  is obtained for the order of a faithful subgraph-universal graph for the class of planar graphs and the suggestion was made that the proof technique may extend to $K_t$-minor-free graphs. We in fact generalise the proof technique of another lower bound from this paper (the lower bound of  $n\mapsto 2^{(n-5)/2}$ for a subgraph-universal outerplanar graph for pathwidth 2 graphs) to show that any graph of treewidth $k$ containing all edge-maximal $n$-vertex graphs of pathwidth $k$ has size $2^{\Omega(n\log k)}$.
We provide a lemma (\cref{lem:universal_vx_removal_lemma}) that allows us to add a universal vertex to $H$ in a lower bound for faithful $H$-minor-free universal graphs and use this to obtain a lower bound of $n\mapsto 2^{\Omega_t(n)}$ for the order of faithful subgraph-universal graphs for the class of $K_t$-minor-free graphs.
The proof also yields an exponential lower bound for faithful subgraph-universal planar graphs but with a slightly worse constant than that of \cite{BILOSW24}. 

These lower bounds are (nearly) best possible. For example, the disjoint union of the edge-maximal $n$-vertex graphs of pathwidth $k$ has size $2^{O(n\log k)}$ (and contains all $n$-vertex graphs of pathwidth $k$ as subgraph) and similarly the disjoint union of $n$-vertex $K_t$-minor-free graphs has size at most $2^{O_t(n)}$ (and is of course induced-universal for that class). This matches our lower bounds up to the constant term in the exponent. 

\smallskip

For the class of graphs of maximum degree $\Delta$, our lower bound $n^{(\Delta/2-1-o(1))n}$  (see also Theorem~\ref{thm:degree}) is very nearly best possible, since a disjoint union of $n$-vertex graphs of maximum degree $\Delta$ would give an induced-universal graph of order $n^{(\Delta/2+o(1))n}$ with maximum degree $\Delta$. This lower bound also applies when we only require the universal graph to be subgraph-universal and allow a much larger maximum degree (smaller than any polynomial in $n$). 

\smallskip

All of our (upper bound) constructions are relatively simple. The induced-universal graph of treewidth~$3k-1$ for graphs of treewidth~$k$ uses the fact that it is possible to find a tree-decomposition of width $3k-1$ of logarithmic depth for every graph of treewidth~$k$. We then create a ``universal tree-decomposition'' for such tree-decompositions with logarithmic depth and degree $O_k(1)$, which has therefore $n^{O_k(1)}$ vertices. The subgraph-universal construction for pathwidth is an inductive pathwidth-specific construction. We provide a lemma (Lemma~\ref{lem:sub to ind}) to turn degenerate subgraph-universal graphs into (not much larger) induced-universal graphs that we apply to the class of graphs of bounded pathwidth. This lemma is based on the ``standard'' adjacency  labelling scheme for degenerate graphs. Finally, using ideas from~\cite{BILOSW24}, we also give a subgraph-universal graph of treewidth 3 (so $K_5$-minor-free) of quasi-polynomial order for treewidth 2 (i.e. $K_4$-minor-free) graphs. 

\subsection*{Proof technique of the main result}
To prove our main result (Theorem~\ref{thm:intro1}), we show in Theorem~\ref{thm:Kt free main} that there is a polynomial function $p(t)$ of $t$ such that any $K_t$-minor-free graph containing all $n$-vertex planar graphs as subgraph has order at least $2^{\Omega(n/p(t))}$. We follow the approach used by Huynh, Mohar, Šámal, Thomassen and Wood \cite{huynh2021universality} in the infinite case.
If a universal graph $U$ contains all planar graphs, then in particular, it contains all triangulated grids. Note that while in the infinite case there is an uncountable number of such triangulated grids, in the finite case this number is exponential. We then try to find a triangulated grid $G$ in $U$ where there are many vertices in $G$ that are connected in $U$ but not in $G$ (which we call \textit{jumps}). If we have $\Omega_t(1)$ jumps, then the jumps plus $G$ create a $K_t$-minor in $U$ (see Lemma~\ref{Kt double grid one interior bis} for a formal statement). 

To find the jumps, we note that when many triangulated grids can be embedded on a relatively small vertex set, then certainly two grids $G_1$ and $G_2$ must intersect in a ``non-trivial manner''. 
For example, suppose that $G_1$ and $G_2$ are grids embedded in $U$ and there is a path between $u,v\in V(G_1)\cap V(G_2)$ with all internal vertices in $V(G_1)\setminus V(G_2)$. This gives rise to a ``jump'' in $G_2$, except when $uv\in E(G_2)$. But if $uv\in E(G_2)$, then we can use this as a ``jump'' in $V(G_1)$ if we choose $u$ and $v$ such that $uv\not\in E(G_1)$. So our aim is to create many such paths in $G_1$ that are all vertex-disjoint.

Using a VC-dimension argument, it is possible to find triangulated grids $G_1$ and $G_2$ embedded into $U$ with $|V(G_1)\cap V(G_2)|$ of ``medium'' size. However, this is not sufficient: for example, $G_1$ and $G_2$ could be embedded on a sphere with the  intersection along their boundaries without creating a $K_t$-minor. Instead, we use a VC-dimension argument to control which vertices are in $G_2$ for a set of vertices of size $d$ in $G_1$. We use this to create many jumps in either $G_1$ or $G_2$, eventually leading to a $K_t$-minor.

\subsection*{Organisation of the paper} 
We start with some preliminary definitions and results in Section \ref{sec:prel}. In Section \ref{sec:poly} we construct small universal graphs for graphs of bounded treedepth, pathwidth, and treewidth, proving in particular Theorem \ref{thm:intro3}. In Section \ref{sec:lb} we prove lower bounds on the order of universal graphs which are faithful (or only retain some properties of the original classes) for various classes, such as graphs of bounded degree, or graphs of bounded pathwidth or treewidth. Section \ref{sec:lbgrid} is devoted to the proofs of Theorem \ref{thm:intro1}
above. We conclude with a discussion and some open problems in Section \ref{sec:ccl}.
\section{Preliminaries}\label{sec:prel}

Given a graph $G$ and a set of vertices $V\subseteq V(G)$, we denote by $G[V]$ the subgraph of $G$ \emph{induced} by $V$, which is the graph with vertex set $V$ such that for every couple of vertices $u,v$ of $V$, $u$ and $v$ are adjacent in $G[V]$ if and only if they are adjacent in $G$.
Similarly, given a set of edges $E\subseteq E(G)$, we denote by $G[E]$ the graph whose vertex set is the set of endpoints of the edges in $E$, and whose edge set is $E$.

\medskip

A \emph{tree-decomposition} of a graph $G$ is a collection $(B_t :t\in V(T))$ of subsets of $V(G)$ (called \emph{bags}) indexed by the nodes of a tree $T$, such that 

\begin{enumerate}
    \item for every edge $uv\in E(G)$, some bag $B_x$ contains both $u$ and $v$, and
    \item for every vertex $v\in V(G)$, the set $\{t\in V(T):v\in B_t\}$ induces a connected subgraph of $T$.
\end{enumerate}
The \emph{width} of $(B_t:t\in V(T))$ is $\max\{|B_t| \colon t\in V(T)\}-1$. 
For $st\in E(T)$, we call $B_s\cap B_t$ an \textit{adhesion}.
The \emph{treewidth} of a graph $G$, denoted by $\mathrm{tw}(G)$, is the minimum width of a tree-decomposition of $G$.  
A \emph{path-decomposition} is a tree-decomposition in which the underlying tree is a path, simply denoted by the corresponding sequence of bags $(B_1,\dots,B_k)$. Similarly, the \emph{pathwidth} of a graph $G$, denoted by $\mathrm{pw}(G)$, is the minimum width of a path-decomposition of $G$. 

For $k\geq 1$, a \emph{$k$-tree} is a graph that can be obtained from a clique of size $k+1$ by iteratively adding vertices whose neighbourhood is a clique of size $k$.
In particular, a $k$-tree admits a tree-decomposition such that each bag has size $k+1$ and induces a clique, and each adhesion has size $k$. 

Analogously to the $k$-trees and the notion of \textit{simple treewidth} (see \cite{HW25} and the references therein), we call a graph $G$ a \textit{simple $k$-path} if it has a path-decomposition $(X_1,\dots,X_m)$ of width $k$ such that 
\begin{itemize}
\item all bags have size $k+1$ (that is $|X_i|=k+1$ for all $i\in [m]$);
    \item all adhesions sets have size $k$ (that is, $|X_i\cap X_{i+1}|=k$ for all $i\in [m-1]$);
    \item all adhesions sets are different ($X_{i}\cap X_{i+1}\neq X_{j}\cap X_{j+1}$ for all distinct $i, j\in [m-1]$);
    \item $G$ is edge-maximal with respect to the path-decomposition $(X_1,\dots,X_m)$ (i.e., $G[X_i]$ induces a clique for each $i\in [m]$).
\end{itemize}
Simple 2-paths are also called \textit{path-like outerplanar graphs} in \cite{BILOSW24}.

\medskip 

We now collect a few useful facts about tree-decompositions that will be needed later. We say a tree-decomposition $(B_t:t\in V(T))$ is \emph{reduced}, if for any $t\neq t'\in V(T)$ it holds that $B_t\not\subseteq B_{t'}$. In particular, in a reduced tree-decomposition, all adhesions sets have size at most $k$ (that is, $|B_s\cap B_{t}|\leq k$ for all $st \in E(T)$).
It can be observed that every $k$-tree has a reduced tree-decomposition in which the bags of every two adjacent nodes differ by exactly one vertex.
\begin{lemma}\label{lem:2-trees}
Suppose that $(B_t:t\in V(T))$ is a reduced tree-decomposition of a graph $G$ of width $k$. Then $|V(T)|\leq \max(|V(G)|-k,1)$. If $G$ is a $k$-tree then equality holds. 
\end{lemma}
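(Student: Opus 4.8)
**Plan for proving Lemma 2.1 (the bound $|V(T)| \le \max(|V(G)|-k,1)$ for reduced tree-decompositions of width $k$, with equality for $k$-trees).**

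The plan is to induct on $|V(T)|$. If $|V(T)|=1$ the bound $1 \le \max(|V(G)|-k,1)$ is trivial, so assume $|V(T)| \ge 2$ and pick a leaf $\ell$ of $T$ with unique neighbour $m$. The key observation is that a reduced tree-decomposition must have the property that for every edge $st \in E(T)$, $B_s \setminus B_t \neq \emptyset$ and $B_t \setminus B_s \neq \emptyset$ — the second because otherwise $B_t \subseteq B_s$ contradicts reducedness, and the first symmetrically. In particular $B_\ell \setminus B_m \neq \emptyset$; pick a vertex $v \in B_\ell \setminus B_m$. Since the nodes whose bags contain $v$ form a connected subtree of $T$ and $v \notin B_m$, that subtree is contained in $T - m$, hence in the component of $T-m$ containing $\ell$; but $\ell$ is a leaf, so that component is just $\{\ell\}$, i.e. $v$ appears in no bag other than $B_\ell$.

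First I would delete $\ell$ from $T$ and delete from $G$ all vertices of $V(G)$ that appear only in $B_\ell$ (this set is nonempty, as just shown), call the resulting graph $G'$ and the resulting decomposition $(B_t : t \in V(T-\ell))$. One checks quickly that this is still a tree-decomposition of $G'$ of width $\le k$: connectivity of the $v$-subtrees is preserved (we only removed a leaf, and we removed vertices whose subtree was exactly $\{\ell\}$), and every edge of $G'$ still has both endpoints in a common remaining bag, since any edge incident to a deleted vertex had that vertex only in $B_\ell$, so the edge is not in $G'$. It remains reduced, as deleting vertices from all bags of a reduced decomposition keeps it reduced. By induction $|V(T-\ell)| \le \max(|V(G')|-k,1) \le |V(G')|-k+1$, wait — more carefully, $|V(T-\ell)| \le \max(|V(G')|-k,1)$, and since $|V(G')| \le |V(G)|-1$ we get $|V(T)| = |V(T-\ell)|+1 \le \max(|V(G')|-k,1)+1 \le \max(|V(G)|-k,1)$, where the last inequality needs the small case-check that when $|V(G')|-k \le 0$ we still have $|V(T)| \le \max(|V(G)|-k,1)$ — here $|V(T-\ell)|\le 1$ forces $|V(T)|\le 2$ and $|V(G)|\ge |V(G')|\ge k$... one has to be slightly careful that $|V(G')| \ge k+1$ actually (a width-$k$ decomposition on $\ge 1$ node with the reduced structure we need); I would nail this base-case bookkeeping down rather than wave at it.

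For the equality statement when $G$ is a $k$-tree: here I would use the remark stated just before the lemma, that a $k$-tree has a reduced tree-decomposition in which adjacent bags differ by exactly one vertex (so $|B_\ell \setminus B_m|=1$, meaning exactly one vertex is removed at each leaf-peeling step), and that a $k$-tree on $k+1$ vertices is a single clique with a one-node decomposition. Then the same induction gives $|V(T)| = |V(G)|-k$ exactly. The main subtlety to get right is not the induction itself but the interaction with the $\max(\cdot,1)$: I would phrase the whole argument so that $G'$ always has at least $k+1$ vertices (equivalently, the decomposition always has a bag of size $k+1$), which holds because a reduced width-$k$ decomposition on $\ge 1$ vertex, once we are in the inductive regime, contains a bag of full size — and that is the only place real care is needed.
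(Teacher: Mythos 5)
Your overall strategy --- peel off a leaf $\ell$, delete the vertices private to $B_\ell$ (which, as you note, are exactly $B_\ell\setminus B_m$ by connectivity), and induct --- is the same as the paper's, but there is a genuine gap in the inductive step. You remove an \emph{arbitrary} leaf and then invoke the induction hypothesis with the same parameter $k$, i.e.\ the bound $|V(T-\ell)|\le\max(|V(G')|-k,1)$. After the deletion, the decomposition of $G'$ is only guaranteed to have width \emph{at most} $k$, and for reduced decompositions of width strictly smaller than $k$ this bound is simply false. Concretely, take $k=2$ and the reduced path-shaped decomposition with bags $B_1=\{a,b,x\}$, $B_2=\{b,c\}$, $B_3=\{c,d\}$, $B_4=\{d,e\}$ of the graph $G$ obtained by making each bag a clique; it has width $2$. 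If you peel the leaf $B_1$ (the unique bag of size $k+1$), then $G'$ is the path on $\{b,c,d,e\}$ with bags $B_2,B_3,B_4$, and $|V(T-\ell)|=3>\max(|V(G')|-2,1)=2$, so the statement you want to feed into the induction fails for $G'$. Your closing remark that one should arrange for ``the decomposition always [to have] a bag of size $k+1$'' identifies exactly the property that is needed, but you assert it rather than secure it, and with an arbitrary leaf choice it does not hold (as the example shows); note also that ``$G'$ has at least $k+1$ vertices'' is \emph{not} equivalent to it (the $G'$ above has $4\ge 3$ vertices but no bag of size $3$).

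The paper closes this gap with a specific leaf-selection rule: remove a leaf whose bag has size at most $k$ if such a leaf exists, and otherwise an arbitrary leaf. In the first case some bag of size $k+1$ lies elsewhere and survives; in the second case every leaf has a full bag and $T$ has at least two leaves, so again a full bag survives. Consequently the decomposition of $G'$ still has width exactly $k$, which forces $|V(G')|\ge k+1$, hence $\max(|V(G')|-k,1)=|V(G')|-k$, and the chain $|V(T)|=|V(T')|+1\le |V(G')|-k+1\le |V(G)|-k$ goes through (this also disposes of the $\max(\cdot,1)$ bookkeeping you flagged). One could instead salvage your version by strengthening the induction to quantify over all widths and comparing the width drop with the number of deleted vertices, but the leaf rule is the cleaner fix. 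Your plan for the equality statement for $k$-trees is fine once this is repaired, since in the special reduced decomposition of a $k$-tree all bags have size $k+1$ and adjacent bags differ in exactly one vertex, so the problematic situation never arises there.
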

\begin{proof}
We proceed by induction on $|V(T)|$. If $T$ has a single vertex, then $G$ contains up to $k+1$ vertices since $(B_t:t\in V(T))$ has width $k$, and thus $|V(T)|=1= \max(|V(G)|-k,1)$. Assume now that $T$ contains at least two vertices. We consider a leaf $\ell$ of $T$ for which the bag $B_{\ell}$ has at most $k$ vertices, if such a leaf exists, otherwise we choose the leaf arbitrarily. Let $y$ be the unique neighbour of $\ell$. The bag $B_{\ell}$ contains a vertex which is not in $B_y$ since the tree-decomposition is reduced.  Then for $T'=T\setminus \ell $ it holds that $(B_t:t\in V(T'))$ is a reduced tree-decomposition of $G'=G[V(G) \setminus (B_{\ell}\setminus B_y)]$. Further, at least one bag of this tree-decomposition has size $k+1$ by the choice of $\ell$, and thus $(B_t:t\in V(T'))$ has width $k$, and $G'$ has at least $k+1$ vertices.  Using the induction hypothesis, $|V(T)|=|V(T')|+1\leq |V(G')|-k+1 \leq |V(G)|-k$. 

The same induction shows that equality holds for $k$-trees, by the observation above stating that $k$-trees have a reduced tree-decomposition in which the bags of every two adjacent nodes differ by exactly one vertex (and thus $|B_{\ell}\setminus B_y|=1$ and $|V(G')|=|V(G)|-1$). 
\end{proof}

We say that a graph $G$ is \emph{$d$-degenerate} if there exists an order $v_1,\ldots,v_n$ on the vertices of $G$ such that for every $1\le i \le n$, $v_i$ has at most $d$ neighbours in $\{v_j : j< i\}$.
In particular, it is well-known that graphs with treewidth $k$ are $k$-degenerate.

\section{Polynomial near-faithful (induced) universal graphs}\label{sec:poly}

We recall the following result mentioned in the introduction.

\begin{theorem}[\cite{chung.graham,GL68}]\label{tree exact} 
    The class of trees has faithful induced-universal graphs of order  $n\mapsto 2^{O(\log^2n)}$, and this order is best possible.
\end{theorem}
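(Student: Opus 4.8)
The statement to prove is Theorem~\ref{tree exact}: trees have faithful induced-universal graphs of order $2^{O(\log^2 n)}$, and this is optimal. The upper bound is the classical Gol'dberg--Livshits construction (adapted to the induced setting), and the lower bound is due to Chung--Graham / Chung; since the paper cites \cite{CG83,GL68}, the natural "proof" here is a clean exposition of the recursive construction together with the matching lower bound. I would organize it as two separate arguments.

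For the upper bound I would build $U_n$ recursively using \emph{centroid decomposition}. Every $n$-vertex tree $T$ has a centroid vertex $c$ whose removal leaves components each of size at most $n/2$; more usefully, one can find an edge or vertex separator so that $T$ decomposes into a root-like piece and at most two (or a bounded number of) subtrees of size $\le n/2$ each, with the root attaching to each subtree at a single vertex. This suggests defining $U_n$ to contain: a ``spine'' of $O(\log n)$ levels, where a tree of size in $(n/2^{i+1}, n/2^i]$ is represented; at each level we take a copy of $U_{n/2}$ (recursively) for the two halves, plus a small gadget recording how the centroid path threads through them. Unrolling the recursion $T(n) \le 2\,T(n/2) \cdot n^{O(1)}$ gives $T(n) = 2^{O(\log^2 n)}$. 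The key point making this \emph{faithful} (the host is itself a tree) and \emph{induced} (not merely subgraph) is that in a tree, an induced subgraph isomorphic to $T$ is the same as a subgraph isomorphic to $T$ together with the guarantee that no extra edges appear — and since $U_n$ is a tree, it has no extra edges to worry about, so induced-universality for trees among tree hosts is equivalent to subgraph-universality. That observation lets me import the Gol'dberg--Livshits subgraph-universal tree directly.

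For the lower bound I would count: a faithful induced-universal graph $U_n$ for trees is itself a tree on, say, $N$ vertices, and it must contain every $n$-vertex tree as a subtree (subgraph = induced subgraph here). The number of (unlabelled) $n$-vertex trees is $2^{\Theta(n)}$, but that only yields $N = 2^{\Omega(n/\log N)}$-type bounds which are too weak; the real argument (Chung--Graham--Coppersmith / Chung) is more delicate and uses the structure of which trees can co-embed in a single host tree. The standard approach considers \emph{caterpillars} or \emph{complete binary trees with attached paths}: one shows that if $U_n$ contains all ``broom''-like trees of a given shape, then a weight/entropy argument on the root paths forces $\log N = \Omega(\log^2 n)$. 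Concretely, one fixes a class of roughly $n^{\Theta(\log n)}$ trees that are pairwise ``incompatible'' in the sense that no tree on fewer than $n^{\Theta(\log n)}$ vertices can host two of them, and concludes. I would cite \cite{CG83} for this and sketch only the incompatibility family.

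\textbf{Main obstacle.} The genuinely hard part is the lower bound: the naive counting of $n$-vertex trees gives only $\log N = \Omega(n)$-over-$\log N$, i.e. essentially nothing past $N \ge$ subexponential, and certainly not the precise $2^{\Omega(\log^2 n)}$ threshold. Getting the exponent right requires the combinatorial heart of Chung's argument — identifying a family of trees whose pairwise least common host tree is large — and I would not reprove this from scratch but invoke \cite{CG83}, presenting the construction side (the upper bound) in full and the lower bound as a careful citation with the key family made explicit. The upper-bound side has its own minor subtlety: ensuring the recursive host graph stays a genuine tree (no cycles introduced when gluing the level-$i$ gadgets to the recursive copies), which is handled by always attaching recursive copies at fresh leaves of the spine.
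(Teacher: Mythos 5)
First, a point of comparison: the paper does not prove Theorem~\ref{tree exact} at all --- it is imported verbatim from \cite{GL68} (the $2^{O(\log^2 n)}$ faithful construction) and \cite{CG83,CGC81} (the matching lower bound) --- so there is no in-paper argument for your write-up to track. Your one genuinely useful observation is correct and is exactly what licenses stating the \emph{induced} version from the cited \emph{subgraph} version: if an $n$-vertex tree $T$ embeds as a subgraph into a tree host, the induced subgraph on the image is acyclic, hence has at most $n-1$ edges, hence equals the copy of $T$; so for tree hosts and tree guests, subgraph-universality and induced-universality coincide.

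However, both halves of your sketch contain genuine gaps. For the upper bound, the decomposition claim ``$T$ splits at a centroid into at most two subtrees of size $\le n/2$'' is false (a spider with three legs of $(n-1)/3$ vertices each cannot be grouped into two parts of size $\le n/2$), and, more seriously, the recursion you describe in words --- two recursive copies of $U_{n/2}$ plus a polynomial-size gadget --- is a \emph{sum} recursion $f(n)\le 2f(n/2)+n^{O(1)}$, which yields a polynomial-size host and therefore contradicts the very lower bound you are trying to prove. The correct construction necessarily attaches polynomially many copies of $U_{n/2}$ at each level (one per attachment configuration along the spine), giving $f(n)\approx n\cdot f(n/2)=n^{\Theta(\log n)}=2^{\Theta(\log^2 n)}$; your displayed product recursion accidentally has this form, but it does not match your verbal description, and the embedding argument (simultaneously placing the many subtrees hanging off the centroid into disjoint recursive copies) is exactly the part left unspecified. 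For the lower bound, deferring to \cite{CG83} is legitimate (the paper does the same), but the strategy you propose to ``make explicit'' cannot work: any two $n$-vertex trees embed together into a single tree on $2n$ vertices (take their disjoint union and add one connecting edge), so no family of pairwise ``incompatible'' trees can force more than a linear-size host. The argument of \cite{CGC81} is necessarily global --- an induction using all small trees simultaneously, tracking branching/leaf statistics of the host --- not a pairwise-incompatibility count.
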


However, if we consider only trees of bounded depth, we can significantly decrease the size of the faithful induced-universal graphs.
Indeed, if we denote by $T_{n,k}$ the rooted $n$-ary tree of depth $k$, it is easy to see that for all integers $k$ and $n$, the tree $T_{n,k}$ contains all the rooted trees of depth at most $k$ on at most $n$ vertices. 

The \emph{treedepth} of a graph $G$ is the minimum depth of a rooted forest $F$ such that $G$ is a subgraph of the transitive closure of $F$ (see \cref{sec:td} for more details on this definition and treedepth in general -- we have chosen not to include these in the main body of the paper since these notions are only used once and the proof of \cref{td upper bound} is rather straightforward). The observation above easily implies the following, as we show in \cref{sec:td}.
\begin{corollary}\label{td upper bound}
    For every $k\in \mathbb{N}$, the class of graphs of treedepth at most $k$ has faithful subgraph-universal graphs 
    of order $n\mapsto O(n^{k-1})$.
\end{corollary}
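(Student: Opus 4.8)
The plan is to reduce the statement to a clean fact about rooted trees and then apply it. Recall that a graph $G$ has treedepth at most $k$ if and only if $G$ is a subgraph of the transitive closure (``closure'') of some rooted forest of depth at most $k$; by adding a new root joined to the roots of each component we may as well work with a rooted \emph{tree} of depth at most $k+1$, or equivalently restrict attention to connected graphs and depth $k$. So it suffices to build, for each $n$, a single rooted tree $\widehat{T}_{n,k}$ of depth $k$ and few vertices, whose closure contains the closure of every rooted tree of depth $\le k$ on $\le n$ vertices, as a subgraph. Since taking closures is monotone under the subtree relation (if $S$ is a subtree of $T$ rooted at the same kind of vertex, then the closure of $S$ is an induced subgraph of the closure of $T$ restricted to $V(S)$), it is enough that $\widehat{T}_{n,k}$ contain every rooted tree of depth $\le k$ on $\le n$ vertices as a (topological, root-preserving) subtree — in fact as a subtree in the strong sense that the root maps to the root and children map to children.

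First I would make precise the notion of the ``universal'' rooted tree. For depth $1$ the universal object is a single vertex (any rooted tree on $\le n$ vertices of depth $1$ is a single vertex). For depth $2$, a rooted tree on $\le n$ vertices has at most $n-1$ leaves hanging off the root, so the star $K_{1,n-1}$ is universal and has $n$ vertices. In general one is tempted to take the complete $n$-ary tree $T_{n,k}$ of depth $k$ as in the excerpt, which works but has $\Theta(n^{k})$ vertices — one factor of $n$ too many. The key observation that shaves off a factor of $n$ is a counting/packing argument: a rooted tree on $n$ vertices of depth $\le k$ cannot simultaneously have the root of high degree \emph{and} deep heavy subtrees, because the total number of vertices is only $n$. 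Concretely, I would define $\widehat{T}_{n,k}$ recursively so that its root has roughly $n$ children, but the $i$-th child (for $i$ from $1$ up) is the root of a copy of $\widehat{T}_{\lceil n/i\rceil, k-1}$ (or something in this spirit). The point is that if a rooted tree $S$ on $\le n$ vertices has children subtrees $S_1,\dots,S_m$ with $|S_1|\ge |S_2|\ge\cdots$, then $|S_i|\le n/i$, so $S_i$ embeds into the $i$-th child's subtree by induction; and $m\le n-1$ so we do not run out of children. A clean variant: give the root children $c_1,\dots,c_{n-1}$ where the subtree at $c_i$ is $\widehat{T}_{\lfloor n/i \rfloor + 1,\,k-1}$.

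The size bound then follows by induction. Writing $f_k(n)$ for the number of vertices of $\widehat{T}_{n,k}$, the recursion gives $f_1(n)=1$, $f_2(n)=n$, and $f_k(n) = 1 + \sum_{i=1}^{n-1} f_{k-1}(\lfloor n/i\rfloor+1)$. If $f_{k-1}(n)=O(n^{k-2})$, then $\sum_{i=1}^{n-1} f_{k-1}(n/i) = O\!\big(\sum_{i=1}^{n-1}(n/i)^{k-2}\big) = O\!\big(n^{k-2}\sum_{i\ge 1} i^{-(k-2)}\big)$. For $k\ge 4$ the sum $\sum i^{-(k-2)}$ converges, giving $O(n^{k-2})$ — but that is too good, so I must be careful: the correct accounting is $f_k(n)=1+\sum_{i=1}^{n-1} f_{k-1}(\lceil n/i\rceil)$ with the base case $f_2(n)=n$, which for $k=3$ gives $\sum_{i=1}^{n-1}\lceil n/i\rceil = O(n\log n)$, not $O(n)$. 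To hit exactly $O(n^{k-1})$ one should instead not divide $n$ by $i$ at every level but arrange the branching so that exactly one ``dimension'' of size $\le n$ is consumed per level below the first: i.e. the root has $\le n$ children and \emph{each} child carries a full copy of $\widehat T_{n,k-1}$, except we only need this to depth-$k$ trees on $n$ \emph{total} vertices, so at the bottom levels the relevant parameter is already bounded. The honest statement is: $\widehat T_{n,k}$ has root of degree $n$ with $n$ identical children each equal to $\widehat T_{n,k-1}$; then $f_k(n) = 1+n\,f_{k-1}(n)$, and $f_2(n)=n$ gives $f_k(n)=\Theta(n^{k-1})$. Universality holds because in any depth-$\le k$ tree on $\le n$ vertices the root has $\le n-1 < n$ children and each child's subtree has $\le n$ vertices and depth $\le k-1$, so induction applies directly with no packing needed — the packing refinement is only what would let one push below $n^{k-1}$, which we do not need.

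The main obstacle, and the one step deserving real care, is the reduction from ``graphs of treedepth $\le k$'' to ``rooted trees of depth $\le k$'': one must check that a subgraph of the closure of a forest $F$ is still a subgraph of the closure of the universal tree $\widehat T_{n,k}$ once $F$ embeds into $\widehat T_{n,k}$ as a rooted forest. This is where ``subgraph-universal'' (as opposed to induced-universal) matters: we only need that the edge set of the target graph is contained in the closure's edge set, and the closure operation is monotone — if $F$ is a rooted-subforest of $\widehat T_{n,k}$ then $\mathrm{clos}(F) \subseteq \mathrm{clos}(\widehat T_{n,k})[V(F)]$, and $\widehat T_{n,k}$ itself has treedepth exactly $k$ (its natural rooting is a depth-$k$ forest whose closure contains it), so the family is faithful. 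I would also remark on the off-by-one conventions (whether ``depth $k$'' counts vertices or edges on a root-to-leaf path) and pick the convention so that $K_{1,n-1}$ is the treedepth-$2$ universal graph, matching the $O(n^{2-1})=O(n)$ bound; for $k=1$ the class is just edgeless graphs and the bound $O(n^{0})=O(1)$ is visibly wrong for $n$ isolated vertices, so the statement implicitly takes $U_n$ to have $n$ vertices there (a single vertex per component is impossible), meaning the intended reading is ``$O(n^{\max(k-1,1)})$'' or that the universal graph may have up to $n$ vertices trivially — I would flag this boundary case. All of this is spelled out, as the excerpt says, in \cref{sec:td}; here I have only sketched why the bound is $O(n^{k-1})$ and why the construction is faithful.
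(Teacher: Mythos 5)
Your treatment of connected graphs is, in the end, exactly the paper's proof: the paper takes the rooted $n$-ary tree $T_{n,k}$ of depth $k$, shows it is an elimination tree for every connected $n$-vertex graph of treedepth at most $k$ (\cref{td upper bound2}), and uses $|V(T_{n,k})|=\frac{n^k-1}{n-1}\le 2n^{k-1}$. Note that in the paper's convention depth counts \emph{vertices} on a root-to-leaf path, so $T_{n,k}$ already has $\Theta(n^{k-1})$ vertices; your worry that it is ``a factor $n$ too large'', the abandoned harmonic construction, and your final $\widehat T_{n,k}$ are a detour back to essentially the same object. The genuine gap is the disconnected case, and both routes you offer for it fail. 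Adding a new root joined to the roots of all components produces a tree of depth $k+1$, and the closure of a depth-$(k+1)$ tree contains $K_{k+1}$, so a universal graph built from it has treedepth $k+1$ and faithfulness is lost. Restricting to connected graphs leaves all disconnected graphs of treedepth at most $k$ uncovered, and they are not covered for free by the single tree: every $k$-clique in the closure of a depth-$k$ rooted tree is the vertex set of a root-to-leaf path and hence contains the root, so that closure never contains two disjoint copies of $K_k$; already for $k=2$ and $n=4$, $2K_2$ is not a subgraph of $K_{1,3}$. Your later sentence ``once $F$ embeds into $\widehat T_{n,k}$ as a rooted forest'' presupposes precisely the embedding that does not exist in general.

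This missing step is not a formality, which is why the paper devotes a separate lemma to it (\cref{lem:connected}): using that the class is summable and subtractable, it takes about $n/2^{i-1}$ disjoint copies of the connected-case universal graph $U_{2^i}$ for each dyadic scale $2^i\le n$, and this geometric scaling is what keeps the total order at $O(n^{k-1})$ for $k\ge 3$, whereas the naive repair of taking $n$ disjoint copies of the closure of $\widehat T_{n,k}$ gives only $O(n^k)$. Disconnectedness is also where real boundary effects live: a faithful universal graph for treedepth $2$ is a star forest and must contain, for every $j$, at least $j$ disjoint stars on $\lfloor n/j\rfloor$ vertices, forcing order $\Omega(n\log n)$; so besides $k=1$, which you rightly flag, $k=2$ is a second case where $O(n^{k-1})$ cannot be met for the full class (the paper's dyadic argument yields $O(n\log n)$ there). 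To complete your argument you should either invoke \cref{lem:connected} or prove such a disjoint-union statement yourself, and drop the ``add a new root'' alternative.
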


For a graph $G$ and an integer $t$, we denote by $G^{[t]}$ the graph obtained from $G$ by replacing every vertex $u\in V(G)$ by a stable set $S_u$ on $t$ vertices, and every edge $uv\in E(G)$ by a complete bipartite graph $K_{t,t}$ between $S_u$ and $S_v$.

\smallskip

To turn the subgraph-universal graphs of Corollary \ref{td upper bound} into  induced-universal graphs, we will use the following simple lemma.

\begin{lemma}\label{lem:sub to ind}
Let $G$ be a $d$-degenerate graph. Then there exists a subgraph $U$ of $G^{[2^d]}$ such that every subgraph $H$ of $G$ is contained as an induced subgraph in $U$.
\end{lemma}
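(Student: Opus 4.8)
The plan is to use the standard adjacency labelling scheme for $d$-degenerate graphs, but to realise it geometrically inside the blow-up $G^{[2^d]}$. Fix a degeneracy order $v_1,\dots,v_n$ of $G$, so that each $v_i$ has a set $B(v_i)\subseteq\{v_j:j<i\}$ of back-neighbours with $|B(v_i)|\le d$. The idea is that a subset $X\subseteq B(v_i)$ can be encoded by a single element of the $2^d$-element stable set $S_{v_i}$: since $|B(v_i)|\le d$, there are at most $2^d$ subsets of $B(v_i)$, so we may fix an injection $\varphi_i$ from the power set of $B(v_i)$ into $S_{v_i}$.

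First I would set up the embedding. Let $H$ be a subgraph of $G$; we may assume $V(H)=V(G)=\{v_1,\dots,v_n\}$ (adding isolated vertices to $H$ if needed does not hurt, since an induced subgraph of $U$ on a subset of the copied vertices is still fine — actually more carefully, $H$ is a subgraph of $G$ so $V(H)\subseteq V(G)$, and we just work with the vertices of $H$, keeping their relative order). For each $v_i\in V(H)$, let $N_i = N_H(v_i)\cap B(v_i)$ be the set of neighbours of $v_i$ in $H$ that come earlier in the degeneracy order; this is a subset of $B(v_i)$, so $\varphi_i(N_i)$ is a well-defined vertex of $S_{v_i}\subseteq V(G^{[2^d]})$. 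Define the embedding $\iota\colon V(H)\to V(G^{[2^d]})$ by $\iota(v_i)=\varphi_i(N_i)$. Let $U$ be the subgraph of $G^{[2^d]}$ induced by $\iota(V(H))$ — wait, I should be careful: I want a single $U$ working for all $H$ simultaneously. So instead let $U$ be the subgraph of $G^{[2^d]}$ induced by the union over all $v_i\in V(G)$ of $S_{v_i}$ — that is, $U = G^{[2^d]}$ itself, or rather I take $U$ to be all of $G^{[2^d]}$; then each $H$ embeds as an induced subgraph via $\iota$.

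The key step is to verify that $\iota$ is an induced-subgraph embedding. The map $\iota$ is injective: if $i\ne j$ then $\iota(v_i)\in S_{v_i}$ and $\iota(v_j)\in S_{v_j}$ are in different stable sets, hence distinct. Now take $i<j$ with $v_i,v_j\in V(H)$. In $G^{[2^d]}$, the vertices $\iota(v_i)\in S_{v_i}$ and $\iota(v_j)\in S_{v_j}$ are adjacent if and only if $v_iv_j\in E(G)$ — but since $i<j$, $v_iv_j\in E(G)$ is equivalent to $v_i\in B(v_j)$. I claim $v_iv_j\in E(H)$ iff $\iota(v_i)\iota(v_j)\in E(G^{[2^d]})$. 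If $v_iv_j\in E(H)$, then $v_iv_j\in E(G)$ (as $H\subseteq G$), so $v_i\in B(v_j)$ and the copies are adjacent — giving one direction for free, without even looking at $\varphi$. For the converse I need to rule out a "false" edge: if $v_iv_j\notin E(H)$ I must show $\iota(v_i)\iota(v_j)\notin E(G^{[2^d]})$. Hmm — but if $v_iv_j\in E(G)$ (just not in $H$), the copies $\iota(v_i),\iota(v_j)$ ARE adjacent in $G^{[2^d]}$ regardless of which representatives we picked, because in $G^{[2^d]}$ the edge $v_iv_j$ becomes a complete bipartite graph. So this naive encoding does not work, and that is the main obstacle.

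The fix — and the real content of the standard scheme — is that the blow-up must only connect the chosen representatives. So I would instead define $U$ as a \emph{subgraph} (not induced subgraph) of $G^{[2^d]}$: keep all $n\cdot 2^d$ vertices, but between $S_{v_i}$ and $S_{v_j}$ (with $i<j$, $v_iv_j\in E(G)$) put an edge from $x\in S_{v_i}$ to $y\in S_{v_j}$ precisely when $y = \varphi_j(X)$ for some $X\subseteq B(v_j)$ with $v_i\in X$ and $x$ is \emph{any} vertex of $S_{v_i}$ — i.e. $y$ "declares" $v_i$ as a neighbour. Equivalently, the neighbourhood of $y=\varphi_j(X)\in S_{v_j}$ inside $S_{v_i}$ is all of $S_{v_i}$ if $v_i\in X$ and empty otherwise. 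This $U$ is a subgraph of $G^{[2^d]}$ (we only deleted edges of the complete bipartite graphs). Then for $i<j$: $\iota(v_i)\iota(v_j)\in E(U)$ iff $v_i\in N_j$ iff $v_iv_j\in E(H)$ (using $v_j$'s label $N_j$ to record exactly the $H$-edges to earlier vertices), which is exactly what we want; and non-adjacency of $\iota(v_i),\iota(v_j)$ for $v_iv_j\notin E(H)$ now holds because we deleted that edge. Thus $\iota$ realises $H$ as an induced subgraph of $U$. I expect the write-up to be short once the subgraph $U$ is defined correctly; the only subtle point, which I would state carefully, is that whether $\iota(v_i)\iota(v_j)$ is an edge of $U$ is determined solely by the label of the \emph{later} vertex $v_j$ (via whether $v_i$ lies in the recorded set $N_j\subseteq B(v_j)$), so there is never a conflict between the two endpoints' choices — this asymmetry coming precisely from the degeneracy order is what makes the $2^d$ copies (rather than $2^n$) suffice.
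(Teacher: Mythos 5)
Your final construction is correct and matches the paper's proof essentially exactly: both use a degeneracy order, encode in a vertex of $S_{v_j}$ the chosen subset of $v_j$'s back-neighbours, and make that chosen vertex adjacent to \emph{all} of $S_{v_i}$ precisely when $v_i$ is in the recorded subset, so that adjacency is determined solely by the label of the later vertex. The exploration of the failed naive attempt is superfluous but does not affect correctness.
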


\begin{proof}
As $G$ is $d$-degenerate, there exists an order $v_1,\ldots,v_n$ on the vertices of $G$ such that for any $1\le i \le n$, the \emph{back-degree} $d^-(v_i):=|\{v_j:j<i, v_iv_j\in E(G)\}|$ of $v_i$ is at most $d$. The vertex set of $U$ is defined as follows: For each vertex $v_i$ of $G$, we replace $v_i$ by a stable set $S_i$ whose size is precisely $2^{d^-(v_i)}$ and we index the vertices of $S_i$ by binary vectors of size $d^-(v_i)$. We now define the edge set of $U$. Consider any vertex $v_i$ and denote by $v_{i_1}, \ldots, v_{i_t}$ the neighbours of $v_i$ preceding $v_i$ in the order, with $t=d^-(v_i)\le d$. For any binary vector $(x_1,\ldots,x_t)$ and any $1\le s \le t$, we add edges between the vertex of $S_i$ indexed by $(x_1,\ldots,x_t)$ and all the vertices of $S_{i_s}$ if and only if $x_s=1$.

Consider any subgraph $H$ of $G$. 
Since $V(H)\subseteq V(G)$, let $v_{i_1}, \ldots,v_{i_\ell}$ be the vertices of $H$, with $i_1<\cdots < i_\ell$. We now explain how to embed $H$ as an induced subgraph in $U$. For each $1\le j\le \ell$, we map $v_{i_j}$ to a vertex of $S_{i_j}$ in $U$ by considering the adjacency in $H$ between $v_{i_j}$ and its neighbours from $V(H)$ in $G$ preceding it in the order. 
By definition of the edge set of $U$, there is always a vertex  $v\in S_{i_j}$ such that 
\[\{k: k<j \text{ and } v \text{ is adjacent to } S_{i_k} \text{ in } U\}=\{k: k<j \text{ and } v_{i_j} \text{ is adjacent to } v_{i_k} \text{ in } H\}.\] Therefore $U$ contains $H$ as an induced subgraph.
\end{proof}

Note that if $G$ has treedepth at most $k$, then $G^{[t]}$ and all its subgraphs have treedepth at most $kt$. Moreover, any graph with treedepth at most $k$ is $k$-degenerate. We thus obtain the following as an immediate consequence of Corollary \ref{td upper bound} and Lemma \ref{lem:sub to ind}.

\begin{corollary}\label{cor: td upper bound iu}
For every $k\in \mathbb{N}$, the class of graphs of treedepth at most $k$ has induced-universal graphs with treedepth $k\cdot 2^{k}$ and order $n\mapsto O(2^{k} \cdot n^{k-1})$.
 \end{corollary}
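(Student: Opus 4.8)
The plan is to combine the two results we have just assembled—the faithful subgraph-universal construction of \cref{td upper bound} and the degeneracy-to-induced trick of \cref{lem:sub to ind}—and to keep track of the treedepth along the way.

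\medskip

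First I would invoke \cref{td upper bound} to obtain, for each $n$, a graph $G=G_n$ of treedepth at most $k$ on $O(n^{k-1})$ vertices that contains every $n$-vertex graph of treedepth at most $k$ as a subgraph. Next I would feed $G$ into \cref{lem:sub to ind}. To do so I need two things: that $G$ is $d$-degenerate for a suitable $d$, and control on the treedepth of $G^{[2^d]}$ and its subgraphs. For the first point, recall the remark in the preliminaries that a graph of treedepth at most $k$ is $k$-degenerate; hence $G$ is $k$-degenerate and we may take $d=k$. \cref{lem:sub to ind} then produces a subgraph $U=U_n$ of $G^{[2^k]}$ such that every subgraph $H$ of $G$—in particular every $n$-vertex graph of treedepth at most $k$—embeds into $U$ as an \emph{induced} subgraph.

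\medskip

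It then remains to bound the order and the treedepth of $U$. For the order: $G^{[2^k]}$ has exactly $2^k|V(G)|=O(2^k n^{k-1})$ vertices, and $U$ is a subgraph of it, so $U$ has $O(2^k n^{k-1})$ vertices, matching the claimed bound. For the treedepth: we use the observation stated just before the corollary, namely that if $G$ has treedepth at most $k$ then $G^{[t]}$ and all its subgraphs have treedepth at most $kt$. (This is because blowing up each vertex into a stable set of size $t$ can be modelled by replacing each node of an elimination forest of $G$ by a path of $t$ copies, multiplying the depth by $t$; passing to a subgraph only decreases treedepth.) Applying this with $t=2^k$ gives that $G^{[2^k]}$ and hence its subgraph $U$ has treedepth at most $k\cdot 2^k$. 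This yields exactly the two assertions of \cref{cor: td upper bound iu}.

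\medskip

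I do not expect a serious obstacle here: every ingredient is already in place, and the proof is essentially a two-line chaining of \cref{td upper bound} and \cref{lem:sub to ind} together with the elementary treedepth bookkeeping for blow-ups. The only point requiring a moment of care is verifying the treedepth-of-blow-up estimate $\td(G^{[t]})\le t\cdot\td(G)$ (and its inheritance by subgraphs), which is why the paper isolates it as a separate remark; but since the corollary is allowed to cite it, there is nothing left to prove.
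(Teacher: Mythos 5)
Your proposal is correct and follows exactly the paper's argument: apply \cref{td upper bound} to get a faithful subgraph-universal graph of treedepth at most $k$ on $O(n^{k-1})$ vertices, note that treedepth $\le k$ implies $k$-degeneracy so \cref{lem:sub to ind} applies with $d=k$, and then use the stated fact that blowing up a graph of treedepth $k$ by stable sets of size $t$ yields treedepth at most $kt$ (inherited by all subgraphs). The bookkeeping of order and treedepth matches the paper's.
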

In fact, we not only give a faithful universal graph for the class of graphs of bounded treedepth: in addition, our universal graph has a decomposition which is universal for all decompositions of graphs of bounded treedepth. 
We now prove a similar result for graphs of bounded pathwidth, except that the universal graphs we obtain are only approximately faithful (they have pathwidth at most a function of $k$, if the original graphs have pathwidth at most $k$). For an integer $k$, let $\mathcal{G}_{\mathrm{pw}\le k}$ denote the class of all graphs of pathwidth at most $k$.

\begin{theorem}\label{pw upper bound}
    For every positive integer $k$, the class $\mathcal{G}_{\mathrm{pw}\le k}$ has subgraph-universal graphs with pathwidth $k^2+k-1$ and  order  $n\mapsto  n^{k+1}$.
\end{theorem}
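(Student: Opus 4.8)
The plan is to construct $U_n$ by an explicit (inductive) construction that simultaneously produces a path-decomposition of $U_n$ of width $k^2+k-1$, and then to verify universality by "following" this decomposition. I would first record a normal form: every $n$-vertex graph $G$ with $\mathrm{pw}(G)\le k$ has a path-decomposition $(B_1,\dots,B_n)$ together with an enumeration $v_1,\dots,v_n$ of $V(G)$ such that $v_j\in B_j$, $B_j\setminus\{v_j\}\subseteq B_{j-1}$ (passing from $B_{j-1}$ to $B_j$ forgets a set of vertices and then introduces $v_j$), each $|B_j|\le k+1$, and the set of bags containing a fixed vertex is an interval of $\{1,\dots,n\}$. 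In particular each $v_j$ has at most $k$ neighbours among $v_1,\dots,v_{j-1}$ (its \emph{back-neighbours}), all lying in $B_j$. Since "ever in a common bag" is the edge relation of an interval graph with clique number $\le k+1$, hence perfect, I can also fix a colouring $c\colon V(G)\to\{1,\dots,k+1\}$ ("slots") in which vertices that share a bag get distinct colours. This reduction is routine.

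Next I would take the vertices of $U_n$ to correspond to the possible "introduction events": a step $j\in\{1,\dots,n\}$, a slot $s\in\{1,\dots,k+1\}$ for the vertex introduced at that step, a forget time $f\in\{j,\dots,n\}$, and the data describing the backward attachment — which of the other $\le k$ slots the new vertex is adjacent to, and which earlier events currently occupy those slots. Over all of $\{1,\dots,n\}$ this is at most $n^{k+1}$ events, giving $|V(U_n)|\le n^{k+1}$, with adjacency declared so that an introduction event is joined exactly to the earlier events it attaches to. The heart of the matter is to equip $U_n$ with a path-decomposition of width $k^2+k-1$: its timeline refines $\{1,\dots,n\}$ into long runs of bags (the events of a given step are handled a few at a time, exactly as the leaves hanging off one spine vertex are processed one after another in the $k=1$ case), and the construction maintains, for each of the $k+1$ slots, a window of at most $k$ currently-alive candidate occupants, forgetting the oldest one whenever a new candidate enters. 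This keeps every bag of size at most $(k+1)\cdot k=k^2+k$, i.e. $\mathrm{pw}(U_n)\le k^2+k-1$; for $k=1$ this recovers the familiar "universal caterpillar", a spine $p_1\dots p_n$ carrying enough leaves on each $p_i$.

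Finally, given $G$ with the normal-form data, I would embed it by induction on $j$, following the two decompositions in parallel with the invariant that the image of the current bag of $G$ lies inside the current bag of $U_n$, each $v$ mapped into the slot-$c(v)$ window. When $v_j$ is introduced into slot $c(v_j)$ with back-neighbours occupying a set $L$ of slots, I route $v_j$ to the event created for exactly this configuration; by construction it is adjacent to the images of all current slot-$t$ occupants for $t\in L$, hence to the images of all back-neighbours of $v_j$, and the invariant is restored. Distinct events give distinct vertices, so the map is injective, and every edge $v_iv_j$ ($i<j$) is realised as a back-edge of $v_j$; thus $G\subseteq U_n$. The main obstacle is entirely in the middle step: the decomposition of $U_n$ must spread the (up to) $n^{k+1}$ events over a long enough timeline that only $O(k^2)$ are simultaneously alive, while still presenting consecutively every slot-configuration some input graph forces — showing that a window of only $k$ candidates per slot suffices (this is where the interval structure and the "$\le k$ back-neighbours" bound are used) without inflating the vertex count past $n^{k+1}$ is the delicate part, whereas the normal form and the embedding check are bookkeeping.
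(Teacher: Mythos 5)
There is a genuine gap, and it sits exactly where you flag it: the ``window of at most $k$ currently-alive candidate occupants per slot'' is not a fact you have established, and without it your construction is not even well defined. For the embedding to realise every back-edge, an introduction event must be adjacent in $U_n$ to the \emph{specific} events its back-neighbours were mapped to. If an event names its attachment targets by their full identity (step, slot, forget time, their own attachment data, \dots), the description becomes recursive and the vertex count is no longer $n^{k+1}$; if instead it names them only by slot and a bounded window index, then the edge set of $U_n$ is only well defined once you know that at every point of the timeline each slot has at most $k$ live candidates that any guest graph could be using --- and that is false as stated, or at least entirely unproven. In a guest graph of pathwidth $k\ge 2$, the occupant of a slot at step $j$ can be a vertex introduced at any earlier step $j'\le j$ and forgotten only much later, and different guest graphs force different (and mutually incompatible) choices of $(j',f)$ for the same slot at the same time; the universal graph must simultaneously offer all of these as attachment targets, so the number of simultaneously relevant candidates per slot is governed by $n$, not by $k$. (For $k=1$ the windowing claim happens to hold because pathwidth-$1$ graphs are caterpillar forests, which is why your picture recovers the universal caterpillar; nothing in your argument shows it survives to $k\ge 2$.) So the vertex count, the pathwidth bound, and the universality argument all hinge on a normalisation of guest path-decompositions that you have not supplied, and your closing sentence concedes this.

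For comparison, the paper does not attempt a direct ``event/labelling'' construction at all: it recurses on $k$. The graph $U_{k,n}$ consists of $n-1$ cliques $S_1,\dots,S_{n-1}$ of $k$ new vertices arranged path-wise, with a copy of the universal graph $U_{k-1,n-1}$ for pathwidth $k-1$ glued completely to $S_i\cup S_{i+1}$ between consecutive cliques; the width bound $2k+\bigl((k-1)^2+(k-1)-1\bigr)=k^2+k-1$ and the order bound $n^{k+1}$ then follow mechanically from the recursion. The universality step is carried by a combinatorial observation with no analogue in your proposal: in a reduced path-decomposition of the guest graph one can choose a maximal family of pairwise \emph{disjoint} adhesions $A_1,\dots,A_r$; maximality forces every bag to meet $\bigcup_j A_j$, so deleting $\bigcup_j A_j$ drops the pathwidth to $k-1$, the $A_i$ are mapped into the $S_i$, and each remaining component (whose neighbourhood lies in some $A_i\cup A_{i+1}$) is embedded into $U'_i$ by induction. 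If you want to salvage your approach, you would need to prove the missing normalisation lemma (that guest decompositions can always be rewritten so that only $O(k)$ candidates per slot are ever needed), and it is not clear such a statement is true; the disjoint-adhesion recursion is the idea that replaces it.
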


\begin{proof}
    We construct by induction on $k$
    a graph $U_{k,n}$ which contains every $n$-vertex graph of pathwidth at most $k$ as subgraph.

    First suppose $k=1$. The graphs with pathwidth $1$ are exactly the graphs obtained by adding pendant vertices to a linear forest. Thus, we can take $U_{n,1}$ to be the graph obtained by adding $n-1$ pendant vertices to each vertex of a path on $n$ vertices. This graph has indeed $n^2$ vertices, pathwidth $1$ and contains all the graphs with pathwidth $1$ as subgraph.

    Now suppose $ k\geq 2$ and
    that $(U_{k-1,n})_{n \in \mathbb{N}}$ has been constructed.
    Let $n \in \mathbb{N}$.
    Observe that every $n$-vertex graph of pathwidth at most $k$
    is a subgraph of a connected $n$-vertex graph of pathwidth $k$. Therefore, it is enough to construct subgraph-universal graphs for connected graphs of pathwidth at most $k$.

    If $n \leq k^2+k$, then it is enough to take $U_{k,n} = K_{n}$, which has pathwidth at most $k^2+k-1$.
    Now suppose $n \geq k^2+k+1$.
    Let $S_1, \dots, S_{n-1}$ be $n-1$ pairwise disjoint sets of $k$ new vertices.
    Let $U'_1,\dots U'_{n-2}$ be $n-2$ pairwise disjoint copies of $U_{k-1,n-1}$.
    Finally, let $U_{k,n}$ be the graph constructed from the disjoint union of $U'_1,\dots U'_{n-2}$ and $n-1$ cliques, with respective vertex sets $S_1, \dots, S_{n-1}$, by adding, for each $i\in [n-2]$, all the possible edges between $U'_i$, $S_i$ and $S_{i+1}$ (see \Cref{fig:univ-pw}). More formally, $U_{k,n}$ is defined by 
    \begin{align*}
        V(U_{k,n}) &= \bigcup_{i=1}^{n-1} S_i \cup \bigcup_{i =1} ^ {n-2} V(U'_i) \\
        E(U_{k,n}) &=
            \{ss' : i,j \in [n-1], |i-j| \leq 1, s \in S_i, s' \in S_j\} \\
            &\hspace{7.5mm}\cup \{us : i \in [n-2], u \in V(U'_i), s \in S_i \cup S_{i+1}\} \\
            &\hspace{7.5mm}\cup \{uv : i \in [n-2], uv \in E(U'_i)\}.
    \end{align*}
    \begin{figure}[htb]
        \centering
        \includegraphics[scale=1]{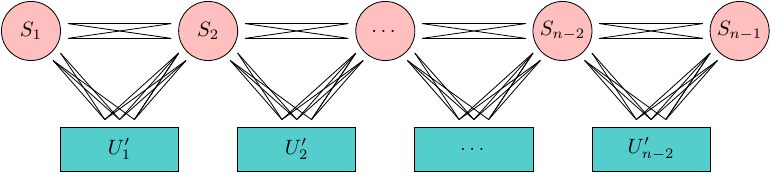}
        \caption{The construction of the subgraph-universal graph $U_{n,k}$ for $\mathcal{G}_{\pw \leq k}$.}
        \label{fig:univ-pw}
    \end{figure}
    
    We now show that $\pw(U_{k,n}) \leq k^2+k-1$.
    For every $i \in [n-1]$,
    let $(B_{i,1}, \dots, B_{i,\ell})$ be a path-decomposition
    of $U'_i$ of width at most $(k-1)^2+(k-1)-1$.
    Then, for every $i \in [n-2]$, for every $j \in [\ell]$,
    let $\beta_{(i-1)\ell + j} = S_{i} \cup S_{i+1} \cup B_{i,j}$.
    The sequence $(\beta_i : i \in [(n-2)\ell])$
    is a path-decomposition of $U_{k,n}$ of width at most
    $2k + (k-1)^2 + (k-1) - 1 = k^2+k-1$.
    This proves that $\pw(U_{k,n}) \leq k^2+k-1$.

    Consider a connected $n$-vertex graph $G$
    admitting a path-decomposition $(B_i : i \in [q])$
    of width at most $k$.
    We will show that $G$ is a subgraph of $U_{k,n}$.

    By taking a reduced path-decomposition,  we may suppose $1 \leq |B_i \cap B_{i+1}| \leq k$, and $B_{i+1} \setminus B_{i} \neq \emptyset$, for every $i \in [q-1]$ and by \cref{lem:2-trees}, $q\leq n-k\leq n-1$.
    Assume first that $q=1$, and consider some vertex $u \in V(G)$.
    Observe that $\pw(G-u) \leq \pw(G)-1 \leq k-1$,
    and so $G-u$ is a subgraph of $U'_1$ by induction.
    Therefore, extending this embedding by mapping $u$ to a vertex in $S_1$,
    we deduce that $G$ is a subgraph of $U_{k,n}$.
    We can now assume that $2 \leq q \leq n-1$.

    Let $\{A_1, \dots, A_r\}$ be a maximal
    family of pairwise disjoint members of $\{B_i \cap B_{i+1} : i \in [q-1]\}$.
    Suppose $A_1, \dots, A_r$ appear in this order in the
    path-decomposition.
    For every $i \in [q-1]$, by maximality of the family $A_1, \dots, A_r$, and because $B_i \cap B_{i+1}$ is nonempty, $B_i \cap B_{i+1}$ intersects $\bigcup_{j=1}^r A_j$.
    Therefore, for every $i \in [q]$, $|B_i \setminus \bigcup_{j=1}^r A_j| \leq |B_i|-1 \leq k$. 
    We deduce that the path-decomposition
    $(B_i \setminus \bigcup_{j=1}^r A_j : i \in [q])$
    of $G - \bigcup_{j=1}^r A_j$ has width at most $k-1$,
    and so $\pw(G - \bigcup_{j=1}^r A_j) \leq k-1$.
    If $r=1$, then 
    map the vertices in $A_1$ to vertices in $S_1$,
    and vertices in $G-A_1$ as a subgraph of $U'_1$.
    Now suppose $r \geq 2$.
    Since the $(A_i : i \in [r])$ are separators in a path-decomposition,
    for every component $C$ of $G - \bigcup_{i=1}^r A_i$,
    there exists $i\in [r-1]$
    such that $N_G(V(C)) \subseteq A_i \cup A_{i+1}$.
    As a consequence,
    there is a partition $V_1, \dots, V_{r-1}$ of $V(G) \setminus \bigcup_{i=1}^r A_i$ with some potentially empty parts,
    such that, for every $i \in [r-1]$,
    $N_G(V_i) \subseteq A_i \cup A_{i+1}$ (note that each part $V_i$ is does not necessarily induce a connected subgraph in $G$).
    For every $i \in [r-1]$,
    since $\pw(G[V_i]) \leq \pw(G - \bigcup_{i=1}^r A_i) \leq k-1$, 
    $G[V_i]$ can be embedded as a subgraph of $U'_i$.
    Combining these embeddings,
    and mapping vertices in $A_i$ to vertices in $S_i$
    for every $i \in [r]$,
    we conclude that $G$ is a subgraph of $U_{k,n}$.
\end{proof}

Note that if $G$ has pathwidth (resp.\ treewidth) at most $k$, then $G^{[t]}$ and all its subgraphs have pathwidth (resp.\ treewidth) at most $kt$. Moreover, any graph with treewidth or pathwidth at most $k$ is $k$-degenerate. We thus obtain the following as an immediate consequence of Theorem \ref{pw upper bound} and Lemma \ref{lem:sub to ind}.

\begin{corollary}\label{cor: pw upper bound}
For every $k\in \mathbb{N}$, the class $\mathcal{G}_{\mathrm{pw}\le k}$ has induced-universal graphs with pathwidth $(k^2+k-1)2^{k^2+k-1}=2^{O(k^2)}$ and order $n\mapsto 2^{k^2+k-1} n^{k+1}$.
\end{corollary}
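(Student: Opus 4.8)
The plan is to combine the subgraph-universal construction of Theorem~\ref{pw upper bound} with the ``degeneracy-to-induced'' construction of Lemma~\ref{lem:sub to ind}, applied to the subgraph-universal graph itself.

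First, fix $k$ and $n$, and let $U=U_{k,n}$ be the subgraph-universal graph for $\mathcal{G}_{\mathrm{pw}\le k}$ provided by Theorem~\ref{pw upper bound}: it has pathwidth at most $d:=k^2+k-1$, it has at most $n^{k+1}$ vertices, and it contains every $n$-vertex graph of pathwidth at most $k$ as a subgraph. Since every graph of pathwidth at most $d$ is $d$-degenerate (as recalled in Section~\ref{sec:prel}), $U$ is $d$-degenerate, so Lemma~\ref{lem:sub to ind} applies with this graph: there is a subgraph $U'$ of $U^{[2^d]}$ such that every subgraph of $U$ embeds as an induced subgraph of $U'$. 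In particular, every $n$-vertex graph of pathwidth at most $k$ is a subgraph of $U$ and hence an induced subgraph of $U'$, so the family $(U')$ (indexed by $n$) is induced-universal for $\mathcal{G}_{\mathrm{pw}\le k}$.

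It remains to bound the size and pathwidth of $U'$. For the order: $U'$ is a subgraph of $U^{[2^d]}$, and the blow-up operation multiplies the number of vertices by $2^d$, so $|V(U')|\le 2^d\,|V(U)|\le 2^{k^2+k-1}\,n^{k+1}$. For the pathwidth: using the observation recorded just before the statement, that blowing up each vertex of a graph of pathwidth at most $p$ into a stable set of size $t$ yields a graph of pathwidth at most $pt$, we get $\mathrm{pw}(U^{[2^d]})\le d\cdot 2^d=(k^2+k-1)2^{k^2+k-1}$; since $U'$ is a subgraph of $U^{[2^d]}$ and pathwidth is monotone under subgraphs, $\mathrm{pw}(U')\le (k^2+k-1)2^{k^2+k-1}=2^{O(k^2)}$, as claimed.

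There is essentially no serious obstacle: the statement is a direct corollary, and the only points requiring a line of justification are that pathwidth-$d$ graphs are $d$-degenerate (so that Lemma~\ref{lem:sub to ind} is applicable to $U$) and that the vertex blow-up $(-)^{[t]}$ increases pathwidth by at most a factor $t$; both are elementary and already invoked elsewhere in the paper. The one place to take minor care is that the degeneracy/blow-up parameter fed into Lemma~\ref{lem:sub to ind} is the pathwidth bound $k^2+k-1$ of the universal graph $U$, not the pathwidth bound $k$ of the original class, since the factor $2^d$ in both the order and the pathwidth of $U'$ depends on it.
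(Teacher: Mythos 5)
Your proposal is correct and follows exactly the paper's route: apply Lemma~\ref{lem:sub to ind} to the subgraph-universal graph $U_{k,n}$ of Theorem~\ref{pw upper bound}, using that its pathwidth bound $k^2+k-1$ gives the degeneracy parameter, and that the blow-up $(-)^{[t]}$ multiplies both order and pathwidth by at most $t=2^{k^2+k-1}$. Your closing caveat (that the parameter fed into the lemma is the pathwidth of $U$, not of the original class) is precisely the point the paper relies on, so there is nothing to add.
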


We remark that the bounds on the order of the universal graphs in Theorem \ref{pw upper bound} and Corollary \ref{cor: pw upper bound} can be improved by a $\Theta(\log n)$ factor by starting the induction with $|U_{n,1}|=O(n^2/\log n)$ using \cite{CGS81}. As the improvement is minor, we omit the details. 

\medskip

We now focus on graphs of bounded treewidth.
We will need the following lemma.
\begin{lemma}[\cite{Bodlaender1998,ElberfeldJakobyTantau10}]
\label{lem:log_depth}
    There is a constant $c>0$ such that every $n$-vertex graph with treewidth $k$ admits a tree-decomposition of width at most $3k-1$ whose underlying tree is binary and has depth at most $c\log n$.
\end{lemma}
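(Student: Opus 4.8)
The plan is to prove the classical balanced-tree-decomposition lemma: starting from an arbitrary width-$k$ tree-decomposition of $G$, I would reorganise it into a binary one of logarithmic depth while paying only a constant factor in the width. Fix a reduced tree-decomposition $(B_t:t\in V(T))$ of $G$ of width $k$; by \cref{lem:2-trees} we may assume $|V(T)|\le n$, and reducedness forces every adhesion $B_s\cap B_t$ to have size at most $k$. The two facts I would use repeatedly are: (a) every adhesion of the decomposition is a separator of $G$; and (b) every tree with $m$ nodes has a \emph{centroid}, i.e.\ a node $c$ such that every component of $T-c$ has at most $m/2$ nodes (equivalently, an edge whose removal leaves two parts of at most $2m/3$ nodes each).

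The construction is recursive and, crucially, runs on \emph{subtrees of $T$} rather than on arbitrary vertex subsets of $G$: this is what prevents the width from blowing up. The statement I would prove by induction on $|V(T')|$ is: if $T'\subseteq T$ is a subtree joined to $V(T)\setminus V(T')$ by a bounded number of tree-edges, with associated adhesions $\sigma_1,\dots,\sigma_j$ (each of size $\le k$), then $G\big[\bigcup_{t\in V(T')}B_t\big]$ has a binary tree-decomposition of width $O(k)$ and depth $O(\log|V(T')|)$ whose root bag contains $\sigma_1\cup\dots\cup\sigma_j$. In the base case $|V(T')|=1$ we output the single bag $B_t$. Otherwise I would pick a centroid $c$ of $T'$, which splits $T'$ into subtrees $F_1,\dots,F_d$, each of size at most $|V(T')|/2$ and each joined to the rest through the edge to $c$ (an adhesion contained in $B_c$) plus at most one inherited adhesion; recursing on each $F_i$ and hanging the results below a root bag $B_c\cup\sigma_1\cup\dots\cup\sigma_j$ — and, if $d$ is large, interposing a binary fan-out of $O(\log d)$ copies of this root bag to keep the tree binary — gives the decomposition. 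To keep the number of boundary adhesions $j$ bounded along the whole recursion, I would choose $c$ not as a plain centroid but as a centroid for a weighting that also approximately balances the few boundary-attachment nodes, so that $j$ decreases whenever it is large; this costs only an extra $O(\log n)$ in the depth.

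Correctness is checked against the two tree-decomposition axioms. Every edge of $G\big[\bigcup_{t\in V(T')}B_t\big]$ lies in some bag $B_t$ with $t\in V(T')$, hence in one of the $F_i$'s (the bag $B_c$ separates the $F_i$'s, so there are no cross-edges) or inside $B_c$, so it is covered. For the connectivity axiom, a vertex appearing both in the root bag and inside the decomposition built for $F_i$ must lie in the adhesion $B_c\cap B_{c_i}$ (the standard path argument in $T$: a vertex in $B_c$ and in some bag of $F_i$ lies in all bags on the connecting path), and this adhesion sits in both the root bag and the root bag of $F_i$'s decomposition by induction, so the subtrees of bags containing that vertex glue up into a subtree; the inherited adhesions $\sigma_1,\dots,\sigma_j$ are handled the same way. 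The depth bound follows because each recursive call halves $|V(T')|$ (so $O(\log n)$ levels) while the fan-out gadgets contribute at most $\sum_i\log d_i\le\log(\#\text{leaves})\le\log n$ in total and the balancing phases add another $O(\log n)$; the width bound follows because every bag produced is a union of a bounded number of adhesions (size $\le k$) and at most one original bag.

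The main obstacle is the width bound, and more precisely squeezing the constant down to $3k-1$. A naive recursion that splits off a balanced separator of $G$ and recurses on the two sides lets the "remembered boundary" grow by roughly $k$ at every level, giving width $\Omega(k\log n)$; the fix is exactly the subtree/bounded-adhesion invariant, which caps the boundary at $O(k)$ vertices forever. Getting the clean constant $3k-1$ on top of that requires (i) starting from a tree-decomposition in which all bags have size exactly $k+1$ and all adhesions size exactly $k$, obtained by completing $G$ to a partial $k$-tree, and (ii) using the path condition in $T$ to force enough overlap among the at most three relevant sets (two incoming adhesions and one separating set) that their union has at most $3k$ vertices. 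This bookkeeping is what is carried out in \cite{Bodlaender1998,ElberfeldJakobyTantau10}.
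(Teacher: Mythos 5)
The paper does not prove \Cref{lem:log_depth}; it cites it as a black-box result from \cite{Bodlaender1998,ElberfeldJakobyTantau10}, so there is no in-paper proof to compare against. Your sketch is a correct reconstruction of the standard argument from those references: recurse on subtrees of the input decomposition tree $T$ (not on arbitrary vertex subsets of $G$), split at a (weighted) centroid, and maintain a bounded set of boundary adhesions so the bags stay $O(k)$ rather than growing by $k$ per level. You correctly identify the two genuinely delicate points and defer them to the cited papers: (i) keeping the number $j$ of inherited adhesions bounded requires the centroid to be chosen against a weighting that also balances attachment nodes, and (ii) the exact constant $3k-1$ needs the decomposition to be normalised (bags of size exactly $k+1$, adhesions of size exactly $k$) together with overlap accounting among the at most two inherited adhesions and the centroid bag. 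Note that as written, your root bag $B_c\cup\sigma_1\cup\dots\cup\sigma_j$ has size up to $(k{+}1)+jk$, so even with $j\le 2$ you would get width $3k$, not $3k-1$; this is exactly the gap your final paragraph acknowledges and attributes to the references, so the outline is sound but does not by itself recover the stated constant.
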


It turns out that it is easy to provide ``universal tree-decompositions'' for tree-decompositions of  diameter $O(\log n)$. When applying the lemma above, we unfortunately do increase the width of such a decomposition, but we note that already for trees some increase in width is necessary.  For an integer $k$, let $\mathcal{G}_{\mathrm{tw}\le k}$ denote the class of all graphs of treewidth at most $k$.
\begin{theorem}\label{tw upper bound}
    For every $k\geq 1$, the class $\mathcal{G}_{\mathrm{tw}\le k}$ has induced-universal graphs with treewidth $3k-1$ and order $n\mapsto n^{O_k(1)}$. 
\end{theorem}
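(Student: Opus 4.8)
The plan is to build, for each $k$, a single rooted ``host'' tree-decomposition whose shape is universal for all width-$(3k-1)$, binary, depth-$\le c\log n$ tree-decompositions, and then to realise the corresponding host graph as our induced-universal graph via the construction $G\mapsto G^{[t]}$ together with Lemma~\ref{lem:sub to ind}. First I would normalise: by Lemma~\ref{lem:log_depth}, every $n$-vertex graph $G$ of treewidth $k$ has a tree-decomposition $(B_t : t\in V(T))$ of width at most $3k-1$ with $T$ binary, rooted, of depth at most $d:=c\log n$. I may additionally assume each bag has size exactly $3k$ (pad with copies of a vertex already present, or simply allow smaller bags), and that each non-root bag shares all but one vertex with its parent is \emph{not} needed — what I really want is just a uniform bound on the adhesion structure, which here is automatic since all bags have size $\le 3k$.

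Next I construct the host. Let $K=3k$. I build a rooted tree $\widehat T_d$ of depth $d$ in which every node has a bounded number of children, and attach to each node a set of $K$ ``slots''; the number of slots and children will be chosen so that the node can host any bag of $G$ together with the two outgoing adhesions to its (at most two) children. Concretely, a node of $\widehat T_d$ should be able to play the role of a node $t$ of $T$ for \emph{any} choice of which of its $K$ slots form the adhesion up to its parent and for \emph{any} partition of the remaining slots among its children — so the children of a host node are indexed by the (constantly many, since $K=O_k(1)$) pairs (subset of slots used as the child's adhesion, tag). This makes $\widehat T_d$ have degree $O_k(1)$ and depth $O(\log n)$, hence $n^{O_k(1)}$ nodes and $K\cdot n^{O_k(1)} = n^{O_k(1)}$ slots. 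Now let $W$ be the graph on the slot set obtained by making, for each host node, its $K$ slots a clique, and making each child-adhesion slot-set equal (identify slots) to the matching slots in the parent — i.e.\ $W$ is the $K$-tree-like graph whose tree-decomposition is exactly $\widehat T_d$ with bags the slot sets. By construction $W$ has treewidth at most $K-1 = 3k-1$, and for every $n$-vertex $G$ of treewidth $k$, $G$ embeds as a \emph{subgraph} of $W$: route the decomposition $(B_t)$ into $\widehat T_d$ node by node from the root, at each step choosing a host node whose free (non-parent-adhesion) slots can accommodate $B_t$ and whose children realise the outgoing adhesions of $t$; the consistency condition of tree-decompositions guarantees this is possible, and two vertices adjacent in $G$ share a bag, hence land in a common clique of $W$.

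To upgrade from subgraph- to induced-universal while controlling treewidth, I apply the blow-up trick already used in the excerpt: $W$ has treewidth $3k-1$, so it is $(3k-1)$-degenerate, and by Lemma~\ref{lem:sub to ind} there is a subgraph $U$ of $W^{[2^{3k-1}]}$ that contains every subgraph of $W$ — in particular every $n$-vertex graph of treewidth $k$ — as an \emph{induced} subgraph. Since blowing up a graph of treewidth $w$ by a constant factor $t$ yields treewidth at most $(w+1)t-1 = O_k(1)$, this would only give treewidth $O_k(1)$, not exactly $3k-1$; to get the sharp bound $3k-1$ I instead observe that $W$ is itself (essentially) a $(3k-1)$-tree, so I can apply Lemma~\ref{lem:sub to ind} more carefully, or simply note that it suffices to replace each slot by a \emph{bounded} independent set \emph{while keeping the host tree-decomposition}: the bags of $\widehat T_d$ blown up pointwise still form a tree-decomposition, and the slot-cliques never need to exceed size $3k$ because at the embedding stage each original vertex of $G$ uses exactly one blown-up copy per slot — so the realised subgraph $U$ inherits a tree-decomposition of width $3k-1$ directly from $\widehat T_d$ (each bag being the union of the at-most-$3k$ \emph{used} copies). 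The order is $n^{O_k(1)}$ times a constant $2^{O(k)}$, i.e.\ $n^{O_k(1)}$. The two named corollaries follow by taking $k=1$ (trees, treewidth $1$, host treewidth $2$, so $K_4$-minor-free) and $k=3$ (giving treewidth $8$, $K_9$-minor-free — wait, the claimed consequence is $k=4$, $t=7$, so one takes the fact that $K_4$-minor-free $=$ treewidth $2$, $k=2$, host treewidth $5$? no: $3k-1$ with $k=2$ is $5$, $K_6$-minor-free) — I would re-derive these numerical specialisations at the end.

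The main obstacle I anticipate is the bookkeeping in the second step: making precise what a host node's ``slots and children indexed by adhesion-patterns'' are, and proving that the greedy top-down routing of an arbitrary width-$(3k-1)$ depth-$d$ tree-decomposition into $\widehat T_d$ always succeeds — one must check that the branching factor of $\widehat T_d$ (number of allowed child-patterns) is genuinely a function of $k$ only, and that identifying parent/child adhesion slots does not accidentally force the treewidth of $W$ above $3k-1$ or destroy the induced-subgraph property after the blow-up. Keeping the width at exactly $3k-1$ rather than $O_k(1)$ is the only delicate quantitative point; everything else is a routine combination of Lemma~\ref{lem:log_depth}, Lemma~\ref{lem:sub to ind}, and Lemma~\ref{lem:2-trees}-style counting to bound $|\widehat T_d| = n^{O_k(1)}$.
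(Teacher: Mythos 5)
Your plan for the \emph{subgraph}-universal host is essentially the paper's: take the log-depth, width-$(3k-1)$ tree-decomposition from Lemma~\ref{lem:log_depth}, build a bounded-degree, depth-$O(\log n)$ universal rooted tree-decomposition $\widehat T_d$ with $3k$-element bags, and route the decomposition of $G$ into it greedily from the root. That part is fine and matches the paper.

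The gap is in the upgrade from subgraph- to induced-universal. You first propose applying Lemma~\ref{lem:sub to ind} to the host $W$; as you yourself notice, this forces you to blow each vertex up into up to $2^{3k-1}$ copies, so the tree-decomposition inherited from $\widehat T_d$ now has bags of size up to $3k\cdot 2^{3k-1}$ — treewidth $2^{O(k)}$, not $3k-1$. Your proposed repair does not close this: the claim that ``each original vertex of $G$ uses exactly one blown-up copy per slot, so $U$ inherits a tree-decomposition of width $3k-1$'' confuses a single embedding of one $G$ with the fixed graph $U$. The universal graph $U$ must simultaneously contain \emph{all} $n$-vertex graphs of treewidth $k$ as induced subgraphs, so it genuinely needs many of the blow-up copies present at once, and those sit together in the blown-up bag; restricting a tree-decomposition of $W^{[2^{3k-1}]}$ to $V(U)$ does not shrink the bags back to size $3k$. (Indeed the paper explicitly remarks that invoking Lemma~\ref{lem:sub to ind} here would make the treewidth exponential in $k$.) The correct move, which is what the paper does, is to avoid the blow-up altogether and instead build induced-universality directly into the host: at each level, for each bag $B$, each adhesion $S\subseteq B$, and each of the $2^{O(k^2)}$ graphs $H$ on $3k$ vertices extending $U[S]$, create a fresh child bag of size $3k$ on which $U$ induces exactly $H$. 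Bags stay of size $3k$ (so width $3k-1$), branching stays $O_k(1)$, depth stays $O(\log n)$, and the same greedy top-down routing now respects the actual induced structure of each bag of $G$. Without this change your argument proves treewidth $O_k(1)$, which is weaker than the stated $3k-1$ and would, for instance, lose the $K_7$-minor-free consequence for $k=2$. (A small aside: the numerical specialisations you left to re-derive are $k=1\Rightarrow$ treewidth $2$, $K_4$-minor-free, and $k=2\Rightarrow$ treewidth $5$, $K_7$-minor-free since $K_7$ has treewidth $6$.)
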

\begin{proof}
    We first construct a graph $U_{n,k}$ that contains all $n$-vertex graphs in $\mathcal{G}_{\mathrm{tw}\le k}$ as \textit{subgraphs}, and then describe how to adjust it so it contains all such graphs as \textit{induced subgraphs}. (Note that we could also use Lemma \ref{lem:sub to ind} for this but doing so would make the treewidth of the induced-universal graph exponential in $k$, whereas in our proof the treewidth remains at most $3k-1$.)
    
    We describe a rooted tree-decomposition $(B_t:t\in V(T_U))$  of  $U_{n,k}$  with bags of size $\ell=3k$ and define $U_{n,k}$ such that every bag induces a clique in $U_{n,k}$. The tree $T_U$ of the decomposition is a $2^{\ell+1}$-ary tree of depth $d = \lceil c\log n \rceil$
    (where $c$ is the constant from Lemma \ref{lem:log_depth}) and the bags are recursively created as follows:
    \begin{itemize}
        \item In iteration $0$ we create a root bag $R$ consisting of $\ell$ vertices. 
        \item For $i\in \{1,\dots, d\}$, for each bag $B$ created in iteration $i-1$, for each subset $S\subseteq B$, we create two child bags of $B$, each consisting of $S$ together with $\ell-|S|$ new vertices. 
    \end{itemize}      
    The number of vertices in $U_{n,k}$ is upper bounded by $\ell\cdot |V(T_U)| \leq \ell \sum_{i=0}^d (2^{\ell+1})^i \leq 2\ell \cdot 2^{(\ell+1)d}=6k\cdot 2^{(3k+1)d}$.
    As $d = O(\log n)$, 
    $|V(U_{n,k})|=n^{O_k(1)}$. 
    
    It remains to prove that any $n$-vertex graph of treewidth at most $k$ can be embedded as a subgraph of $U_{n,k}$. Let $G$ be a graph of treewidth at most $k$ and let $(B'_x:x\in V(T_G))$ be a rooted tree-decomposition of width $\leq 3k$ with a binary tree of depth at most $ c \log n$; the existence of such a tree-decomposition follows from \cref{lem:log_depth}. We moreover assume that there are no repeated bags. We show that the tree-decomposition of $G$ can be embedded in the tree-decomposition of $U_{n,k}$ which proves the desired result.
    
    We first give an informal argument that we formalise afterwards.  The main idea is to map bags $B'_{t'}$ of level $i$ in $T_G$ to bags $B_{t}$ of level $i$ in $T_U$. For $i=0$ we embed the root bag of $T_G$ into the root bag of $T_U$ (by which we mean we create an injection between the bags and the elements in the bags). Inductively, for $i\geq 1$ we then greedily embed each bag $B'_{t'}$ of $T_G$ at level $i$ into a bag $B_{t}$ of $T_U$ at level $i$ which is the child bag of the bag $B_{p}$ of $T_U$ at level $i-1$ that we embedded $B'_{p'}$ into, for $p'$ the parent of $t'$ in $T_G$.

   To make this informal argument precise, we describe an injective mapping $f\colon V(G) \to V(U_{n,k})$ and an injective mapping $g\colon V(T_G) \to V(T_u)$ which maps 
    each node $x\in V(T_G)$ to a node $y =g(x)\in V(T_U)$ such that $f(B'_x)=B_y\cap f(V(G))$.     

     We proceed from the root of $T_G$ to the leaves. First, $g$ maps the root $r'$ of $T_G$ to the root  $r$ of $T_U$, and $f$ maps injectively the vertices of the root bag $B_{r'}'$ to vertices of the root bag $B_{r}$. Suppose we have already described the image of a node $x\in V(T_G)$ by $g$ and the images of the vertices of $B_x'$ by $f$, but not the images of the children $x_1$ and $x_2$ of $x$ under $g$, and the images of the vertices of  $B'_{x_1}\setminus B_x'$ and $B'_{x_2}\setminus B_x'$ under $f$. 
     Then $g$ maps  $x_1$ to a child of $g(x)$
     such that $f(B'_{x_1}\cap B'_{x})=B_{g(x_1)}\cap f(B_{g(x)})$, 
     and $f$ maps the vertices of $B'_{x_1}\setminus B'_{x}$ injectively to the vertices of $B_{g(x_1)}\setminus f(B_{g(x)})$. 
     We do the same for $x_2$ if $x_1\ne x_2$ by defining $g(x_2)$ as a child of $g(x)$ distinct from $g(x_1)$ such that $f(B'_{x_2}\cap B'_{x})=B_{g(x_2)}\cap f(B_{g(x)})$.  
     Note that $g$ is such that $\{g(B'_x):x \in T_G\}$ induces a subtree of $T_U$ and since $T_G$ has height at most $c \log n$, its image has the same height, which means this map is well-defined. This shows that the tree-decomposition of $G$ can be embedded in the tree-decomposition of $U_{n,k}$, as desired.
    
     We now explain how to obtain a construction which works for induced subgraphs. As the proof is very similar as the proof above, we only explain the main new ingredients and omit the technical details. We note that there are only $2^{O(\ell^2)}=O_k(1)$ possibilities for which graphs may be induced in a bag. More precisely, the construction for $T_U$ and $U$ is as follows:
    \begin{itemize}
    \item In iteration $-1$, we create an empty root bag $B_{\emptyset}$.
        \item In iteration $0$, for each graph $H$ on $\ell$ vertices, we create a bag $B_H$ consisting of $\ell$ vertices which induce the graph $H$ in $U$, and we join all bags $B_H$ to the root bag $B_{\emptyset}$ in $T_U$.
        \item For $i\in \{1,\dots, d\}$, for each bag $B$ created in iteration $i-1$, for each subset $S\subseteq B$, for each graph $H$ on $\ell$ vertices containing $U[S]$ as induced subgraph, we create two child bags $B'$ and $B''$ of $B$ consisting of $S$ and $\ell-|S|$ new vertices. In the graph $U$, we already defined which edges are present for vertices in $S$. We place edges between vertices in $B'$ and $B''$ to ensure $U$ induces $H$ on the bag (while keeping the edges between vertices of $S$ exactly the same).
    \end{itemize}  
    Since we still create $O_k(1)$ new bags for each bag of the previous iteration, and repeat this $O(\log n)$ times, the resulting graph has $n^{O_k(1)}$ vertices, as desired.
\end{proof}
We obtain the following immediate corollaries of Theorem \ref{tw upper bound} for $k=1$ and $k=2$, where we use the fact that a graph has treewidth at most 2 if and only if it is $K_4$-minor-free, that $K_4$-minor-free graphs are planar, and the fact that $K_7$ has treewidth 6.

\begin{corollary}\label{cor:poly upper bound1}
The class of trees has induced-universal graphs of treewidth 2 and polynomial order.  In particular the class of trees has planar induced-universal graphs of polynomial order.
\end{corollary}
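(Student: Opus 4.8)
The plan is to obtain this as an immediate specialisation of Theorem~\ref{tw upper bound} to the value $k=1$. First I would recall that a graph has treewidth at most $1$ precisely when it is a forest, so every $n$-vertex tree lies in the class $\mathcal{G}_{\mathrm{tw}\le 1}$. Theorem~\ref{tw upper bound} applied with $k=1$ then provides, for each $n$, a graph $U_{n,1}$ of treewidth $3\cdot 1-1=2$ and of order $n^{O_1(1)}$ which contains every $n$-vertex graph of treewidth at most $1$ --- in particular every $n$-vertex tree --- as an induced subgraph. Since $k=1$ is a fixed constant, the exponent $O_1(1)$ is an absolute constant, so the order is polynomial in $n$; this gives the first statement.

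For the ``in particular'' part I would use the classical fact that the graphs of treewidth at most $2$ are exactly the $K_4$-minor-free graphs (equivalently the series-parallel graphs), and that every such graph is planar. Consequently each $U_{n,1}$, having treewidth $2$, is planar, yielding a planar induced-universal family of polynomial order for trees (and hence a positive answer to Problem~\ref{pro:3}). I do not expect any genuine obstacle here: the argument is purely a matter of substituting $k=1$ into the already-established theorem and invoking the standard structural description of treewidth-$2$ graphs; the only thing worth spelling out is that fixing $k$ turns the $O_k(1)$ exponent into a true constant.
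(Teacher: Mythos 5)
Your proposal is correct and is essentially identical to the paper's own derivation: the paper obtains this corollary by applying Theorem~\ref{tw upper bound} with $k=1$ and invoking the same facts that treewidth-$2$ graphs are exactly the $K_4$-minor-free graphs and that these are planar. Nothing is missing.
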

\begin{corollary}\label{cor:poly upper bound2}
The class of graphs of treewidth 2 has induced-universal graphs of treewidth 5 and polynomial order.  In particular the class of $K_4$-minor-free graphs has $K_7$-minor-free induced-universal graphs of polynomial order.
\end{corollary}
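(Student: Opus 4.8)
The plan is to simply instantiate Theorem~\ref{tw upper bound} at $k=2$. That theorem provides, for every $k\ge 1$, a family of induced-universal graphs for $\mathcal{G}_{\mathrm{tw}\le k}$ of treewidth $3k-1$ and order $n\mapsto n^{O_k(1)}$; taking $k=2$ yields induced-universal graphs of treewidth $5$ and order $n\mapsto n^{O(1)}$, i.e.\ polynomial order. This already gives the first sentence of the corollary.

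For the ``in particular'' statement I would invoke two standard facts. First, a graph has treewidth at most $2$ if and only if it is $K_4$-minor-free, so $\mathcal{G}_{\mathrm{tw}\le 2}$ is exactly the class of $K_4$-minor-free graphs. Second, treewidth is minor-monotone and $\tw(K_7)=6$; hence any graph of treewidth at most $5$ contains no $K_7$-minor. Applying this to the universal graphs produced above, each of them is $K_7$-minor-free, which yields the claim: the class of $K_4$-minor-free graphs has $K_7$-minor-free induced-universal graphs of polynomial order.

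There is essentially no obstacle here, since the statement is a direct reading of Theorem~\ref{tw upper bound} together with the elementary minor-monotonicity of treewidth and the well-known characterisation of treewidth-$2$ graphs as the $K_4$-minor-free graphs. The only point worth double-checking is that the hidden constant $O_k(1)$ in the exponent of the order bound is a genuine absolute constant once $k$ is fixed to $2$, which is clear from the construction used to prove Theorem~\ref{tw upper bound}: the tree underlying the universal tree-decomposition there is $2^{O(k)}$-ary of depth $O(\log n)$, so for $k=2$ it is $O(1)$-ary of depth $O(\log n)$, giving $n^{O(1)}$ vertices, and the edge set is fixed by the (finitely many, for fixed $k$) choices of induced graph on each bag.
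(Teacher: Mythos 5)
Your proposal is correct and matches the paper's own argument: the corollary is obtained by instantiating Theorem~\ref{tw upper bound} at $k=2$, identifying treewidth-$2$ graphs with $K_4$-minor-free graphs, and noting that since $\tw(K_7)=6$ and treewidth is minor-monotone, a graph of treewidth $5$ is $K_7$-minor-free. No gaps.
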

The result of Corollary~\ref{cor:poly upper bound1} provides a positive answer to Problem~\ref{pro:3}.

\medskip

In \cite{BILOSW24}, it is shown that the class of outerplanar graphs has planar subgraph-universal graphs of order $n\mapsto 2^{O(\log^2n)}$. In the proof of their result, the authors show the following result.
\begin{lemma}[Lemma 13 in~\cite{BILOSW24} ]
\label{lem:treewidth3forpw2}    
For every $n\geq 3$, there is a graph $H_n$ of treewidth at most 3 on less than $n^2$ vertices with an edge $e^*\in E(H_n)$ such that for each $n$-vertex simple 2-path $G$ and each edge $e=uv$ of $G$ with $d(u)=2$, $G$ can be embedded as subgraph of $H_n$ in such a way that $e$ is mapped to $e^*$.
\end{lemma}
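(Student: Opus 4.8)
The plan is to prove the statement by an explicit construction, driven by a clean structural description of simple $2$-paths.

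\emph{Structure.} An $n$-vertex simple $2$-path $G$ is a sequence of triangles $T_1,\dots,T_{n-2}$ (the bags of a reduced width-$2$ path-decomposition), consecutive ones glued along pairwise distinct edges. I would first record the elementary fact that $T_{i-1}\cap T_i$ always contains the vertex introduced by $T_{i-1}$; hence the adhesions ``roll'' and $G$ decomposes canonically into a \emph{path of fans} $F_1,\dots,F_r$ --- maximal runs of consecutive triangles through a common vertex, the \emph{apex} --- where consecutive fans share exactly one edge, the apex of $F_{k+1}$ is the second-to-last path-vertex of $F_k$, and the first path-vertex of $F_{k+1}$ is the last path-vertex of $F_k$. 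One also checks that, apart from degenerate behaviour of very short end fans, the only degree-$2$ vertices of $G$ are the free ends of $F_1$ and $F_r$; so by choosing the orientation of the decomposition, the prescribed degree-$2$ vertex $u$ can be taken to be the free end of the last fan, and its distinguished neighbour $v$ is then forced to be either the apex or the second-to-last path-vertex of that fan. Thus $G$ is captured by the fan lengths $m_1,\dots,m_r$ (with $\sum_k m_k=n-2$), the gluing offsets, and the two ``final'' cases for $(u,v)$.

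\emph{The graph $H_n$.} I would then build a single graph $H_n$ into which all such paths of fans embed. Each fan is placed into a ``fan-slot'': an apex vertex joined to a path (the spine of the slot). The slots are glued along edges and --- this is the point --- arranged so that embedding a length-$m$ fan makes its ``exit edge'' land at a position determined only by bounded data, so the same slot serves exponentially many different $G$'s. Concretely I would realise $H_n$ with a width-$3$ path-decomposition whose bags are at most two apex vertices plus at most two spine vertices: each apex is adjacent to a contiguous interval of a shared spine and consecutive apexes are adjacent; since the slots are glued along cliques (edges), $\tw(H_n)\le 3$. After $F_1,\dots,F_{k-1}$ the remaining fan $F_k$ has length at most $n-1-k$, so the slots can have spines of lengths about $n,n-1,\dots,1$, giving $|V(H_n)|=\binom{n}{2}+O(n)<n^2$. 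Finally, at the ``last fan'' side I attach a constant-size gadget carrying $e^*$, designed so that either possibility for $v$ can be routed onto $e^*$; this gadget is exactly what pushes the width from $2$ to $3$.

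\emph{Embedding and verification.} Given $(G,e=uv)$ with $d(u)=2$, I orient the fan decomposition so $u$ is the free end of $F_r$ and embed $F_1,F_2,\dots$ into the successive slots --- each time sending the fan's apex to an apex vertex, its path along the corresponding spine interval, and the two vertices shared with the next fan to the designated start of the next slot --- then use the end-gadget to send the edge at $u$ to $e^*$. The verification has three parts: every edge of $G$ has an image (within-fan apex-to-path edges, spine edges and apex-to-apex edges at fan boundaries are all present by construction, with spines long enough); the map is injective (fresh vertices always land on unused spine vertices); and the bounds on $|V(H_n)|$ and $\tw(H_n)$.

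\emph{Main obstacle.} The real difficulty is keeping $|V(H_n)|<n^2$ instead of exponential. Because $F_k$ can attach to $F_{k+1}$ in a way that depends on the \emph{length} of $F_k$, the naive ``branch-at-every-length'' tree of fan-slots has $2^{\Theta(n)}$ vertices, while merging branches too aggressively collapses $H_n$ to a graph with a large clique (and large treewidth). The construction must therefore make the hand-off between consecutive slots depend only on bounded state, while never allowing more than two apex-intervals to overlap at a spine vertex so that $\tw(H_n)\le 3$; getting this balance right is the delicate part. Dealing with the fixed edge $e^*$, on the other hand, is routine: since $u$ has degree exactly $2$, its unique triangle is forced onto a fixed triangle through $e^*$ with no freedom, and this rigidity is what makes treewidth $3$ rather than $2$.
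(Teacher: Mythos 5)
This lemma is cited from \cite{BILOSW24} (their Lemma 13) and is not proved in the present paper, so there is no ``paper's own proof'' to compare against; I am therefore evaluating your proposal on its own merits.

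Your structural setup (simple $2$-paths as a ``path of fans'') is a sensible starting point, although the transition rule you state is not quite complete: at a fan boundary, where the last adhesion of $F_k$ is $\{a_k,p\}$ and the first adhesion of $F_{k+1}$ is $\{p,q\}$ with $q$ new, the apex of $F_{k+1}$ is the common vertex of $\{p,q\}$ and the next adhesion, which can be \emph{either} $p$ (the last path-vertex of $F_k$) \emph{or} $q$ (the newly introduced vertex) --- not uniquely ``the second-to-last path-vertex of $F_k$'' as you claim. Any construction has to accommodate both possibilities. The more serious issue, however, is that you have not actually constructed $H_n$. You propose slots with spines of lengths roughly $n,n-1,\dots,1$ and assert $|V(H_n)|=\binom n2+O(n)$, but you then candidly observe that the exit point of fan $F_k$ inside its slot depends on the unbounded quantity $m_k$, so a fixed linear chain of slots glued along fixed edges does not obviously receive every simple $2$-path, while branching at each possible length blows the size up exponentially and merging too aggressively blows up the treewidth; you call getting this balance right ``the delicate part'' and leave it unresolved. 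Since the whole content of the lemma \emph{is} the explicit graph $H_n$ of treewidth at most $3$ on fewer than $n^2$ vertices together with a verified embedding, what you have is a plan that names the correct obstacle but does not overcome it --- a genuine gap, not a complete proof.
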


In~\cite{BILOSW24}, simple 2-paths are called \textit{path-like outerplanar graphs}. Moreover, it is not explicitly mentioned in their lemma that the graph has treewidth 3 but this follows directly from the construction (see more specifically Figure 9 in~\cite{BILOSW24}).

\begin{theorem}\label{thm:K5-m-free universal for K4-m-free}
The  class of graphs of treewidth at most 2 has subgraph-universal graphs of treewidth at most 3  and order at most $n\mapsto n^{20\log_2 n}$.
\end{theorem}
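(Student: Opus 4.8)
The plan is to reduce to $2$-trees and then build $U_n$ by a recursion that halves the number of vertices at each step, using Lemma~\ref{lem:treewidth3forpw2} to handle a path-like ``backbone'' at each level. I would use two easy reductions: every graph of treewidth at most $2$ on $n\ge 3$ vertices is a spanning subgraph of some $2$-tree on $n$ vertices (complete a width-$2$ tree-decomposition to one with all bags of size $3$ and all intra-bag edges present), and every $2$-tree on at least $3$ vertices has a simplicial vertex, which has degree exactly $2$. So it suffices to construct, for every $m\ge 2$, a graph $U_m$ of treewidth at most $3$ on at most $m^{20\log_2 m}$ vertices with a distinguished edge $x_my_m$, such that for every $2$-tree $G$ on at most $m$ vertices and every edge $ab\in E(G)$ with $d(a)=2$ there is a subgraph embedding $G\hookrightarrow U_m$ sending $a\mapsto x_m$ and $b\mapsto y_m$; I would keep along the recursion the invariant that $U_m$ has a width-$3$ tree-decomposition one of whose bags contains $\{x_m,y_m,z_m\}$, where $z_m$ is the image of the third vertex of the bag of $a$ (this is what makes the gluing below cost nothing in treewidth). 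For $m$ below a fixed constant, $U_m$ is the union of all relevant $2$-trees identified along a common triangle, which has treewidth at most $2$ and bounded order; and applying the universal property of $U_n$ to a $2$-tree containing a given treewidth-$\le 2$ graph handles all $n$.

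For the recursive step I would take $G$ on $m'\le m$ vertices, an edge $ab$ with $d(a)=2$, and the unique bag $\{a,b,c\}$ of $a$, which may be assumed to be a leaf bag of a tree-decomposition $T$ of $G$ with all bags triangles and all adhesions of size $2$. By a centroid-type argument I would extract a simple $2$-path $P$, a subgraph of $G$, having $\{a,b,c\}$ as a leaf bag with $a$ private, and such that $V(G)\setminus V(P)$ partitions into sets $V_1,\dots,V_r$ with $|V_i|\le m'/2$ and $N_G(V_i)$ contained in a single edge $e_i$ of some bag of the path-decomposition of $P$; then $G_i:=G[V_i\cup e_i]$ is a $2$-tree on at most $s:=\lceil m/2\rceil+2$ vertices rooted at $e_i$, with $s<m$ once the recursion runs, the $G_i$'s meet $P$ and one another only inside bags of $P$, and $G=P\cup\bigcup_{i=1}^r G_i$. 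Since $P$ has at most $m$ vertices, Lemma~\ref{lem:treewidth3forpw2} — after padding $P$ up to exactly $m$ vertices at the end opposite $a$ if needed — gives an embedding $P\hookrightarrow H$, where $H:=H_m$ has fewer than $m^2$ vertices and treewidth at most $3$, sending the leaf edge $ab$ to the distinguished edge $e^{*}$ of $H$; inspecting that construction I would moreover arrange that the leaf triangle $\{a,b,c\}$ is sent to a fixed triangle $\{x_H,y_H,z_H\}$ of $H$, and set $(x_m,y_m,z_m):=(x_H,y_H,z_H)$.

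It then remains to assemble $U_m$ from $H$ by attaching, for every edge $f$ of $H$ and every ordering of its two endpoints, $m$ vertex-disjoint copies of $U_s$, each glued to $H$ by identifying the ordered pair $(y_s,z_s)$ of the copy with $f$ (keeping $x_s$ as a fresh private vertex). Given $G$ as above, one embeds $P$ into $H$, and for each $i$ adds a simplicial vertex to $G_i$ on $e_i$, embeds the resulting $2$-tree (on at most $s$ vertices, with the new vertex of degree $2$) into a fresh copy of $U_s$ glued along the $H$-edge that $e_i$ is mapped to, in the orientation sending $e_i$ onto $y_sz_s$, and then forgets the extra vertex; since $r<m$ the supply of copies suffices, and distinct $G_i$ use disjoint copies so there is no clash off $P$. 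The treewidth of $U_m$ is at most $3$: glue a width-$3$ tree-decomposition of $H$ to those of the copies of $U_s$ by joining the marked bag of each copy (which contains $\{x_s,y_s,z_s\}$, hence $f$) to a bag of $H$'s decomposition containing $f$, and use a bag containing $\{x_H,y_H,z_H\}$ to re-establish the invariant. For the order, $H$ is $3$-degenerate, so has fewer than $3m^2$ edges, hence $|V(U_m)|\le m^2+6m^3|V(U_s)|\le m^4|V(U_s)|$; since $s\le 3m/4$ for all but finitely many $m$, a short computation gives $m^4 s^{20\log_2 s}\le m^{20\log_2 m}$, and the remaining values of $m$ are base cases. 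Taking $m=n$ yields the theorem.

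The step I expect to be the main obstacle is the structural decomposition of $G$ into a simple $2$-path backbone plus half-size pieces attached along single edges: Lemma~\ref{lem:treewidth3forpw2} applies only to genuine simple $2$-paths, so one must be careful with the ``book'' configurations of $T$ (collections of triangles sharing a common edge, which are not simple $2$-paths), ensuring that they become families of small hanging pieces rather than forcing the would-be backbone out of the class of simple $2$-paths; the right invariant here seems to be that each \emph{component} of $G-V(P)$ has at most $m'/2$ vertices, and the greedy growth of $P$ should stop at, rather than pass through, book nodes. A secondary point is checking that the graph $H_n$ of Lemma~\ref{lem:treewidth3forpw2} can be chosen so that, beyond sending the prescribed leaf edge to $e^{*}$, it sends the whole leaf triangle of the backbone to a fixed triangle — this canonicity is what lets the recursive copies of $U_s$ be pre-attached to fixed edges of $H_n$ before the backbone is known.
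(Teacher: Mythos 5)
Your overall architecture is the same as the paper's: reduce to $2$-trees, recurse with a halving parameter, embed a simple-$2$-path backbone into $H_m$ via Lemma~\ref{lem:treewidth3forpw2}, hang the (half-size) remaining pieces on copies of the smaller universal graph glued along edges of $H_m$, and control treewidth by edge-gluing and the order by the recursion $|V(U_m)|\le \poly(m)\cdot|V(U_{\approx m/2})|$. However, two steps you yourself flag are genuine gaps, and in both cases your specific design choices make them harder than necessary. First, your inductive invariant (a prescribed edge $ab$ with $d(a)=2$ sent to $x_my_m$, \emph{and} the third vertex of $a$'s bag sent to a fixed $z_m$) forces you to strengthen Lemma~\ref{lem:treewidth3forpw2} so that the whole leaf triangle of the backbone lands on a fixed triangle of $H_m$; the lemma as stated only controls the image of the edge $e$, and you give no argument that the construction of~\cite{BILOSW24} can be so canonicalised. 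The paper avoids this entirely by taking the weaker invariant ``for \emph{any} edge $e'$ of $G$ (no degree condition), $G$ embeds with $e'\mapsto e_n$'': then each hanging piece $G[D\cup\{u',v'\}]$ is embedded directly with $u'v'$ sent to the glued edge, no added simplicial vertex and no third-vertex control needed; the degree-$2$ hypothesis of Lemma~\ref{lem:treewidth3forpw2} is not assumed inductively but \emph{derived} for the backbone (the first adhesion of the backbone cannot be $\{u,v\}$, else $\{u,v\}$ would be a cut-set of the chosen part), which is what lets the weaker invariant close the induction.

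Second, the structural decomposition you defer (``centroid-type'' extraction of a simple $2$-path $P$ with all components of $G-V(P)$ of size at most $m'/2$, each attached along an edge of a bag of $P$) is the heart of the embedding argument, and the heuristic you propose for it is not right: if the backbone \emph{stops at} a book node (an edge lying in many triangles) while the bulk of $G$ lies beyond that book, then the component beyond has more than $m'/2$ vertices (its neighbourhood is still just the book edge), and your size recursion collapses. The correct move is the opposite: grow the path of bags greedily \emph{towards} the unique too-large subtree of the tree-decomposition (at each node at most one direction is too large, so the path is well defined and stops when everything left is small), passing through a book via at most two of its pages so that adhesions along the backbone stay distinct, and letting all skipped pages hang off as small pieces on the shared edge. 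With that construction, plus the weaker edge-only invariant above, your proof becomes essentially the paper's; as written, the two deferred steps are exactly where the proof lives, so the proposal is not yet a proof.
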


\begin{proof}
We give an inductive construction for a graph $U_n$ of treewidth at most 3 which contains every $n$-vertex  graph $G$ of treewidth at most 2 as a subgraph.  
We will construct $U_n$ with a special edge $e_n$ of $U_n$ such that for any $e'\in E(G)$,
we can embed $G$ into $U_n$ such that $e'$ is mapped to $e_n$. This will help us to embed our graphs recursively.

Let $U_{3}$ be a triangle with an arbitrary edge assigned as $e_3$. Then indeed $|V(U_3)|=3\leq 3^{20\log_2 3}$.

Let $n\geq 4$. We assume that $U_{\lceil n/2+1\rceil}$ has already been constructed ($\lceil 4/2+1\rceil=3$) with the desired properties.
Let $H_n$ be the graph from Lemma~\ref{lem:treewidth3forpw2} on less than $n^2$ vertices and treewidth at most 3, with edge $e^*\in E(H_n)$ such that for each $n$-vertex simple 2-path $G$ and each edge $e=uv$ of $G$ with $d(u)=2$, $G$ can be embedded as subgraph of $H_n$ in such a way that $e$ is mapped to $e^*$.

We set $e_n=e^*$. We obtain $U_n$ by gluing to every edge $e$ of $H_n$ a set of $n-1$ copies of $U_{\lceil n/2 +1 \rceil} $ such that $e$ is identified with every edge $e_{\lceil n/2+1 \rceil} $ in $U_{\lceil n/2 +1 \rceil} $. Since $U_n$ is obtained by repeatedly gluing graphs of treewidth at most $3$ on edges (i.e., cliques of size 2), $U_n$ has treewidth at most $3$.

We now bound the number of vertices in $U_n$. Note that since $H_n$ has treewidth $3$, $H_n$ is $3$-degenerate and thus $|E(H_n)|\leq 3|V(H_n)|$, so
\begin{align*}
    |V(U_n)|=|V(H_n)|+(n-1)|E(H_n)|\cdot (|V(U_{\lceil n/2+1 \rceil})|-2)\leq 3 n |V(H_n)|\cdot |V(U_{\lceil n/2+1 \rceil})|.
\end{align*}
For $n\geq 4$ we have $\lceil n/2+1\rceil\leq  \tfrac78 n$, and thus, since $|V(H_n)|\leq n^2$ and $|V(U_{\lceil n/2+1 \rceil})| \leq n^{20 \log_2(\lceil n/2+1 \rceil)}$, we find that
\begin{align*}
    |V(U_n)|\leq 3n^3n^{20 \log_2(\lceil n/2+1 \rceil)}\leq 3n^{3+20(\log_2 n+\log_2(7/8))}\leq n^{20\log_2 n},
\end{align*}
where we have used the fact that $n^{3+20\log_2(7/8)}\le 1/3$ for any $n\ge 4$.

Since we only need to embed the graphs as subgraphs,  we can restrict ourselves to 2-trees. Let $G$ be a 2-tree. So $G$ has a tree-decomposition $(B_t: t\in V(T))$ in which each bag induces a clique of size 3 and each adhesion has size $2$. Let $e'=uv$ be an edge of $G$. We show that we can embed $G$ into $U_n$ as a subgraph, while mapping $e'=uv$ to $e_n$.

Consider the vertex set $\{u,v\}$. Note that there is at most one component of $G\setminus \{u,v\}$ with more than $\frac{n-2}{2}$ vertices. Choose the largest component $D'$ of $G\setminus \{u,v\}$ and set $G'=G[D'\cup \{u,v\}]$. We now want to define $t'$ such that $\{u,v\}\subseteq B_{t'} \subseteq G'$. If $\{u,v\}$ is not a cut-set, then $G'=G$ and it suffices to choose any $t'$ such that $\{u,v\}\subseteq B_{t'}$, which exists since $uv$ is an edge.
If $\{u,v\}$ is a cut-set, we consider an arbitrary bag in $(B_t: t\in V(T))$ containing a vertex which is not in $D'\cup \{u,v\}$ and the closest bag to it, denoted $B_{t'}$, containing a vertex in $D'$. Then $B_{t'}$ contains exactly one vertex from $D'$, and since every bag has size $3$ and induces a clique, the two other vertices in $B_{t'}$ can only be $u$ and $v$ as no other vertex not in $D'$ has neighbours in $D'$ and all bags have size 3.

We construct a path $P$ in $T$ starting at $t'$ such that each component of $G\setminus ( \bigcup_{p\in V(P)}B_p)$ has size at most $\frac{n-2}{2}$. Let $t_1=t'$ and suppose we defined $t_1, \ldots, t_i$ for some $i\geq 1$. There is at most one neighbour $s$ of $t_i$ for which the subtree $T'$ of $T\setminus t_i$ containing $s$ satisfies $|(\bigcup_{t\in V(T')}B_t)\setminus B_s|> \frac{n-2}{2}$. If there is no such neighbour or this neighbour is $t_{i-1}$, then we abort. Otherwise, let $t_{i+1}$ be such a neighbour and we continue. We end up with a path $P=\{t_1,\dots,t_\ell\}$ starting at $t'$ such that all components of  $G\setminus ( \bigcup_{p\in V(P)}B_p)$ have size at most $\frac{n-2}{2}$.

Then the graph $G_0 =G[\bigcup_{p\in V(P)}B_p]$ is a simple 2-path, since the tree-decomposition restricted to the path $P$ gives a path-decomposition for $G_0$. Moreover, by construction, $G_0$ is an induced subgraph of $G'$, since every connected component of $G\setminus \{u,v\}$ except $D'$ is a connected component of $G\setminus B_{t'}$ and has size at most $\frac{n-2}{2}$. 
Thus $u$ or $v$ has degree $2$ in $G_0$ as the first adhesion (i.e. the intersection of the first two bags) of the $2$-path cannot equal $\{u,v\}$, because then $\{u,v\}$ would be a cut-set of $G'$.

We next show how to attach the components $D$ of $G \setminus (\bigcup_{p \in V(P)}V(B_p))$. 
Let $D$ be such a component. Then $|D|\leq \frac{n-2}{2} \leq \lceil n/2-1\rceil$ by construction of $P$. 

There are exactly two vertices $u',v'\in (\bigcup_{p \in V(P)}V(B_p))\cap N_G(D)$ and since they share a bag, $u'v'\in E(G)$. Let $f$ be the edge of $U_n$ that $u'v'$ got mapped to when embedding $G_0$ into the copy of $H_n$ in $U_n$. 
Since $|D\cup \{u',v'\}|\leq  \lceil n/2+1\rceil$, we can embed $G[D\cup \{u_f,v_f\}]$ in a copy of $U_{\lceil n/2+1 \rceil}$ containing $f$, while sending $u'v'$ to $f$. 
Repeating this for each component (always using a different copy of $U_{\lceil n/2+1 \rceil}$), this embeds $G$ into $U_n$.
\end{proof}

As graphs of treewidth 3 are $K_5$-minor-free, we obtain the following as an immediate corollary.

\begin{corollary}\label{cor:K5-m-free universal for K4-m-free}
The  class of $K_4$-minor-free graphs has $K_5$-minor-free subgraph-universal graphs  of order $n\mapsto n^{20\log_2 n}$.
\end{corollary}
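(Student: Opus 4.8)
The plan is to read off the statement directly from Theorem~\ref{thm:K5-m-free universal for K4-m-free} together with two classical facts about treewidth and minors. First I would recall the characterisation that a graph is $K_4$-minor-free if and only if its treewidth is at most $2$ (this is folklore, going back to Wagner's characterisation of graphs with no $K_4$ minor as those built from edges and triangles by taking $2$-sums). Hence the class $\mathcal{G}_{\mathrm{tw}\le 2}$ appearing in Theorem~\ref{thm:K5-m-free universal for K4-m-free} is exactly the class of $K_4$-minor-free graphs, and in particular every $n$-vertex $K_4$-minor-free graph embeds as a subgraph into the graph $U_n$ produced there.

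Second I would invoke the minor-monotonicity of treewidth: if $H$ is a minor of a graph $F$ then $\mathrm{tw}(H)\le\mathrm{tw}(F)$. Since $\mathrm{tw}(K_5)=4$, any graph with a $K_5$ minor has treewidth at least $4$; contrapositively, every graph of treewidth at most $3$ is $K_5$-minor-free. The graph $U_n$ from Theorem~\ref{thm:K5-m-free universal for K4-m-free} has treewidth at most $3$ and order at most $n^{20\log_2 n}$, so it is $K_5$-minor-free and of the claimed order, completing the proof.

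There is essentially no obstacle here: the only ingredients beyond Theorem~\ref{thm:K5-m-free universal for K4-m-free} are the equivalence ``$K_4$-minor-free $\Leftrightarrow$ treewidth $\le 2$'' and the implication ``treewidth $\le 3 \Rightarrow K_5$-minor-free'', both of which are standard and already used implicitly in the excerpt (e.g.\ in Corollary~\ref{cor:poly upper bound2}, where the same pair of facts is cited). If one wanted to avoid even citing Wagner's theorem, one could instead argue that $K_4$-minor-free graphs are series-parallel and therefore have treewidth at most $2$, but either route is immediate.
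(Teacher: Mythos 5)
Your proposal is correct and matches the paper's own derivation: the corollary is obtained exactly by combining Theorem~\ref{thm:K5-m-free universal for K4-m-free} with the standard facts that $K_4$-minor-free graphs are precisely the graphs of treewidth at most $2$ and that graphs of treewidth at most $3$ are $K_5$-minor-free. Nothing is missing.
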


\section{Exponential lower bounds for faithful universal graphs}\label{sec:lb}

In this section we obtain exponential lower bounds on the size of faithful universal graphs for various families, including bounded degree graphs, graphs of bounded treewidth or pathwidth, and $K_t$-minor-free graphs.

\subsection{Bounded degree}

Let $\mathcal{F}_\Delta$ denote the class of graphs with maximum degree at most $\Delta$.
Recall that there are induced-universal graphs of order $O(n^{\Delta/2})$ for $\mathcal{F}_\Delta$ \cite{AN19}, and that this bound is best possible. We now show that if we fix the maximum degree $d$ of our universal graphs to be  $n^{o(1)}$, then such universal graphs need to have order $n^{(\Delta/2-1-o(1)) n}$, which is (asymptotically) not much better than taking the disjoint union of all $n$-vertex graphs of maximum degree $\Delta$. 

\begin{theorem}\label{thm:degree}
Let $U$ and $U'$ be two graphs with maximum degree $d$. If $U$ contains all $n$-vertex graphs from $\mathcal{F}_\Delta$ as induced subgraphs and $U'$ contains all $n$-vertex graphs from $\mathcal{F}_\Delta$ as subgraphs, then \[|V(U)|\ge \exp\left(n\big((\Delta/2-1)\log n-O_\Delta(1)-\log(4d)\big)\right), \text{ and} \]
\[|V(U')|\ge \exp\left(n\big((\Delta/2-1)\log n-O_\Delta(1)-\log(4d)-2\Delta  \log \tfrac{de}{\Delta}\big)\right).\]
In particular, if $d=n^{o(1)}$, then $|V(U)|\ge n^{(\Delta/2-1-o(1)) n}$ and $|V(U')|\ge n^{(\Delta/2-1-o(1)) n}$.
\end{theorem}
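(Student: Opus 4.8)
The plan is a double-counting argument comparing the abundance of connected bounded-degree graphs with the scarcity of connected vertex subsets inside a bounded-degree host graph. The first ingredient is a lower bound on the number $N_n$ of isomorphism classes of connected $n$-vertex graphs of maximum degree at most $\Delta$: by the configuration-model enumeration of ($\Delta$-regular, or near-$\Delta$-regular when $\Delta n$ is odd) labelled graphs there are $\exp\!\big(\tfrac{\Delta}{2}n\log n+O_\Delta(n)\big)$ of them, and for $\Delta\ge 3$ a $(1-o(1))$-fraction are connected; since each isomorphism class accounts for at most $n!$ labellings, $N_n\ge\exp\!\big((\Delta/2-1)n\log n-O_\Delta(n)\big)$.

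The second ingredient bounds how many connected $n$-vertex subsets a graph $W$ of maximum degree $d$ on $M$ vertices can have: any such subset is the vertex set of some $n$-vertex subtree of $W$, and $W$ has at most $M(4d)^n$ subtrees of order $n$ (root at each vertex and use the standard bound $\le(ed)^n<(4d)^n$ on the number of $n$-vertex subtrees through a fixed vertex). Now, if $U$ contains every connected $n$-vertex graph of $\mathcal F_\Delta$ as an \emph{induced} subgraph, then each such graph equals $U[S]$ for some connected $n$-subset $S$, and non-isomorphic graphs force distinct sets $S$; hence $N_n\le |V(U)|\,(4d)^n$, and substituting the lower bound on $N_n$ and rearranging yields the first inequality.

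If instead $U'$ contains every such graph only as a subgraph, then $H$ need only be a spanning subgraph of $U'[S]$ for some connected $n$-subset $S$. Since $U'[S]$ has at most $dn/2$ edges while $H$ has at most $\Delta n/2$ edges (and $d\ge\Delta$ because $U'$ contains a $\Delta$-regular graph), a fixed $S$ hosts at most $\sum_{j\le\Delta n/2}\binom{dn/2}{j}\le(ed/\Delta)^{\Delta n/2}$ isomorphism classes, so $N_n\le|V(U')|\,(4d)^n\,(ed/\Delta)^{\Delta n/2}$. Rearranging gives $|V(U')|\ge\exp\!\big(n((\Delta/2-1)\log n-O_\Delta(1)-\log(4d)-\tfrac{\Delta}{2}\log(de/\Delta))\big)$, which is at least the claimed bound since $\tfrac{\Delta}{2}\le 2\Delta$ and $\log(de/\Delta)\ge 0$. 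Finally, when $d=n^{o(1)}$ each of $\log(4d)$, $\log(de/\Delta)$ and $O_\Delta(1)$ is $o(\log n)$, so both right-hand sides simplify to $n^{(\Delta/2-1-o(1))n}$.

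The step I expect to be the main obstacle is the first one: extracting $N_n$ with the correct leading term needs the precise configuration-model asymptotics together with the facts that almost every $\Delta$-regular graph is connected and that passing from labelled graphs to isomorphism classes costs only a factor $n!$. The subtree count and the edge-subset bookkeeping for the subgraph case are routine by comparison.
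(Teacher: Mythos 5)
Your proof is correct and takes essentially the same route as the paper: a counting argument pitting the $\exp\big((\Delta/2-1)n\log n-O_\Delta(n)\big)$ connected (near-)$\Delta$-regular graphs against the at most $|V(U)|\,(4d)^n$ connected $n$-vertex subsets of a maximum-degree-$d$ host, with an additional edge-subset count for the subgraph-universal case. The only cosmetic differences are that the paper fixes a popular vertex $v^*$ by averaging instead of multiplying by $|V(U)|$, cites Bollob\'as's enumeration of unlabelled connected regular graphs directly rather than dividing the labelled configuration-model count by $n!$, and your $\tfrac{\Delta}{2}\log(ed/\Delta)$ term is in fact slightly sharper than the paper's $2\Delta\log(de/\Delta)$, which you correctly note still yields the stated bound.
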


\begin{proof}
Let $\mathcal{G}_\Delta$ be the family of all (unlabelled) $\Delta$-regular connected graphs on $n$ vertices, and let $g_\Delta(n)$ denote the cardinality of this family (i.e., number of non-isomorphic $\Delta$-regular connected graphs on $n$ vertices). It is known that $\log g_\Delta(n)=(\Delta/2-1)n\log n-O_\Delta(n)$ \cite{Bol80,Bol82}, where the implicit multiplicative constant in the second term on the right-hand side only depends on $\Delta$. 
As each graph from $\mathcal{G}_\Delta$ appears as an induced subgraph in $U$,  there is a vertex $v^*\in V(U)$ which is contained in the induced copies of at least a $1/|V(U)|$ fraction of these graphs in $V(U)$.

We claim that there are at most $(4d)^n$ subsets $S\subseteq V(U)$ of size $n$ containing $v^*$ and inducing a connected subgraph of $V(U)$. 
Indeed, to describe any such subset $S$ we can consider an arbitrary (but fixed) ordering  of the neighbours of each vertex $u$ in $U$ as $u[1],u[2],\ldots,u[d_U(u)]$. We then
choose any spanning tree $T$ of $U[S]$ and root it at $v^*$, and label each edge $xy$ of $T$, where $x$ is the parent of $y$,  with the integer $i\in \{1,\ldots,d\}$ such that
$y=x[i]$. Note that the description of $T$ as an unlabelled rooted tree together with the edge-labelling is enough to inductively reconstruct the set $S$, starting from the root $v^*$. This can be done inductively as follows: the labels of the edges incident to the root of $T$ allow us to recover the neighbours of $v^*$ included in $S$ in the graph $U$. For each child $t$ of the root of $T$, we then consider the subtree of $T$ rooted at $t$ and by induction we can recover the remainder of the vertices of $S$. This is illustrated in Figure \ref{fig:reconstruction}.

\begin{figure}[htb]
    \centering
    \includegraphics[scale=1]{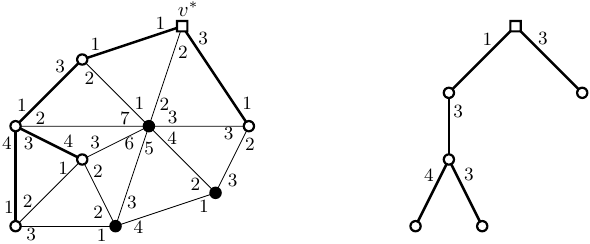}
    \caption{Illustration of the reconstruction of the set $S$ (left, depicted with white nodes) given the rooted tree $T$ and its edge-labelling (right). For the sake of readability the ordering on the neighbours $v$ of each vertex $u\in V(U)$ is written as a label on the edge $uv$, close to $u$}
    \label{fig:reconstruction}
\end{figure}

Unlabelled \emph{ordered} rooted trees are counted by Catalan numbers (see for instance \cite{Stan15}), so there are at most  $\tfrac1{2n-1}{\binom{2n-1}{n-1}}\le 4^n$ unlabelled  rooted trees on $n$ vertices. As there are at most $d^n$ possible edge-labellings for each tree,  there are at most $4^n\cdot d^n=(4d)^n$ 
subsets $S\subseteq V(U)$ of size $n$ containing $v^*$ and inducing a connected subgraph of $V(U)$. 

It follows that $(4d)^n\ge g_\Delta(n)/|V(U)|$, and thus \[|V(U)|\ge g_\Delta(n)/(4d)^n=\exp\left(n\big((\Delta/2-1)\log n-O_\Delta(1)-\log(4d)\big)\right),\] as desired.

\medskip

We now consider $U'$. As each graph from $\mathcal{G}_\Delta$ appears as a subgraph in $U'$,  there is a vertex $v^*\in V(U')$ which is contained in the  copies of at least a $1/|V(U')|$ fraction of these graphs in $V(U')$. To describe such a copy $H$ in $U'$, it suffices to describe its vertex set $S\subseteq V(U')$ (as in the paragraph above, there are at most $(4d)^n$ such $n$-vertex subsets containing $v^*$), and the subset $F\subseteq E(U'[S])$ of edges of $U'[S]$ corresponding to $H$. There are  $\binom{|E(U'[S])|}{|F|}$ choices for $F$. 
As $H$ has maximum degree $\Delta$ and $U'$ has maximum degree $d$, $|F|\le \Delta n$ and $|E(U'[S])|\le d n$, and in particular there are at most $2^{dn}$ choices for $F$. If $d\le 2\Delta$, then $2^{dn}\le 2^{2\Delta n}\le \exp(2\Delta n)$. If $d\ge 2\Delta$, then 
there are \[\binom{|E(U'[S])|}{|F|}\leq\binom{dn}{\Delta n}\le \left(\frac{de}{\Delta }\right)^{\Delta n}= \exp(\Delta n \log \tfrac{de}{\Delta})\le \exp(2\Delta n \log \tfrac{de}{\Delta})\] choices for $F$, for any fixed set $S$ of size $n$. 

\smallskip

It follows that $(4d)^n \exp(2\Delta n \log \tfrac{de}{\Delta}) \ge g_\Delta(n)/|V(U')|$, and thus \[|V(U')|\ge 
\exp\left(n\big((\Delta/2-1)\log n-O_\Delta(1)-\log(4d)-2\Delta  \log \tfrac{de}{\Delta}\big)\right),\] as desired.
\end{proof}

Theorem \ref{thm:degree} implies in particular that faithful induced-universal and subgraph-universal graphs for $\mathcal{F}_\Delta$ have order at least $n\mapsto n^{(\Delta/2-1-o(1)) n}$. 

\subsection{Treewidth, pathwidth, and minors}

Recall from the preliminaries that simple $k$-paths admit a path-decomposition $(X_1,\dots,X_m)$ such that all bags induce cliques of size $k+1$ and all adhesions are different and of size $k$. This path-decomposition is reduced thus by \cref{lem:2-trees}, $m = \max(n-k,1)$.
It can be checked that if  $G$ is a simple $k$-path with at least $k+2$ vertices, then $G$ contains exactly two vertices of degree at most $k$, and these two vertices have degree exactly $k$.

\begin{lemma}
\label{lem:num_simple_pwk_graphs}
Let $k\geq 2$ and $n\geq k+4$ be integers.
Let $\mathcal{S}_{n,k}$ be the family of unlabelled $n$-vertex simple $k$-paths (that is, we consider graphs up to isomorphism). Then    $|\mathcal{S}_{n,k}|\geq k^{n-2k-2}$.
\end{lemma}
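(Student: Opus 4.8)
The goal is to lower-bound the number of $n$-vertex simple $k$-paths by $k^{n-2k-2}$. The natural approach is to build many simple $k$-paths by an explicit encoding: start from a fixed clique on the first $k+1$ vertices, then add the remaining $n-k-1$ vertices one at a time, each time attaching the new vertex to a size-$k$ subset of the current bag so that a new bag (and new adhesion) is created. Concretely, if $(X_1,\dots,X_m)$ is the path-decomposition being built, to go from $X_i$ to $X_{i+1}$ we must drop exactly one vertex of $X_i$ and add one new vertex; the dropped vertex is the one not in the adhesion $X_i\cap X_{i+1}$. So each extension step is governed by the choice of which vertex of the current bag $X_i$ to forget, and there are $k+1$ vertices in a bag, hence up to $k+1$ choices — but we must be careful that different choice-sequences give rise to non-isomorphic graphs, and that the ``all adhesions different'' condition is respected.

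First I would set up the encoding precisely: fix vertex set $[n]$, let $X_1=\{1,\dots,k+1\}$, and for $i=1,\dots,n-k-1$ let the new vertex introduced at step $i$ be $k+1+i$, so the vertex labels are forced and the only freedom is, at each step, which vertex currently in the bag is forgotten. This gives a string $(s_1,\dots,s_{n-k-1})$ where $s_i$ identifies the forgotten vertex at step $i$. To count safely, I would restrict the choices: at step $i$ I only allow forgetting one of the $k$ ``oldest'' vertices of the current bag, never the vertex $k+1+i$ just introduced nor — to keep adhesions distinct — I would impose a mild restriction ensuring the adhesion $X_i\cap X_{i+1}$ differs from all previous adhesions. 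A clean way: since each adhesion $X_i \cap X_{i+1}$ contains the most recently added vertex present, consecutive adhesions automatically differ in a controlled way; one checks that forbidding the forgotten vertex from equalling the immediately-previously-forgotten vertex (or a similar local rule) suffices to guarantee all adhesions pairwise distinct. This loses at most a constant factor per step, but I want the clean bound $k^{n-2k-2}$, so I would instead argue that after discarding a bounded number of ``bad'' initial and final steps, each of the remaining $\ge n-2k-2$ steps independently contributes a factor of $k$ (the number of allowed forgotten vertices), and distinct strings yield non-isomorphic graphs because the isomorphism type determines the (unordered) structure which, via the degree-$k$ vertices being exactly the two ends of the path-decomposition, pins down the path-decomposition up to reversal, hence recovers the string up to reversal — and a factor of $2$ is absorbed since $n-2k-2 \le n-2k-1$ gives room.

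The main obstacle I expect is the non-isomorphism argument: two different forgetting-strings could a priori produce isomorphic simple $k$-paths. The key structural fact to exploit (stated right before the lemma) is that a simple $k$-path on $\ge k+2$ vertices has exactly two vertices of degree $k$, and all others have degree $> k$; moreover by \cref{lem:2-trees} the reduced path-decomposition is essentially unique. Using this, an isomorphism between two simple $k$-paths must map the path-decomposition of one to that of the other (up to reversal), and since in my construction the sequence of adhesions together with the ``all different'' property determines the forgetting-string, I can recover the string (up to reversal) from the isomorphism type. So distinct strings — modulo reversal and modulo the bounded number of boundary steps I threw away — give distinct graphs, yielding $|\mathcal{S}_{n,k}| \ge \tfrac12 k^{n-2k-1} \ge k^{n-2k-2}$ (the last inequality holding since $k \ge 2$). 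I would organize the write-up as: (1) the explicit construction and encoding; (2) verification that every string in the allowed set yields a genuine $n$-vertex simple $k$-path; (3) the injectivity-up-to-reversal argument via the degree-$k$ characterization; (4) the counting, collecting the factors of $k$ and absorbing the constant losses into the exponent $n-2k-2$.
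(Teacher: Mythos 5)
Your proposal matches the paper's approach: fix vertex labels, forget one of the $k$ non-newest vertices of the current bag at each step to encode a graph by a string in $[k]^{n-2k-1}$, then use the two degree-$k$ vertices to show each isomorphism class contains at most two strings, giving $|\mathcal{S}_{n,k}|\geq \tfrac12 k^{n-2k-1}\geq k^{n-2k-2}$. Your worry about enforcing pairwise-distinct adhesions is unnecessary: once you commit to never forgetting the most recently added vertex, the adhesion $X_i\cap X_{i+1}$ automatically has maximum element $k+i$, so all adhesions are distinct with no further restriction and no per-step loss; the paper also fixes the first $k+1$ bags to be $\{i,\dots,i+k\}$ so that vertices $1,\dots,k+1$ have pairwise distinct degrees, which is what makes the ``$f(1)=1$ implies $f=\mathrm{id}$'' induction (and hence the at-most-two-records-per-class count) go through cleanly.
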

\begin{proof}
Let $k\geq 2$ and $n\geq k+4$ be given. For $i\in [k+1]$, we set
\[
B_i=\{i,i+1,\dots,i+k\}.
\]
Next, we create a sequence of bags $B_{k+2},\dots,B_{n-k}$ via the following procedure. 
Having defined $B_i=\{y_1,y_2,\dots,y_{k},i+k\}$ with $y_1<y_2<\dots<y_k<i+k$ for some $i\in \{k+1,k+2,\dots,n-k-1\}$, we define $B_{i+1}$ as follows. We remove $y_j$ for some $j\in [k]$ and add the element $i+k+1$. We record $x_{i-k}=j$. 

Each possible sequence of record $x=(x_{i})_{i \in [n-2k-1]}\in [k]^{n-2k-1}$ is associated with a distinct sequence of bags containing labelled vertices. 
From such a sequence of bags, we define the graph $G(x)$ as the labelled graph on vertex set $[n]$ where two vertices are adjacent if they appear in  a common bag. 
By construction $G(x)$ is a simple $k$-path.
Two examples are given in Figure \ref{fig:outerplanar-pathlike} below for $k=2$.
\begin{figure}[htb]
    \centering
    \begin{tikzpicture}[node distance={10mm}, thick, main/.style = {draw, circle}]

\node[main] (1) {1};
\node[main] (2) [below right of=1] {2};
\node[main] (3) [above right of=2] {3};
\node[main] (4) [below right of=3] {4};
\node[main] (5) [above right of=4] {5};
\node[main] (6) [below right of=5] {6};
\node[main] (7) [above right of=6] {7};

\draw (1) -- (2);
\draw (1) -- (3);
\draw (3) -- (4);
\draw (3) -- (5);
\draw (5) -- (6);
\draw (5) -- (7);
\draw (4) -- (6);
\draw (2) -- (4);
\draw (6) -- (7);
\draw (2) -- (3);
\draw (4) -- (5);

\node[below=10mm of 4] {\(G((1,1))\)};

\node[main] (1b) [right=70mm of 1] {1};
\node[main] (2b) [below right of=1b] {2};
\node[main] (3b) [above right of=2b] {3};
\node[main] (4b) [below right of=3b] {4};
\node[main] (5b) [above right of=4b] {5};
\node[main] (6b) [below right of=5b] {6};
\node[main] (7b) [below right of=4b] {7};

\draw (1b) -- (2b);
\draw (1b) -- (3b);
\draw (3b) -- (4b);
\draw (3b) -- (5b);
\draw (5b) -- (6b);
\draw (4b) -- (7b);
\draw (4b) -- (6b);
\draw (2b) -- (4b);
\draw (6b) -- (7b);
\draw (2b) -- (3b);
\draw (4b) -- (5b);

\node[below=10mm of 4b] {\(G((1,2))\)};

\end{tikzpicture}
    \caption{Examples of two simple 2-paths $G(x)$ for $x=(1,1)$ and $x=(1,2)$.}
    \label{fig:outerplanar-pathlike}
\end{figure}

\medskip

We count the number of isomorphism types we created. It is immediate that different $x$ create different labelled graphs. Suppose that $x$ and $x'$ create isomorphic graphs $G(x)$ and $G(x')$ and let $f\colon V(G(x))\to V(G(x'))$ be a graph isomorphism. 
We have the following two properties:
\begin{itemize}
    \item $f(1)\in \{1,n\}$. Indeed, a graph isomorphism must preserve the degrees, and $1,n$ are the only vertices of degree $k$ in both graphs.
    \item $f(1)=1$ implies that $x=x'$. This can be shown inductively. We find that $f(2)$ is $2$, since it is the unique neighbour of $1$ of degree $k+1$. Similarly, $f(3)=3,\dots,f(k+1)=k+1$ since they are adjacent to $1$ and all are identified by their degree.

    Suppose that for some $i\geq k+1$, we have shown that $f(j)=j$ for all $j\in \{1,\dots,i\}$. The vertex $i+1$ is the unique vertex greater or equal to $i+1$ which is adjacent to $k$ vertices in $\{1,\dots,i\}$, in both $G(x)$ and $G(x')$ (we use the third property of simple $k$-paths). Hence $f(i+1)=i+1$. 
    So we find by induction that $f$ is the identity function and it follows also that the records $x=x'$ are the same.
\end{itemize}

Suppose we have an isomorphism type containing three distinct records $\{x,x',x''\}$, then the isomorphism $g:G(x)\to G(x'')$ must have $g(1)=n$ (otherwise $x=x''$) and $f:G(x)\to G(x')$ must have $f(1)=n$. This implies that the isomorphism $g\circ f^{-1}$ from $G(x')$ to $G(x'')$ maps $n$ to $n$ and so must map $1$ to $1$, which implies that $x'=x''$, a contradiction.

This means that if we partition the $x\in [k]^{n-2k-1}$ by the isomorphism type of $G(x)$, then all parts have size at most $2$. This implies that the number of non-isomorphic simple $k$-paths we created is at least $\frac12 k^{n-2k-1} \geq k^{n-2k-2}$. 
\end{proof}

We now show that any graph of treewidth at most $k$ which contains many non-isomorphic simple $k$-paths must be large by generalizing the argument of Theorem 11 in~\cite{BILOSW24}.

\begin{lemma}
\label{lem:pw2_lb}
Let $k\geq 2$ and $n\geq k+4$.
Let $\mathcal{S}_{n,k}$ be the family of all $n$-vertex simple $k$-paths.
If $U$ is a graph of treewidth $k$ that contains all graphs from $\mathcal{S}$ as subgraph for some $\mathcal{S}\subseteq \mathcal{S}_{n,k}$, then 
\[
|V(U)|^2\geq  |\mathcal{S}|.
\]
\end{lemma}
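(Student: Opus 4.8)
The plan is to fix an optimal tree-decomposition $(B_t : t \in V(T))$ of $U$ of width $k$ and to argue that every simple $k$-path $G \in \mathcal{S}$, embedded as a subgraph of $U$, must ``mark'' a pair of tree-nodes of $T$ in a way that determines $G$ up to boundedly many choices; since the number of pairs of tree-nodes is at most $|V(T)|^2 \le |V(U)|^2$, this gives the bound. The key structural point is the one already highlighted in the excerpt: a simple $k$-path $G$ on $n \ge k+4$ vertices has exactly two vertices of degree $\le k$, call them $a$ and $b$, and both have degree exactly $k$; removing the pair $\{a,b\}$ and contracting appropriately, one recovers the whole edge-maximal path-structure. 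So if two simple $k$-paths $G, G'$ are embedded into $U$ in ``the same position'' (in a sense to be made precise by a pair of nodes of $T$), then $G \cong G'$.

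More concretely, first I would recall that $U$ has a tree-decomposition $\mathcal{T}=(B_t : t\in V(T))$ of width $k$, which we may take to be reduced (deleting or merging bags contained in neighbours only shrinks $T$), so by Lemma~\ref{lem:2-trees} we have $|V(T)| \le \max(|V(U)|-k, 1) \le |V(U)|$. Next, for each $G \in \mathcal{S}$ fix an embedding $\varphi_G$ of $G$ as a subgraph of $U$. Restricting $\mathcal{T}$ to the image $\varphi_G(V(G))$ yields a tree-decomposition of $\varphi_G(G)$, hence (pulling back along $\varphi_G$) a tree-decomposition of $G$ of width $\le k$ indexed by the same tree $T$. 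Because $G$ is a simple $k$-path, every tree-decomposition of $G$ of width $k$ must, along some path of its index tree, realize the (essentially unique) sequence of size-$(k+1)$ bags with distinct size-$k$ adhesions; in particular the two degree-$k$ vertices $a_G, b_G$ of $G$ each lie in a bag of size exactly $k+1$ whose adhesion to the rest is their unique size-$k$ neighbourhood. Let $t_G$ be a node of $T$ whose (pulled-back) bag contains $a_G$ together with its $k$ neighbours, and $t'_G$ a node whose bag contains $b_G$ together with its $k$ neighbours.

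The heart of the argument is then: the ordered pair $(t_G, t'_G) \in V(T)^2$, together with the graph $U$, determines $G$. Indeed, given $U$ and such a pair $(t,t')$ of nodes of $T$, the unique $t$--$t'$ path in $T$ gives a path-decomposition of $U[\text{(union of bags on this path)}]$; the corresponding edge-maximal simple $k$-path supported on this path (forgetting which edges of $U$ are actually present and taking all of them within bags) is a specific graph depending only on $U$ and $(t,t')$. Since $G$ is edge-maximal within its path-decomposition and its path-decomposition embeds into exactly this $t$--$t'$ segment of $\mathcal{T}$ (with the degree-$k$ endpoints at the two ends), $G$ is isomorphic to that canonical graph. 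Hence the map $G \mapsto (t_G, t'_G)$ is injective from $\mathcal{S}$ into $V(T)^2$, giving $|\mathcal{S}| \le |V(T)|^2 \le |V(U)|^2$.

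The main obstacle I expect is making precise that the embedded path-decomposition of $G$ inside $\mathcal{T}$ really is forced to run monotonically along a single $t_G$--$t'_G$ path of $T$, with the two degree-$k$ vertices pinned to the ends — a priori the image could wander or branch in $T$. This is where one needs the rigidity of simple $k$-paths: all adhesions have size exactly $k$ and are pairwise distinct, and there are exactly two low-degree vertices. One shows that in \emph{any} width-$k$ tree-decomposition of such a $G$, the nodes whose bags have size $k+1$ and ``see'' the full structure form a path, the two extreme bags of which are the only ones whose adhesion to the rest of the graph equals the neighbourhood of a degree-$k$ vertex; this identifies $t_G$ and $t'_G$ canonically up to the (harmless) swap $a_G \leftrightarrow b_G$, and forces the claimed monotonicity. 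Once this rigidity is in hand the counting is immediate.
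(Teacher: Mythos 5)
Your high-level strategy is the same as the paper's: fix a reduced width-$k$ tree-decomposition $(B_t : t\in V(T))$ of $U$ with $|V(T)|\le |V(U)|$, attach to each embedded $G\in\mathcal{S}$ the pair of nodes whose bags are the closed neighbourhoods of its two degree-$k$ vertices, and show this pair determines $G$, so that $|\mathcal{S}|\le |V(T)|^2\le |V(U)|^2$. However, the determination step---which you yourself flag as ``the main obstacle''---is where essentially all the work lies, and your proposal neither proves it nor formulates it correctly. The rigidity claim (that in any width-$k$ decomposition the relevant bags of $G$ run monotonically along the single $t_G$--$t'_G$ path, with the two degree-$k$ vertices pinned to the ends) is only asserted. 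The paper proves it: every bag $W_i$ of the canonical path-decomposition of $G$ is a $(k+1)$-clique and hence coincides with a bag of $T$; taking the minimal subtree $T'$ of $T$ spanning all nodes $t$ with $B_t\subseteq V(G)$, each leaf of $T'$ has a private vertex of $G$-degree at most $k$, which must therefore be one of the two degree-$k$ vertices, forcing $T'$ to be exactly the $t_{G,1}$--$t_{G,2}$ path; a further degree argument on internal bags then yields $V(G)=B_{t_1}\cup B_{t_\ell}\cup\bigcup_{i}(B_{t_i}\cap B_{t_{i+1}})$, i.e.\ \eqref{eq:AG}.

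Moreover, your concrete recipe for recovering $G$ from the pair $(t,t')$---take the union of \emph{all} bags on the $t$--$t'$ path and make every bag a clique---fails as stated. An internal bag on that path is only forced to contain the two size-$k$ adhesions to its path-neighbours (which do lie in $V(G)$); when these two adhesions coincide, its remaining vertex can be an arbitrary vertex of $U$ outside the embedded copy of $G$. Hence the union of the bags along the path may strictly contain $V(G)$, so your ``canonical graph'' can have more vertices than $G$ and need not be isomorphic to it (nor need its edges be edges of $U$). The correct recovery, as in the paper, is that $V(G)$ is the union of the two end bags and the adhesions along the path, and then $G=U[V(G)]$ because a simple $k$-path is an edge-maximal graph of treewidth at most $k$ while $U[V(G)]$ has treewidth at most $k$. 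Without the rigidity argument, and with the canonical-graph step as you state it, the injectivity of $G\mapsto\{t_G,t'_G\}$ is not established, so the proposal has a genuine gap precisely at the heart of the proof.
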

\begin{proof}
Suppose that $U$ is a graph of treewidth $k$ containing all graphs in $\mathcal{S}$ as subgraph.
We consider that $V(G) \subseteq V(U)$ and $G \subseteq U[V(G)]$
for every $G \in \mathcal{S}$.
Let $(B_t :t\in V(T))$ be a tree-decomposition for $U$ with all bags of size at most $k+1$. We may assume the tree-decomposition is reduced, and thus that no bag is a subset of any other bag. 
It follows from \cref{lem:2-trees} that there are at most $|V(U)|-k$ bags.
Let $G\in \mathcal{S}$ and let $u,v$ be the unique vertices of degree $k$ in $G$.
Let $(W_1,\dots,W_m)$ be a reduced path-decomposition for $G$ of width at most $k$. Note that
$G[W_i]$ is a clique for every $i \in [m]$,
$u\in W_1$,
and $v\in W_m$. 
Since $G[W_1]$ and $G[W_m]$ both induce a $(k+1)$-clique, there exist $t_{G,1},t_{G,2}\in V(T)$ such that $W_1 \subseteq B_{t_{G,1}}$ and $W_m \subseteq B_{t_{G,2}}$. By size constraints, it follows that $W_1 = B_{t_{G,1}}$ and $W_m = B_{t_{G,2}}$.

Let $G'\in \mathcal{S}$ and let $u',v'$ be its unique vertices of degree $k$ in $G'$. We define $t_{G',1},t_{G',2}\in V(T)$ in the same manner as for $G,u,v$. We claim that if $\{t_{G,1},t_{G,2}\}=\{t_{G',1},t_{G',2}\}$, then $G\cong G'$. Since $|V(T)|\leq |V(U)|-k$, this claim implies that
\[
(|V(U)|-k)^2\geq |\mathcal{S}|,
\]
and thus $|V(U)|^2\ge |\mathcal{S}|$. 

Suppose towards a contradiction that $\{t_{G,1},t_{G,2}\}=\{t_{G',1},t_{G',2}\}$ yet $G\not\cong G'$.
Let $t_{G,1}=t_1,t_2,\dots,t_\ell=t_{G,2}$ be the path in $T$ from $t_{G,1}$ to $t_{G,2}$. 
We claim that 
\begin{equation}
    \label{eq:AG}
    V(G)=B_{t_1}\cup B_{t_\ell}\cup \left(\bigcup_{i=1}^{\ell-1} (B_{t_i}\cap B_{t_{i+1}})\right).
\end{equation}
Firstly, $B_{t_1}\cup B_{t_\ell}\subseteq V(G)$ by choice of $t_{G,1},t_{G,2}$.
So for every $i \in [\ell-1]$,
the set $V(G) \cap B_{t_i}\cap B_{t_{i+1}}$ separates $W_1$ and $W_\ell$ in $G$, and so must have size at least $k$.
Since there are no repeated bags by assumption, $|B_{t_i}\cap B_{t_{i+1}}|=k$ and so the inclusion ``$\supseteq$'' in (\ref{eq:AG}) follows.

\smallskip

Let $T'$ be the smallest connected subgraph of $T$ containing all $t$ such that $B_t\subseteq V(G)$.
First observe that for every $i \in [m]$,
$W_i$ induces a clique in $G$ and so there exists $t \in V(T)$ such that $W_i \subseteq B_t$. Since $B_t$ and $W_i$ have both size $k+1$, we have $B_t=W_i \subseteq V(G)$.
This proves that $V(G) = \bigcup_{i \in [m]} W_i \subseteq \bigcup_{t \in V(T')} B_{t}$.

\smallskip

We now show that $T'$ is the path between $t_{G,1}$ and $t_{G,2}$ in $T$.
To do so, consider a leaf $s$ of $T'$, and let $s'$ be the neighbour of $s$ in $T'$. 
By minimality of $T'$, $B_s \subseteq V(G)$.
Now, by assumption on $(T,(B_t)_{t \in V(T)})$, there is a vertex $w$ in $B_s \setminus B_{s'}$.
Then, $N_G(w) \subseteq B_s$, and so $d_G(w) \leq k$.
This implies that $w \in \{u,v\}$.
As a consequence, $N_G(w) = B_s \in \{B_{t_{1}}, B_{t_{\ell}}\}$.
Since no bag appears twice in $(T,(B_t)_{t \in V(T)})$,
we deduce that $s \in \{t_{1},t_{\ell}\}$.
This proves that $T'$ has exactly two leaves which are $t_{G,1}$ and $t_{G,2}$,
and so $T'$ is the path between $t_{1}$ and $t_{\ell}$ in $T$.
In particular, $V(G) \subseteq B_{t_1} \cup \dots \cup B_{t_\ell}$.
Moreover, for $i\in \{2,\dots,m-1\}$, any vertex $w$ in $B_{t_i}\setminus (B_{t_{i-1}}\cup B_{t_{i+1}})$ is such that $d_G(w) \leq k$. 
Hence $w \in \{u,v\}$, and so $w \in B_{t_{1}} \cup B_{t_{\ell}}$.
It follows that $V(G) \subseteq B_{t_1} \cup B_{t_\ell} \cup (\bigcup_{i=1}^{\ell-1} B_{t_i} \cap B_{t_{i+1}})$,
which proves \eqref{eq:AG}.

The exact same arguments apply to $G'$, so $V(G)=V(G')$.
Moreover, $G$ and $G'$ are maximal graphs of pathwidth at most $k$, 
so $G = U[V(G)] = U[V(G')] = G'$. 
This shows that $G$ and $G'$ are isomorphic, as desired.
\end{proof}
In particular, for $k\geq 2$, if a graph $U$ of treewidth $k$ contains all simple $k$-paths on $n$ vertices, then by the two lemmas above
\[
|V(U)|\geq \sqrt{|S_{n,k}|}\geq k^{\frac12(n-2k-2)}.
\]

Recall that the graphs of pathwidth $1$ are exactly the graphs obtained by adding pendant vertices to unions of paths. Thus the graph obtained by adding $n-1$ leaves to each vertex of a path on $2n-1$ vertices contains all $n$-vertex graphs of pathwidth $1$ as induced subgraphs. 
Since the treewidth of a graph is at most the pathwidth of a graph, we obtain the following results for pathwidth and treewidth at least $2$.
\begin{corollary}\label{cor:lmtw}
Let $k\geq 2$ and $n\geq k+4$ be integers.
If a graph $U$ of treewidth at most $k$ contains every $n$-vertex graph of pathwidth at most $k$ as subgraph, then $U$ has at least $k^{\frac12(n-2k-2)}$ vertices. In particular, 
faithful subgraph-universal graphs for the classes $\mathcal{G}_{\mathrm{pw}\le k}$ and $\mathcal{G}_{\mathrm{tw}\le k}$ both have order at least  $n\mapsto k^{\frac12(n-2k-2)}$. 
\end{corollary}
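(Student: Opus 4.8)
The plan is to deduce \Cref{cor:lmtw} directly from \Cref{lem:num_simple_pwk_graphs} and \Cref{lem:pw2_lb}, which together already do all the work; the corollary is essentially a packaging step, so I expect no genuine obstacle, only a couple of one-line observations to make explicit.

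First I would note that every $n$-vertex simple $k$-path $G$ has $\pw(G)\le k$: by definition $G$ comes equipped with a path-decomposition $(X_1,\dots,X_m)$ of width $k$. Hence, if $U$ has treewidth at most $k$ and contains every $n$-vertex graph of pathwidth at most $k$ as a subgraph, then in particular $U$ contains every member of $\mathcal{S}_{n,k}$ as a subgraph. Applying \Cref{lem:pw2_lb} with $\mathcal{S}=\mathcal{S}_{n,k}$ (the hypotheses $k\ge 2$ and $n\ge k+4$ are exactly what we are assuming, and are also those required by \Cref{lem:num_simple_pwk_graphs}) gives $|V(U)|^2\ge |\mathcal{S}_{n,k}|$, and then \Cref{lem:num_simple_pwk_graphs} gives $|\mathcal{S}_{n,k}|\ge k^{n-2k-2}$. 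Combining, $|V(U)|\ge k^{\frac12(n-2k-2)}$, which is the first assertion.

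For the ``in particular'' part I would argue that both claimed lower bounds are instances of the statement just proved. A faithful subgraph-universal graph $U_n$ for $\mathcal{G}_{\mathrm{pw}\le k}$ satisfies $\pw(U_n)\le k$, hence $\tw(U_n)\le \pw(U_n)\le k$, and it contains every $n$-vertex graph of pathwidth at most $k$ as a subgraph, so the first part applies verbatim and $|V(U_n)|\ge k^{\frac12(n-2k-2)}$. A faithful subgraph-universal graph $U_n$ for $\mathcal{G}_{\mathrm{tw}\le k}$ satisfies $\tw(U_n)\le k$ and contains every $n$-vertex graph of treewidth at most $k$ as a subgraph; since every $n$-vertex graph of pathwidth at most $k$ also has treewidth at most $k$, it contains all of those too, and again the first part applies. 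In both cases we conclude $|V(U_n)|\ge k^{\frac12(n-2k-2)}$, as desired. The only slightly delicate points are the two trivial facts used along the way, namely that a simple $k$-path witnesses pathwidth at most $k$ (immediate from its defining decomposition) and that $\tw\le\pw$ (which makes the pathwidth case of the ``in particular'' statement a special case of the treewidth case).
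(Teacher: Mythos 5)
Your proposal is correct and follows essentially the same route as the paper: the corollary is obtained by combining \Cref{lem:num_simple_pwk_graphs} and \Cref{lem:pw2_lb} (noting simple $k$-paths have pathwidth at most $k$), and the ``in particular'' clause follows since faithful universal graphs for $\mathcal{G}_{\mathrm{pw}\le k}$ and $\mathcal{G}_{\mathrm{tw}\le k}$ have treewidth at most $k$ and contain all $n$-vertex graphs of pathwidth at most $k$ as subgraphs.
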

We remark that the order of growth $2^{\Theta(n\log k)}$ is optimal for faithful subgraph-universal graphs for the class of graphs of pathwidth $k$: there is a constant $c$ such that for all $k,n$, the number $n$-vertex edge-maximal pathwidth $k$ graphs is at most $k^{cn}$ (and the disjoint union of the edge-maximal graphs in a class forms a faithful subgraph-universal graph for that class). More generally, for every proper minor-closed class $\mathcal{G}$, there is a constant $c$ such that $\mathcal{G}$ contains at most $2^{cn}$ (unlabelled) graphs on $n$ vertices \cite{BNMST24}, so the disjoint union of all such graphs (which forms an induced-universal graph for the class) contains at most $n2^{cn}=2^{O(n)}$ vertices.

\medskip

The lower bounds of Corollary \ref{cor:lmtw} also extend even if the graph is allowed to be $K_t$-minor-free, although we obtain a worse dependence on $t$ here.
To prove this, we need the following notation and lemma.
For a graph $G$ and an integer $t$, we let $G^{+t}$ denote the graph obtained from $G$ by adding $t$ universal vertices (that is, vertices inducing a clique and adjacent to all vertices of $G$). Note that for fixed $t$, the map $G\mapsto G^{+t}$ is a bijection (it is important to emphasize here that we consider unlabelled graphs throughout, that is graphs up to isomorphisms).
For a graph class $\mathcal{G}$, we denote by $\mathcal{G}^{+t}$ the class $\{G^{+t}:G\in \mathcal{G}\}$. Note that by the previous remark, $|\mathcal{G}|=|\mathcal{G}^{+t}|$ for any integer $t$. 
\begin{lemma}[Universal vertex removal lemma]
\label{lem:universal_vx_removal_lemma}
    Let $\mathcal{G}$ be a class of graphs, let $t$ be an integer,  and let $U$ contain all graphs in $\mathcal{G}^{+t}$ as subgraphs. Then $U$ contains a subgraph $U'$ which contains at least $|\mathcal{G}|/{\binom{|V(U)|}{t}}$ graphs of $\mathcal{G}$ as subgraphs, as well as a clique on $t$ vertices in $U-V(U')$ which is adjacent to all vertices of $U'$. 
\end{lemma}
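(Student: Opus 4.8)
The plan is a straightforward averaging argument over the choice of the $t$ universal vertices. Since $U$ contains every graph $G^{+t}$ for $G \in \mathcal{G}$ as a subgraph, fix for each $G \in \mathcal{G}$ an embedding $\phi_G$ of $G^{+t}$ into $U$. The $t$ universal vertices of $G^{+t}$ are sent by $\phi_G$ to some $t$-element subset $S_G \subseteq V(U)$, and moreover $S_G$ induces a clique in $U$ and every vertex of $S_G$ is adjacent in $U$ to every vertex of $\phi_G(V(G))$; in particular $\phi_G$ witnesses that $G$ is a subgraph of $U - S_G$ (the graph obtained by deleting the vertices of $S_G$), and that $S_G$ is a $t$-clique in $U$ fully adjacent to $\phi_G(V(G))$.

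Now consider the map $G \mapsto S_G$ from $\mathcal{G}$ to the family of $t$-element subsets of $V(U)$; there are $\binom{|V(U)|}{t}$ such subsets. By the pigeonhole principle there is a single $t$-set $S^\star \subseteq V(U)$ such that $S_G = S^\star$ for at least $|\mathcal{G}|/\binom{|V(U)|}{t}$ graphs $G \in \mathcal{G}$ (here we use that $|\mathcal{G}| = |\mathcal{G}^{+t}|$, so the domain of the averaging argument really has size $|\mathcal{G}|$; for a class of unlabelled graphs each $G$ gives a distinct $G^{+t}$, so there is no double counting). Let $\mathcal{G}^\star$ be the set of such $G$, and set $U' = U - S^\star$, the subgraph of $U$ induced on $V(U) \setminus S^\star$ (restricted to whatever edge set we like, since we only need it as a subgraph-universal target). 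For each $G \in \mathcal{G}^\star$, the embedding $\phi_G$ maps $V(G)$ into $V(U')$ and preserves edges, so $U'$ contains $G$ as a subgraph; thus $U'$ contains at least $|\mathcal{G}|/\binom{|V(U)|}{t}$ graphs of $\mathcal{G}$ as subgraphs. Finally, $S^\star$ is a $t$-clique contained in $U - V(U')$, and since $S^\star = S_G$ for (at least one, in fact every) $G \in \mathcal{G}^\star$, the adjacency condition from $G^{+t}$ guarantees that every vertex of $S^\star$ is adjacent in $U$ to every vertex of $\phi_G(V(G))$; as this must hold simultaneously for all $G \in \mathcal{G}^\star$ and these images together with $S^\star$ all live inside $U$, the vertices of $S^\star$ are adjacent to all vertices of $U'$ that are used by some embedding. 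After possibly restricting $U'$ to the union of the vertex sets actually used, $S^\star$ becomes fully adjacent to $V(U')$, as required.

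There is no serious obstacle here; the only point requiring a little care is the bookkeeping around which vertices of $U'$ the clique $S^\star$ must dominate. Strictly speaking the conclusion only needs $S^\star$ to dominate the vertices of $U'$ that participate in the recorded embeddings, so one should either take $U'$ to be the union of the images $\phi_G(V(G))$ over $G \in \mathcal{G}^\star$ (which is a subgraph of $U - S^\star$), or simply note that discarding the unused vertices of $U - S^\star$ changes nothing in the count. Either way the statement follows, and the argument is a pure counting/pigeonhole step.
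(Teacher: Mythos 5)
Your proof is correct and uses essentially the same pigeonhole argument as the paper: assign to each $G\in\mathcal{G}$ the image $S_G$ of its $t$ universal vertices under a fixed embedding of $G^{+t}$, average over the $\binom{|V(U)|}{t}$ possible $t$-sets, and take $U'$ on the ``good'' side of the popular set $S^\star$. The only cosmetic difference is that the paper defines $U'$ directly as the subgraph induced on the common neighbourhood of $S^\star$ (which makes the domination condition immediate), whereas you reach the same conclusion by restricting to the union of the embedded images---both choices work.
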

\begin{proof}
For each graph $G\in \mathcal{G}$, consider a copy $H$ of $G^{+t}$ in $U$ and more particularly let
$S_G\subseteq V(U)$ be a set of $t$ universal vertices of $H$. By the pigeonhole principle, there exists a $t$-element vertex subset $S^*\subseteq V(U)$ such that $S^*=S_G$ for at least $|\mathcal{G}|/{\binom{|V(U)|}{t}}$ graphs $G\in \mathcal{G}$.

Let $U'$ denote the subgraph of $U$ induced by the vertices that are adjacent to all vertices of $S^*$ (in particular, $V(U')$ is disjoint from $S^*$). For each graph 
$G\in \mathcal{G}$ such that $S^*=S_G$, $S^*$ is adjacent to all the vertices of a copy of $G$ in $U$, and thus $U'$ contains $G$
as a subgraph, as desired.
\end{proof}

We now prove a version of \Cref{cor:lmtw} which holds even if the subgraph-universal graphs are only required to exclude $K_{t+2}$ as a minor.

\begin{theorem}\label{thm:pw_exact_lower_bound}
Let $n>  t\geq 2$ be integers.
If a $K_{t+2}$-minor-free graph $U$ contains every $n$-vertex graph of pathwidth at most $t$ as subgraph, then $U$ has at least $2^{(n-t-4)/t}$ vertices.
\end{theorem}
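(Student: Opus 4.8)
The plan is to apply the universal vertex removal lemma (\cref{lem:universal_vx_removal_lemma}) so as to peel off exactly $t-2$ universal vertices. This brings us down to the treewidth-$2$ setting, where \cref{lem:pw2_lb} applies and can be combined with the counting bound \cref{lem:num_simple_pwk_graphs}. All the needed tools are already in place, so the argument should be short.

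First I would dispose of the trivial range. If $n\le t+4$, then $2^{(n-t-4)/t}\le 1\le |V(U)|$ and there is nothing to prove, so from now on assume $n\ge t+5$ and put $m:=n-t+2\ge 7$. For every simple $2$-path $G$ on $m$ vertices, the graph $G^{+(t-2)}$ obtained by adding $t-2$ universal vertices has exactly $n$ vertices and pathwidth at most $2+(t-2)=t$, since inserting the $t-2$ new vertices into every bag of a width-$2$ path-decomposition of $G$ yields a path-decomposition of $G^{+(t-2)}$ of width at most $t$. Writing $\mathcal{G}:=\mathcal{S}_{m,2}$ for the set of all $m$-vertex simple $2$-paths, it follows that $U$ contains every graph of $\mathcal{G}^{+(t-2)}$ as a subgraph.

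Next I would invoke \cref{lem:universal_vx_removal_lemma} with its parameter replaced by $t-2$. Setting $N:=|V(U)|$, this produces a subgraph $U'$ of $U$ that contains at least $|\mathcal{G}|/\binom{N}{t-2}$ members of $\mathcal{G}$ as subgraphs, together with a $(t-2)$-clique $S^{*}$ in $U-V(U')$ all of whose vertices are adjacent to every vertex of $U'$. The crucial point is that $U'$ is $K_4$-minor-free: a $K_4$-minor in $U'$ with branch sets $B_1,\dots,B_4\subseteq V(U')$ would extend, by adjoining the $t-2$ singleton branch sets $\{s\}$ for $s\in S^{*}$, to a $K_{t+2}$-minor of $U$ (using that $S^{*}$ is a clique and is complete to $V(U')$), contradicting that $U$ is $K_{t+2}$-minor-free. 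Hence $\tw(U')\le 2$; moreover $U'$ contains at least one $m$-vertex simple $2$-path, in particular a triangle, so $\tw(U')=2$ and \cref{lem:pw2_lb} applies to $U'$ with $k=2$ (and $m$ vertices).

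Finally I would combine the bounds. By \cref{lem:pw2_lb} applied to $U'$ and the set $\mathcal{S}\subseteq\mathcal{S}_{m,2}$ of $m$-vertex simple $2$-paths contained in $U'$, and then by \cref{lem:num_simple_pwk_graphs} (valid since $m\ge 6$),
\[
|V(U')|^{2}\;\ge\;|\mathcal{S}|\;\ge\;\frac{|\mathcal{G}|}{\binom{N}{t-2}}\;\ge\;\frac{2^{m-6}}{\binom{N}{t-2}}\;=\;\frac{2^{\,n-t-4}}{\binom{N}{t-2}}.
\]
Since $|V(U')|\le N$ and $\binom{N}{t-2}\le N^{t-2}$, this rearranges to $N^{t}=N^{2}\cdot N^{t-2}\ge N^{2}\binom{N}{t-2}\ge 2^{\,n-t-4}$, hence $N\ge 2^{(n-t-4)/t}$, as desired. (When $t=2$ this degenerates gracefully: $S^{*}=\emptyset$, $U'=U$, and \cref{lem:pw2_lb} directly gives $N^{2}\ge 2^{\,n-6}$.) I do not expect a genuine obstacle here; the only real decision is how many universal vertices to remove. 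Removing $t-1$ of them would make $U'$ a forest, which is too restrictive for the simple-$2$-path counting to say anything, whereas removing fewer than $t-2$ leaves $U'$ only $K_{r}$-minor-free for some $r\ge 5$, for which no analogue of \cref{lem:pw2_lb} is available. So the mild subtlety is simply to see that $t-2$ is the right count, and then to track the $K_4$-minor-freeness of $U'$ and the $\binom{N}{t-2}$ loss carefully.
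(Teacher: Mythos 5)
Your proof is correct and follows essentially the same route as the paper's: peel off $t-2$ universal vertices via \cref{lem:universal_vx_removal_lemma}, observe that the resulting $U'$ must be $K_4$-minor-free (hence treewidth at most $2$), and combine \cref{lem:pw2_lb} with \cref{lem:num_simple_pwk_graphs} to rearrange into $N^t\ge 2^{n-t-4}$. Your handling of the small-$n$ regime (assuming $n\ge t+5$ so that $m\ge 7$) is in fact slightly cleaner than the paper's, which only disposes of $n\le 5$ and so does not explicitly justify the hypothesis $m\ge 6$ needed for \cref{lem:num_simple_pwk_graphs} when $t$ is large relative to $n$.
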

\begin{proof}
For $2\leq t< n\leq 5$, we have $2^{\frac1{t}(n-t)}\leq 2^{\frac12(5-2)}\leq 3\leq n$, and since $U$ needs to contain $n$ vertices to contain an $n$-vertex graph, the lower bound certainly holds. So we may henceforth assume $n\geq 6$.

Suppose that the graph $U$ is $K_{t+2}$-minor-free and contains every $n$-vertex graph of pathwidth at most $t$ as subgraph. Let $\mathcal{G}=\mathcal{S}_{n-t+2,2}$. That is, $\mathcal{G}$ contains all simple 2-paths on $n-t+2$ vertices.
For each graph from $\mathcal{G}$, adding $t-2$ universal vertices gives an $n$-vertex graph of pathwidth at most $t$, which is a subgraph of $U$. So by \cref{lem:universal_vx_removal_lemma} (applied with $t-2, ~U$ and $\mathcal{G}$), $U$ has a subgraph $U'$ which contains all graphs from a subclass $\mathcal{G}'\subseteq\mathcal{G}$ as subgraph, where 
\[
|\mathcal{G}'|\geq \frac{|\mathcal{G}|}{\binom{|V(U)|}{t-2}}\ge  \frac{|\mathcal{G}|}{|V(U)|^{t-2}},
\]
such that $U$ also has $t-2$ vertices complete to $U'$. In particular, since $U$ has no $K_{t+2}$-minor, $U'$ has no $K_4$-minor and so has treewidth at most $2$. 
We apply  \cref{lem:pw2_lb}, the displayed equation above and  \cref{lem:num_simple_pwk_graphs} to see that 
\[
|V(U)|^2\geq |V(U')|^2\geq |\mathcal{G}'|\geq |\mathcal{G}|/|V(U)|^{t-2} \geq 2^{(n-t+2-6)}/|V(U)|^{t-2}.
\]
By rearranging this inequality, we get 
$|V(U)|^t\geq 2^{n-t-4}$ and thus $|V(U)|\geq 2^{(n-t-4)/t}$.
\end{proof}
Since every graph of pathwidth at most $t$ is $K_{t+2}$-minor-free, the following result, which was suggested in \cite{BILOSW24}, is immediate from the theorem.
\begin{corollary}
\label{lem:cor_kt_lowerbound}
    For any integer $t\geq 4$,
faithful subgraph-universal graphs for the class of $K_t$-minor-free graphs have order  at least $n\mapsto 2^{(n-t-2)/(t-2)}$.
\end{corollary}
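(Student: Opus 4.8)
The plan is to deduce \cref{lem:cor_kt_lowerbound} directly from \cref{thm:pw_exact_lower_bound} via the right choice of parameters. By definition, a faithful subgraph-universal graph for the class of $K_t$-minor-free graphs is, for each $n$, a $K_t$-minor-free graph $U=U_n$ that contains every $n$-vertex $K_t$-minor-free graph as a subgraph. The one structural fact we need is the standard inclusion: every graph of pathwidth at most $s$ has no $K_{s+2}$-minor. Taking $s=t-2$, every $n$-vertex graph of pathwidth at most $t-2$ is $K_t$-minor-free, hence a subgraph of $U$.

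Next I would apply \cref{thm:pw_exact_lower_bound} with $t$ replaced by $t':=t-2$. Since $t\ge 4$ we have $t'\ge 2$ and $K_{t'+2}=K_t$, so $U$ is $K_{t'+2}$-minor-free and contains every $n$-vertex graph of pathwidth at most $t'$ as a subgraph; thus the hypotheses of \cref{thm:pw_exact_lower_bound} hold as soon as $n>t'$, i.e.\ $n\ge t-1$. The conclusion gives
\[
|V(U)|\;\ge\;2^{(n-t'-4)/t'}\;=\;2^{(n-(t-2)-4)/(t-2)}\;=\;2^{(n-t-2)/(t-2)},
\]
which is precisely the asserted bound.

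Finally, one should dispose of the remaining small values $n\le t-2$ (to which \cref{thm:pw_exact_lower_bound} does not apply): there $(n-t-2)/(t-2)<0$, so $2^{(n-t-2)/(t-2)}<1$ and the bound holds trivially since any universal graph has at least one vertex. This completes the deduction. There is essentially no obstacle here: the whole argument is the substitution $t\mapsto t-2$ combined with ``pathwidth $\le t-2$ $\Rightarrow$ $K_t$-minor-free''; the only points deserving a line of care are verifying the hypothesis $n>t-2$ of \cref{thm:pw_exact_lower_bound} and the trivial small-$n$ regime.
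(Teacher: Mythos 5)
Your deduction is correct and is exactly the paper's argument: the corollary is obtained from \cref{thm:pw_exact_lower_bound} by substituting $t-2$ for $t$ and using that graphs of pathwidth at most $t-2$ are $K_t$-minor-free, yielding the bound $2^{(n-t-2)/(t-2)}$. Your extra care about the hypothesis $n>t-2$ and the trivial small-$n$ regime is fine but not a substantive difference.
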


Since simple 3-paths are planar, the proof of \Cref{thm:pw_exact_lower_bound} also immediately implies the following slightly stronger version of \Cref{lem:cor_kt_lowerbound}, when $t=5$.

\begin{corollary}
\label{lem:cor_planar_k5_lowerbound}
    Any $K_5$-minor-free graph containing all $n$-vertex planar graphs as subgraph has  order  at least $n\mapsto 2^{(n-7)/3}$.
\end{corollary}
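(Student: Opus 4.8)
The plan is to observe that the proof of \Cref{thm:pw_exact_lower_bound} in the case $t=3$ already yields this statement, once one notices that every pathwidth-$\le 3$ graph used in that proof is planar. I spell out the (short) argument below.

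Assume $U$ is $K_5$-minor-free and contains every $n$-vertex planar graph as a subgraph; for $n\le 6$ the claimed bound is smaller than $1$ and thus trivial, so assume $n\ge 7$. Let $\mathcal{G}=\mathcal{S}_{n-1,2}$ be the set of simple $2$-paths on $n-1$ vertices. Each $G\in\mathcal{G}$ is (maximal) outerplanar, so $G^{+1}$ is planar---indeed $G^{+1}$ is a simple $3$-path, which is the content of the sentence ``simple $3$-paths are planar'' preceding the statement. Hence $U$ contains every graph of $\mathcal{G}^{+1}$ as a subgraph. Applying the universal vertex removal lemma (\Cref{lem:universal_vx_removal_lemma}) with parameter $1$ produces a subgraph $U'$ of $U$ containing at least $|\mathcal{G}|/\binom{|V(U)|}{1}=|\mathcal{G}|/|V(U)|$ graphs of $\mathcal{G}$ as subgraphs, together with a vertex in $U-V(U')$ adjacent to every vertex of $U'$. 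Since $U$ has no $K_5$-minor, $U'$ has no $K_4$-minor (a $K_4$-minor in $U'$ together with that extra vertex would be a $K_5$-minor of $U$), so $\tw(U')\le 2$.

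Now \Cref{lem:pw2_lb} (applicable since $n-1\ge 6$) gives that $|V(U')|^2$ is at least the number of (non-isomorphic) simple $2$-paths on $n-1$ vertices contained in $U'$, hence $|V(U')|^2\ge |\mathcal{G}|/|V(U)|$, while \Cref{lem:num_simple_pwk_graphs} gives $|\mathcal{G}|=|\mathcal{S}_{n-1,2}|\ge 2^{(n-1)-2\cdot 2-2}=2^{n-7}$. Combining,
\[
  |V(U)|^{2}\ \ge\ |V(U')|^{2}\ \ge\ \frac{|\mathcal{G}|}{|V(U)|}\ \ge\ \frac{2^{\,n-7}}{|V(U)|},
\]
so $|V(U)|^{3}\ge 2^{\,n-7}$, i.e.\ $|V(U)|\ge 2^{(n-7)/3}$, as claimed.

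There is no genuine obstacle: the whole content is the remark that a simple $2$-path plus a universal vertex (equivalently, a simple $3$-path) is planar, which is immediate since an outerplanar graph together with a single apex vertex is planar. The only points requiring mild attention are the trivial small-$n$ regime and the index shifts when invoking \Cref{lem:num_simple_pwk_graphs,lem:pw2_lb} with parameter $k=2$ on $n-1$ vertices.
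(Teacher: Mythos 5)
Your proof is correct and is essentially the paper's own argument: the paper derives this corollary by observing that the graphs used in the proof of \cref{thm:pw_exact_lower_bound} for $t=3$ (simple $2$-paths on $n-1$ vertices plus one universal vertex, i.e.\ simple $3$-paths) are planar, and you have simply written out that specialization, with the same invocation of \cref{lem:universal_vx_removal_lemma}, \cref{lem:pw2_lb} and \cref{lem:num_simple_pwk_graphs} and the same computation $|V(U)|^3\ge 2^{n-7}$.
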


\section{Exponential lower bound for near-faithful minor-free universal graphs}\label{sec:lbgrid}

The main results in this section is \Cref{thm:Kt free main}, which gives a lower bound of $2^{\Omega_t(n)}$ on the order of a $K_t$-minor-free graph containing all planar $n$-vertex graphs as subgraph. In this result, the implicit multiplicative constant in the exponent is a polynomial in $t$.

\subsection{\texorpdfstring{$K_t$}{Kt}-minors in grid with jumps}

This subsection aims to prove  \cref{Kt double grid one interior bis} stated below. Informally, this lemma allows us to construct clique minors in a planar triangulation with a few extra edges.

Let us first introduce some notation. 
For every positive integers $a,b$, the \emph{$a \times b$ grid} is the graph with vertex set $[a] \times [b]$ and edge set $\{(x,y)(x,y+1) \mid x \in [a], y \in [b-1]\} \cup \{(x,y)(x+1,y) \mid x \in [a-1], y \in [b]\}$.
For every $i \in [b]$, the $i^{\text{th}}$ row of this grid is the set of vertices $\{(x,i) \mid x \in [a]\}$,
and for every $j \in [a]$, the $j^{\text{th}}$ column of this grid is the set $\{(j,y) \mid y \in [b]\}$.\footnote{The coordinates are chosen so that rows and columns can be imagined in an $xy-$coordinate system.}

\smallskip

A \emph{triangulated $a \times b$ grid} is a spanning supergraph $G$ of the $a \times b$ grid 
such that for every $x \in [a-1], y \in [b-1]$, 
exactly one of the pairs $(x,y)(x+1,y+1)$ and $(x+1,y)(x,y+1)$ is an edge of $G$. 

\smallskip

If $G$ is a triangulated $a\times b$ grid, we write $R_i(G)=\{(x,i):x\in [a]\}$ for the vertices in the $i$th row and $C_j(G)=\{(j,y):y\in [b]\}$ for vertices in the $j$th column. 
When $G$ is clear from the context, we will simply write $R_i$ for $R_i(G)$ and $C_j$ for $C_j(G)$.

\medskip

A \emph{jump}\footnote{Note that our definition of a jump is slightly different from the definition of a jump in \cite{huynh2021universality}, but the spirit is the same.} in a graph $G$ is a non-edge in $G$ (i.e., a pair of non-adjacent vertices). 
Our main goal is to show that adding the edges corresponding to sufficiently many jumps to a triangulated grid will create a large complete minor. 
Since additional jumps near the boundary of the grid are less helpful, we will only be interested in jumps between vertices that are not too close to the boundary. 

For integers $i,t,\ell,\ell'$, we call row $R_i$ of a triangulated $\ell\times \ell'$ grid \textit{$t$-internal} if $i\in \{t+1,t+2,\dots,\ell'-t\}$. Similarly, we call column $C_j$ \textit{$t$-internal} if $j\in \{t+1,t+2,\dots,\ell-t\}$. We call a vertex $v=(x,y)$ \textit{$t$-internal} if its column and row are $t$-internal.

 \begin{figure}[htb]
    \centering
    \includegraphics[scale=1]{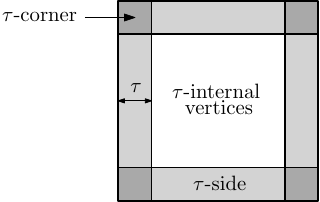}
    \caption{Dividing a grid into four corners, four sides, and internal vertices.}
    \label{fig:regions}
\end{figure}

\medskip

For an integer $\tau < \ell/2$, it will be convenient to divide each $\ell\times \ell$ grid $G$ into 9 regions (see Figure \ref{fig:regions} for an illustration): 
\begin{itemize}
\item the $\tau$-internal vertices,
    \item the four $\tau$-corners, consisting of the vertices at distance at most $\tau$ from a horizontal edge of the boundary and from a vertical edge of the boundary, and
    \item the four $\tau$-sides, consisting of the vertices at distance at most $\tau$ from the boundary, but which are not located in a $\tau$-corner.
    \end{itemize}

Note that we consider that these definitions also apply to triangulated grids (but the distance function is that of the grid, not the triangulated grid, so that the regions are the same as that described above regardless of the additional edges).

\medskip

Given a triangulated grid $G$ and a set of jumps $M$, we denote by $G\cup M$ the graph with vertex set $V(G)$ and edge set $E(G)\cup M$. 
We will need the following folklore lemma, whose proof is given in \Cref{sec:app} for  completeness.

\begin{restatable}{lemma}{ktgridoneinteriorbis}
    \label{Kt grid one interior bis}
    There is a polynomial function $f_{\ref{Kt grid one interior bis}}\colon \mathbb{N}_{>0} \to \mathbb{N}_{>0}$ such that the following holds.
    Let $t$ be a positive integer,
    let $\ell,\ell'$ be integers with $\ell,\ell'\geq 2f_{\ref{Kt grid one interior bis}}(t)$. Let $G$ be a triangulated $\ell \times \ell'$ grid.
    For every set $M$ of pairwise disjoint jumps of $G$,
    if
    \begin{enumerate}
        \item for every $uv \in M$, $u$ or $v$ is $f_{\ref{Kt grid one interior bis}}(t)$-internal; and
        \item $|M| \geq f_{\ref{Kt grid one interior bis}}(t)$;
    \end{enumerate}
    then
    $K_t$ is a minor of $G \cup M$.
\end{restatable}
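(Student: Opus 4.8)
\textbf{Proof plan for Lemma~\ref{Kt grid one interior bis}.}
The plan is to build a $K_t$-minor by designating a family of $t$ pairwise disjoint connected branch sets in $G \cup M$ and verifying that every pair of them is joined by an edge. The key observation is that a triangulated $\ell \times \ell'$ grid contains, for any choice of disjoint ``rectangular'' sub-grids, a system of disjoint connected subgraphs that can be linked up along the grid structure; the jumps in $M$ then provide the ``extra'' adjacencies that a planar grid could not supply on its own. So the first step is to set $f_{\ref{Kt grid one interior bis}}(t)$ to be a suitable polynomial in $t$ (something like $\Theta(t^2)$ will be comfortable) and to reserve a large internal rectangular region $Q$ of the grid, far enough from the boundary that every jump in $M$ has at least one endpoint inside $Q$ and that $Q$ itself still has side lengths $\Omega(t^2)$.

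Next I would exploit the jumps. Since $|M| \ge f_{\ref{Kt grid one interior bis}}(t)$ and the jumps are pairwise disjoint, we have $\Omega(t^2)$ disjoint pairs $u_iv_i$ with (say) $u_i$ being $f_{\ref{Kt grid one interior bis}}(t)$-internal. The idea is to route $t$ pairwise vertex-disjoint paths $P_1,\dots,P_t$ through the grid so that each $P_a$ passes near many of the internal endpoints $u_i$, and to group the jumps so that for every pair $\{a,b\}$ there is at least one jump $u_iv_i$ with $u_i$ adjacent to (or absorbable into) $P_a$ and $v_i$ adjacent to (or absorbable into) $P_b$. Concretely: within the internal rectangle, lay down $t$ long horizontal ``highway'' paths, well separated vertically; then use short vertical connectors to attach each internal jump endpoint to its highway, and route the corresponding other endpoint to the appropriate second highway. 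This is the standard technique for realizing a clique minor in a grid-like graph once one has enough ``long-range'' edges: a grid of side $\Omega(t^2)$ has enough room to route $\binom{t}{2}$ disjoint connecting wires because each wire only needs to go from one of $t$ horizontal tracks to another, and the triangulation edges only help. The branch set $B_a$ is then the union of highway $a$ together with all the short connectors and jump-paths assigned to pairs involving $a$; one checks these are connected and pairwise disjoint by construction, and $B_a, B_b$ are adjacent via the jump reserved for $\{a,b\}$.

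The main obstacle, and the reason the boundary hypothesis is needed, is the routing/planarity bookkeeping: a jump whose $u_i$ lies close to the boundary cannot be reached by a highway lying safely in the interior without either leaving the grid or colliding with other branch sets, and a pair of jumps near the boundary could only create a ``vortex''-type structure rather than a genuine $K_t$ (this is exactly the phenomenon flagged in the introduction). The hypothesis that each jump has an $f_{\ref{Kt grid one interior bis}}(t)$-internal endpoint, combined with choosing $f_{\ref{Kt grid one interior bis}}(t)$ large enough that there is slack between the internal region and the boundary, is what lets us always anchor each wire to a well-separated horizontal track and keep all $\binom t2$ wires disjoint. So the bulk of the real work is: (i) a counting argument showing that among $\Omega(t^2)$ disjoint jumps with internal endpoints we can select $\binom{t}{2}$ of them whose internal endpoints are ``spread out'' enough (e.g.\ in distinct rows, or far apart), possibly after pigeonholing on rows or columns; and (ii) an explicit but routine planar routing of the $t$ highways and the $\binom t2$ connecting wires inside a rectangle of polynomial side length, using the triangulation edges freely. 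I would carry these out in that order, treating (ii) as a careful-but-mechanical drawing argument and isolating the genuinely delicate step as the pigeonhole in (i) together with the choice of the constant $f_{\ref{Kt grid one interior bis}}(t)$. Since the lemma only asserts existence of a polynomial $f_{\ref{Kt grid one interior bis}}$, I would not optimize the constant, and would simply take $f_{\ref{Kt grid one interior bis}}(t)$ to be a generous polynomial such as $100t^2$.
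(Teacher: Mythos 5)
Your plan has a genuine gap that is not a matter of routine bookkeeping. The core move in your sketch is to lay down $t$ interior ``highway'' tracks and route each jump so that its two endpoints land on two designated highways, with the confident claim that a grid of side $\Omega(t^2)$ has enough room to route $\binom{t}{2}$ disjoint connectors. But you do not control where the jump endpoints are: only one endpoint of each jump is assumed internal, and even the internal endpoints can all be packed into a tiny $O(t)\times O(t)$ square, in which case the ``short vertical connectors'' to $t$ separated highways must collide. Symmetrically, the non-internal endpoints can all sit near a single corner of the boundary, so the wires leaving the interior to reach them would also collide. The assertion that one can always select $\binom{t}{2}$ jumps whose endpoints are spread out enough to wire pairwise across $t$ horizontal tracks is exactly the heart of the difficulty; it is not mechanical planar drawing, and it is false as stated in the adversarial regime the lemma must handle.

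The paper's proof handles this by a dichotomy your sketch does not foresee: it splits the jumps by span. When a jump's two endpoints are close (within $O(t^2)$ in grid distance), highway routing is hopeless because the jump gives no long-range reach; the paper instead contracts along a coarse subgrid of rows and columns (using the Seymour--Thomassen $2$-linkage theorem inside small cells) to turn such jumps into \emph{local} crossings of a smaller triangulated grid, and then invokes a quantitative crossing lemma (\cref{Kt grid middle line crossings bis}, from \cite{Kawarabayashi2018}). When a jump's endpoints are far apart, a separate black-box result (\cref{lem:nonslocal}, also from \cite{Kawarabayashi2018}) is used after thinning so that jump endpoints are mutually far apart. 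Both of these ingredients encapsulate serious combinatorial routing arguments. In short, your ``(ii) careful-but-mechanical drawing argument'' is where the real lemma lives, your scheme breaks on local jumps and on clustered non-internal endpoints, and the proof needs the short/long case split plus the cited technical lemmas rather than a single highway construction.
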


Consider two disjoint triangulated $\ell\times \ell$ grids $G_1$ and $G_2$, and let $H$ be a planar triangulation obtained by 
\begin{enumerate}
    \item adding an edge between the vertex $(i,j)$ of $G_1$ and the vertex $(i,j)$ of $G_2$, for any pair $i,j$ such that $(i,j)$ lies on the outerface of $G_1$ (and equivalently $G_2$).
    \item adding an edge in each quadrangular face of the resulting plane graph (all such faces include boundary vertices of $G_1$ and $G_2$).
\end{enumerate} The graph $H$, which is a planar triangulation on $2\ell^2$ vertices, is called a \emph{triangulated $\ell\times \ell$ double grid}. See Figure \ref{fig:doublegrid} for an illustration with $\ell=3$.

\begin{figure}[htb]
    \centering
    \includegraphics[scale=1.8]{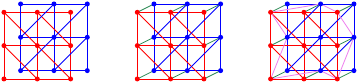}
    \caption{A triangulated $3\times 3$ double grid (right) obtained from two disjoint triangulated $3\times 3$ grids (left) by first adding a perfect matching between their boundaries (center) and then triangulating the 8 quadrangular faces created in the process. We view the resulting planar triangulation as being embedded on the sphere (or rather a cube).}
    \label{fig:doublegrid}
\end{figure}

We now use \Cref{Kt grid one interior bis} to prove the following variant for triangulated double grids, where the endpoint of the jumps are not required to be internal.

\begin{lemma}
    \label{Kt double grid one interior bis}
    There is a polynomial function $f_{\ref{Kt double grid one interior bis}}\colon \mathbb{N}_{>0} \to \mathbb{N}_{>0}$ such that the following holds.
    Let $t$ be a positive integer and
    let $\ell$ be an integer with $\ell\geq 2f_{\ref{Kt double grid one interior bis}}(t)$. Let $H$ be a triangulated $\ell\times \ell$ double grid.
    For every set $M$ of pairwise disjoint jumps of $H$,
    if $|M| \geq f_{\ref{Kt double grid one interior bis}}(t)$,
    then
    $K_t$ is a minor of $H\cup M$.
\end{lemma}

\begin{proof}
Let $\tau=f_{\ref{Kt grid one interior bis}}(t)$ and set $f_{\ref{Kt double grid one interior bis}}(t)=8\tau^2+4\tau$. 

Let $G_1$ and $G_2$ be the two triangulated $\ell\times \ell$ grids forming the triangulated $\ell\times \ell$ double grid~$H$. Note that in the unique embedding of $H$ in the sphere (up to reflection),  there is a natural cyclic ordering $e_1,\ldots,e_{8\ell-7}=e_1$
on the $8\ell-8$ edges connecting $G_1$ to $G_2$ in $H$. We let $e_1$ be the edge connecting the two vertices of coordinates $(1,1)$ in $G_1$ and $G_2$. We now define four spanning subgraphs $H_1,\ldots,H_4$ of $H$. For each $1\le i\le 4$, we let $H_i$ be the subgraph obtained from $H$ be deleting all edges $e_j$, with $j\in [2(i-1)(\ell-1)+1,2(i+2)(\ell-1)]$, where integers are considered modulo $8\ell-8$. See Figure \ref{fig:Hi} for an illustration.

\begin{figure}[htb]
    \centering
    \includegraphics[scale=1]{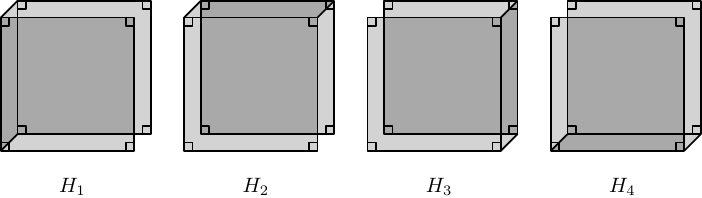}
    \caption{The support of the four graph $H_1,\ldots,H_4$. Every vertex which is not in a corner is far from the boundary in at least one of these four graphs.}
    \label{fig:Hi}
\end{figure}

Note that each $H_i$ is a triangulated $\ell \times 2\ell$ grid. Moreover,  each vertex $v$ of $H$ is either in a $\tau$-corner of $G_1$ or $G_2$, or $v$ is $\tau$-internal in some $H_i$ for $1\le i\le 4$. Observe that each $\tau$-corner of $G_1$ or $G_2$ contains at most $\tau^2$ vertices, and since all jumps from $M$ are disjoint, at most $8\tau^2$ jumps from $M$ have an endpoint in a $\tau$-corner of $G_1$ or $G_2$. It follows that at least $f_{\ref{Kt double grid one interior bis}}(t)-8\tau^2=4\tau$ jumps of $M$ do not have any endpoint in a $\tau$-corner, and are thus $\tau$-internal in some $H_i$. By the pigeonhole principle, there is $1\le i\le 4$ and a subset $M'\subseteq M$ of size at least $\tau$ such that each jump of $M'$ has an endpoint which is $\tau$-internal in $H_i$. By  \Cref{Kt grid one interior bis}, $H_i\cup M'$ (and thus $H\cup M$) contains $K_t$ as a minor.
\end{proof}

\subsection{Creating the jumps}
Together with \Cref{Kt double grid one interior bis} above, we will need the following two classical results.

\begin{lemma}[\cite{Kos84,Tho84}]\label{lemma:density Kt minor-free}
    There is a polynomial function $f_{\ref{lemma:density Kt minor-free}} \colon \mathbb{N}_{>0} \to \mathbb{N}_{>0}$ such that the following holds.
    Let $t$ be a positive integer.
    For every graph $G$,
    if $K_t$ is not a minor of $G$, then
    \[
    |E(G)| \leq f_{\ref{lemma:density Kt minor-free}}(t) |V(G)|.
    \]
\end{lemma}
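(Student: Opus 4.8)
This lemma is exactly the Kostochka--Thomason theorem, and the plan is to cite it. Their result gives an absolute constant $c>0$ such that every $K_t$-minor-free graph $G$ satisfies $|E(G)|\le c\,t\sqrt{\log t}\,|V(G)|$ \cite{Kos84,Tho84}. Since only \emph{some} polynomial $f_{\ref{lemma:density Kt minor-free}}$ is needed here, we may take $f_{\ref{lemma:density Kt minor-free}}$ to be any integer-valued polynomial that dominates $c\,t\sqrt{\log(t+2)}$ --- for instance $t\mapsto t^2$ works for all $t\ge 1$, using the displayed inequality once $t$ exceeds an absolute threshold, and the classical extremal bounds for small excluded clique minors below it (e.g.\ forests for $t=3$, and $|E(G)|\le(t-2)|V(G)|$ for $t\le 7$).

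If one prefers a self-contained argument, it suffices to prove the weaker bound $f_{\ref{lemma:density Kt minor-free}}(t)=O(t^2)$ through topological minors. If $|E(G)|>d\,|V(G)|$ then $G$ is not $d$-degenerate, hence has a subgraph of minimum degree at least $d+1$; and by the theorem of Bollob\'as and Thomason (and, independently, Koml\'os and Szemer\'edi), there is a constant $\beta>0$ such that minimum degree at least $\beta s^2$ forces a subdivision of $K_s$, which is in particular a $K_s$ minor. Combining these with $d=\lceil\beta t^2\rceil$ and $s=t$ shows that any $K_t$-minor-free graph $G$ has $|E(G)|\le\lceil\beta t^2\rceil\,|V(G)|$, so $f_{\ref{lemma:density Kt minor-free}}(t)=\lceil\beta t^2\rceil$ works.

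The point worth stressing is what does \emph{not} suffice: Mader's elementary bound, that minimum degree at least $2^{t-2}$ forces a $K_t$ minor, has a short inductive proof but only yields $f_{\ref{lemma:density Kt minor-free}}(t)=2^{\Theta(t)}$, which would destroy the polynomial dependence on $t$ in the downstream results of Section~\ref{sec:lbgrid} (notably \Cref{thm:Kt free main}). So the real content of the lemma is the Kostochka--Thomason (or Bollob\'as--Thomason) estimate; there is no genuinely short fully elementary proof of a polynomial bound, and the only remaining step is the routine packaging of the known bound as an integer-valued polynomial, which is immediate.
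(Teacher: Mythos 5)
Your proposal is correct and matches the paper, which simply cites the Kostochka--Thomason edge bound $O(t\sqrt{\log t})\,|V(G)|$ for $K_t$-minor-free graphs and absorbs the constant into a polynomial $f_{\ref{lemma:density Kt minor-free}}(t)$. Your added remarks (the Bollob\'as--Thomason fallback and the observation that Mader's exponential bound would ruin the polynomial dependence in \Cref{thm:Kt free main}) are accurate but not needed beyond the citation.
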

Given a set $X$, we write $2^X$ for the power set of $X$.
Let $\mathcal{F}\subseteq 2^{X}$ be a given set system on the ground set $X$. The \textit{VC-dimension} of $\mathcal{F}$ is the supremum taken over the integers $d\geq 0$ for which there is a set $S\subseteq X$ of size $d$ which is \emph{shattered}, that is such that $\{Y\cap S:Y\in \mathcal{F}\}=2^S$.
\begin{lemma}[Sauer-Shelah \cite{Sau72,She72}]
\label{lem:SauerShelah}
If the VC-dimension of $\mathcal{F}\subseteq 2^{[n]}$ is at most $d$, then $|\mathcal{F}|\leq \sum_{i=0}^d \binom{n}i
\leq  2n^d$. 
\end{lemma}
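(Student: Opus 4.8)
The plan is to establish the sharper combinatorial inequality $|\mathcal{F}|\le f(n,d)$ where $f(n,d):=\sum_{i=0}^d\binom{n}{i}$, by induction on $n$, and then deduce $f(n,d)\le 2n^d$ by a one-line estimate. For the base cases: if $n=0$ then $\mathcal{F}\subseteq 2^\emptyset=\{\emptyset\}$ has at most one member, and $f(0,d)\ge 1$; if $d=0$ then no singleton $\{x\}$ is shattered, which means that for each $x\in[n]$ either every member of $\mathcal{F}$ contains $x$ or none does, forcing all members of $\mathcal{F}$ to coincide, so again $|\mathcal{F}|\le 1=f(n,0)$.

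For the inductive step I would fix a ground element $x\in[n]$, put $X'=[n]\setminus\{x\}$, and introduce two set systems on $X'$: the projection $\mathcal{F}_1:=\{Y\setminus\{x\}:Y\in\mathcal{F}\}$ and the ``doubled trace'' $\mathcal{F}_2:=\{Y\subseteq X':Y\in\mathcal{F}\text{ and }Y\cup\{x\}\in\mathcal{F}\}$. The first observation is the identity $|\mathcal{F}|=|\mathcal{F}_1|+|\mathcal{F}_2|$: the surjection $Y\mapsto Y\setminus\{x\}$ from $\mathcal{F}$ to $\mathcal{F}_1$ has a size-$2$ fibre over exactly those $Z$ with $\{Z,Z\cup\{x\}\}\subseteq\mathcal{F}$ — that is, over the members of $\mathcal{F}_2\subseteq\mathcal{F}_1$ — and size-$1$ fibres elsewhere, so the total count exceeds $|\mathcal{F}_1|$ by exactly $|\mathcal{F}_2|$.

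The crucial step is to bound the VC-dimensions of the two new systems. Any $S\subseteq X'$ shattered by $\mathcal{F}_1$ is already shattered by $\mathcal{F}$, because $x\notin S$ and so $(Y\setminus\{x\})\cap S=Y\cap S$; hence $\mathrm{VCdim}(\mathcal{F}_1)\le d$. And if $S\subseteq X'$ is shattered by $\mathcal{F}_2$, then $S\cup\{x\}$ is shattered by $\mathcal{F}$: given any $A\subseteq S\cup\{x\}$, pick $Y\in\mathcal{F}_2$ with $Y\cap S=A\setminus\{x\}$; then $Y,Y\cup\{x\}\in\mathcal{F}$, and whichever of these two sets has $x$-membership matching that of $A$ meets $S\cup\{x\}$ in exactly $A$. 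Thus $|S|+1\le d$, i.e.\ $\mathrm{VCdim}(\mathcal{F}_2)\le d-1$. Applying the induction hypothesis on the $(n-1)$-element ground set $X'$ gives $|\mathcal{F}|\le f(n-1,d)+f(n-1,d-1)$, and Pascal's rule $\binom{n-1}{i}+\binom{n-1}{i-1}=\binom{n}{i}$ telescopes this to exactly $f(n,d)$, closing the induction.

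Finally, using $\binom{n}{i}\le n^i$, for $n\ge 2$ we get $f(n,d)\le\sum_{i=0}^d n^i=\tfrac{n^{d+1}-1}{n-1}\le\tfrac{n^{d+1}}{n-1}\le 2n^d$ (since $n\ge 2$ gives $n\le 2(n-1)$), while the cases $n\le 1$ are immediate. I expect the only delicate part to be the VC-dimension bookkeeping for $\mathcal{F}_2$ — in particular correctly re-inserting the coordinate $x$ when transporting a shattered set from $\mathcal{F}_2$ back to $\mathcal{F}$; everything else is routine. (An alternative is the down-shifting/compression proof, which replaces the recursion by a single monovariant on $\mathcal{F}$, but the inductive double count above is the most economical to write out here.)
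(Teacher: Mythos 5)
Your proof is correct: the decomposition $|\mathcal{F}|=|\mathcal{F}_1|+|\mathcal{F}_2|$, the VC-dimension bounds $\mathrm{VCdim}(\mathcal{F}_1)\le d$ and $\mathrm{VCdim}(\mathcal{F}_2)\le d-1$, the Pascal-rule telescoping, and the final estimate $\sum_{i=0}^d\binom{n}{i}\le 2n^d$ for $n\ge 1$ are all handled properly. Note, however, that the paper does not prove this lemma at all — it is quoted as the classical Sauer--Shelah lemma with citations to the original papers — so there is no in-paper argument to compare against; what you have written is the standard shifting-free inductive proof (essentially Sauer's original double count), and it is a perfectly good self-contained justification. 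The only pedantic caveat is that the closing inequality $\sum_{i=0}^d\binom{n}{i}\le 2n^d$ is false in the degenerate case $n=0$, $d\ge 1$; this is a defect of the statement as written rather than of your argument, and it is irrelevant to the paper's applications, where the ground set is an edge or vertex set of size at least $2$.
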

We will combine \Cref{Kt double grid one interior bis} with  \Cref{lemma:density Kt minor-free,lem:SauerShelah} above to prove the following result,
which informally says that a $K_t$-minor-free graph on $2\ell^2$ vertices 
cannot contain too many triangulated $\ell\times \ell$ double grids as spanning subgraph. Recall that given a graph $U$ and a subset $S$ of edges of $U$, $U[S]$ denotes the subgraph of $U$ whose vertex set is the set of endpoints of the edges in $S$, and whose edge set is $S$.

\begin{lemma}\label{lemma:only_a_few_grids_on_the_same_vertex_set}
    There is a polynomial function $f_{\ref{lemma:only_a_few_grids_on_the_same_vertex_set}} \colon \mathbb{N}_{>0} \to \mathbb{N}_{>0}$ such that the following holds.
    Let $t, \ell\ge 2$ be integers.
    Let $U$ be a $K_t$-minor-free graph on $2\ell^2$ vertices.
    Then there are at most     $\ell^{f_{\ref{lemma:only_a_few_grids_on_the_same_vertex_set}}(t)}$ edge sets $S\subseteq E(U)$ such that $U[S]$ is a spanning subgraph of $U$ isomorphic to a triangulated $\ell\times \ell$ double grid.
\end{lemma}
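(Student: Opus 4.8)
The plan is to show that two triangulated $\ell\times\ell$ grids on the same vertex set $V(U)$ that ``look the same'' on a suitably chosen small set of coordinates must in fact be equal, and then to count how many such coordinate-patterns are possible using the Sauer--Shelah lemma. Concretely, fix $U$ a $K_t$-minor-free graph on $\ell^2$ vertices, and let $\mathcal{S}$ be the collection of edge sets $S\subseteq E(U)$ for which $U[S]$ is a triangulated $\ell\times\ell$ grid. For each $S\in\mathcal{S}$ we get an embedding $\phi_S\colon [\ell]\times[\ell]\to V(U)$ of the grid; since the grid is spanning (it has $\ell^2$ vertices, the same as $U$), $\phi_S$ is a bijection. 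First I would record the key rigidity statement we are really after: if $S,S'\in\mathcal{S}$ and there is a set $W\subseteq V(U)$ of size $d = f_{\ref{Kt grid one interior bis}}(t)$ (or a small multiple thereof) such that $\phi_S^{-1}$ and $\phi_{S'}^{-1}$ agree on $W$ \emph{and} map $W$ to $f_{\ref{Kt grid one interior bis}}(t)$-internal vertices in both grids, then $S=S'$. This is proved by contradiction: if $S\neq S'$ then, thinking of the grid $S$ as the ``base'' and $S'$ as carrying extra structure, one extracts from a vertex where $S$ and $S'$ disagree a path in one grid that behaves like a jump in the other (exactly the mechanism described in the proof-technique overview of the introduction), and by choosing the shattered/controlled set appropriately one gets $f_{\ref{Kt grid one interior bis}}(t)$ pairwise disjoint jumps with an internal endpoint, hence a $K_t$-minor in $U$ by \cref{Kt grid one interior bis} — contradiction. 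This is the main obstacle and must be done with some care: one has to argue that disagreement on \emph{few} controlled coordinates already forces \emph{many} disjoint internal jumps, presumably by iterating the extraction and using \cref{lemma:density Kt minor-free} to guarantee there is enough room (few edges of $U$ means most potential jump-paths are genuine non-edges somewhere).

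Granting the rigidity statement, the counting is short. Consider the set system $\mathcal{F} = \{\, \phi_S^{-1}(W_0) : S\in\mathcal{S}\,\}\subseteq 2^{[\ell]\times[\ell]}$ where, for a fixed reference vertex set, we instead run the argument the other way: for each vertex $v\in V(U)$ record the family $\mathcal{F}_v = \{\, \phi_S^{-1}(v) : S\in\mathcal{S}\,\}\subseteq [\ell]\times[\ell]$ of possible grid-coordinates that $v$ can receive. The rigidity statement, applied with $W$ a singleton or a bounded set, implies that the VC-dimension of the associated set system on the ground set $[\ell]\times[\ell]$ (the $\ell^2$ coordinates) is at most $O_t(1)$: a set of more than $c\cdot f_{\ref{Kt grid one interior bis}}(t)$ coordinates cannot be shattered, since shattering would in particular produce two grids $S\neq S'$ agreeing on a large controlled internal set, contradicting rigidity. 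Then \cref{lem:SauerShelah} gives at most $2(\ell^2)^{O_t(1)} = 2^{O_t(\log\ell)}$ patterns per vertex, and taking a product over the $\ell^2$ vertices (or more efficiently, observing that the whole grid is determined by its restriction to $O_t(1)$ coordinates, so there are at most $(\ell^2)^{O_t(1)}\cdot(\text{pattern count})$ grids total) yields a bound of the shape $2^{f_{\ref{lemma:only_a_few_grids_on_the_same_vertex_set}}(t)\cdot\ell\log\ell}$ on $|\mathcal{S}|$.

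To keep the exponent at $\ell\log\ell$ rather than $\ell^2\log\ell$, I would phrase the final step as follows. A triangulated $\ell\times\ell$ grid $U[S]$ is determined by the bijection $\phi_S$, which in turn — by the rigidity statement with $W$ ranging over internal rows/columns — is determined once we know the images of a bounded number of full rows or columns, i.e.\ $O_t(\ell)$ coordinates, together with which diagonal is chosen in each of the $O(\ell^2)$ cells; but the diagonals are \emph{forced} by the edge set of $U$ once the vertex images are fixed. So it suffices to count the possible images of $O_t(\ell)$ designated coordinates; each such coordinate, by Sauer--Shelah applied vertex-by-vertex, has at most $2(\ell^2)^{O_t(1)}$ options, giving $\big(2\ell^{O_t(1)}\big)^{O_t(\ell)} = 2^{O_t(\ell\log\ell)}$ in total, which is the claimed bound with $f_{\ref{lemma:only_a_few_grids_on_the_same_vertex_set}}(t)$ a polynomial in $t$. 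The delicate point throughout, and the step I expect to consume most of the work, is making ``rigidity from disagreement on few internal coordinates'' precise: turning a single mismatch between $\phi_S$ and $\phi_{S'}$ into $f_{\ref{Kt grid one interior bis}}(t)$ \emph{pairwise vertex-disjoint} jumps each having an $f_{\ref{Kt grid one interior bis}}(t)$-internal endpoint, so that \cref{Kt grid one interior bis} applies and contradicts $K_t$-minor-freeness of $U$.
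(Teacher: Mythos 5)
There is a genuine gap, and it sits exactly where you predicted the work would be: the ``rigidity'' claim at the core of your plan is false, not merely hard. Two edge sets $S\neq S'$ inducing triangulated $\ell\times\ell$ grids in $U$ can agree on the positions of \emph{all} vertices, let alone on a small internal set $W$: take $U$ to be a triangulated grid with the second diagonal added in a single internal cell (this is planar plus one edge, hence toroidal and $K_t$-minor-free for $t\geq 8$); then the two choices of diagonal in that cell give $S\neq S'$ with $\phi_S=\phi_{S'}$. The same example kills the later claims that ``the diagonals are forced by the edge set of $U$ once the vertex images are fixed'' and that the embedding is determined by the images of $O_t(1)$ (or of a few rows'/columns' worth of) coordinates, and hence the constant bound on the VC-dimension of your vertex-coordinate set system. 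More structurally, no argument based on a \emph{constant}-size shattered/controlled set can work here: a constant number of disagreements between two grids yields only $O(1)$ candidate jumps, which never contradicts $K_t$-minor-freeness via \cref{Kt grid one interior bis}, and indeed $U$ can legitimately contain $2^{\Omega(\ell)}$ distinct grids (e.g.\ by doubling diagonals along a boundary row), so the grids are genuinely not determined by few coordinates. The boundary is the reason the answer is $2^{\Theta_t(\ell\log\ell)}$ rather than polynomial, and it forces the shattered object to have size $\Theta_t(\ell)$.

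For comparison, the paper's proof applies \cref{lem:SauerShelah} to the family of edge sets viewed inside the ground set $E(U)$ (of size at most $f_{\ref{lemma:density Kt minor-free}}(t)\ell^2$ by \cref{lemma:density Kt minor-free}) with $d=\Theta_t(\ell)$, which is consistent with the target bound since $|E(U)|^{d}=2^{\Theta_t(\ell\log\ell)}$. From a shattered set $A$ of $d+1$ edges it uses only two members of the family: $E_A$ shows $U[A]$ has maximum degree at most $8$, so $A$ contains a matching $M_0$ of size $|A|/15$, while $E_\emptyset$ gives a grid $G_2$ containing no edge of $A$, so $M_0$ is a set of pairwise disjoint jumps of $G_2$ --- no paths or mismatch-extraction are needed at all. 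Finally, the jumps with both endpoints in the $O_t(\ell)$-vertex boundary strip are discarded via \cref{lemma:density Kt minor-free}, leaving at least $f_{\ref{Kt grid one interior bis}}(t)$ jumps with an internal endpoint, and \cref{Kt grid one interior bis} produces the contradicting $K_t$-minor. If you want to salvage your write-up, replace the vertex-coordinate set system and the rigidity step by this edge-set shattering with $d=\Theta_t(\ell)$; the rest of your toolkit (Sauer--Shelah, the density lemma, the jump lemma) is the right one.
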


\begin{proof}
Set $d=15\cdot f_{\ref{Kt double grid one interior bis}}(t)$, and let $\mathcal{G}\subseteq 2^{E(U)}$ be the family of subsets of edges of $U$ such that $U[S]$ is a spanning subgraph of $U$ isomorphic to a triangulated $\ell\times \ell$ double grid. 
    We define $f_{\ref{lemma:only_a_few_grids_on_the_same_vertex_set}}(t)$ such that
    \[
         \ell^{f_{\ref{lemma:only_a_few_grids_on_the_same_vertex_set}}(t)}\geq 2(f_{\ref{lemma:density Kt minor-free}}(t)\cdot 2 \ell^2)^{d}.
    \]
Assume for the sake of contradiction that $|\mathcal{G}| > \ell^{f_{\ref{lemma:only_a_few_grids_on_the_same_vertex_set}}(t)}$. By \Cref{lemma:density Kt minor-free}, $|E(U)|\le f_{\ref{lemma:density Kt minor-free}}(t)\cdot 2\ell^2$ and thus $|\mathcal{G}|  > 2|E(U)|^{d}$.    
    By \cref{lem:SauerShelah},
    there exists $A \subseteq E(U)$ with $|A|\geq d+1$ such that for every $B \subseteq A$, there exists $E_B \in \mathcal{G}$ such that 
    $E_B \cap A = B$. 
    Let $G_1 = U[E_A]$ and $G_2 = U[E_\emptyset]$.
    Since $G_1$ has maximum degree at most $8$,
    there exists a matching $M$ included in $A$ of size at least
    $\frac{|A|}{15} \geq f_{\ref{Kt double grid one interior bis}}(t)$. The set $M$ corresponds to a set of pairwise disjoint jumps in $G_2$, and it thus follows from \Cref{Kt double grid one interior bis} that $G_2\cup M$ contains a $K_t$-minor, and thus $U$ also contains a $K_t$-minor, which is a contradiction.
 \end{proof}

We are now ready to prove the main result of this section.

\begin{theorem}\label{thm:Kt free main}
    There is a polynomial function $f_{\ref{thm:Kt free main}} \colon \mathbb{N}_{>0} \to \mathbb{N}_{>0}$ such that the following holds.
    Let $t,\ell\ge 2$ be  integers.
    If $U$ is a $K_t$-minor-free graph containing every triangulated $\ell\times \ell$ double grid as subgraph, then
    \[
        |V(U)| \geq 2^{\ell^2/f_{\ref{thm:Kt free main}}(t)}.
    \]
     In particular, for every integer $t\geq 5$ there exists a constant $C_t > 0$ such that for every integer $n\ge 2$, every $K_{t}$-minor-free graph containing every $n$-vertex planar graph as subgraph has at least $2^{C_t n}$ vertices.
\end{theorem}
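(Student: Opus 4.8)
The plan is to follow the strategy of Huynh, Mohar, Šámal, Thomassen and Wood for the infinite case, using the hypothesis only through the fact that $U$ contains, as subgraphs, all $2^{(\ell-1)^2}$ triangulated $\ell\times\ell$ grids (one for each choice of a diagonal in each of the $(\ell-1)^2$ cells). Write $m:=f_{\ref{Kt grid one interior bis}}(t)$. I would argue by contradiction: assuming $|V(U)|<2^{\ell/f_{\ref{thm:Kt free main}}(t)}$ for a suitable polynomial $f_{\ref{thm:Kt free main}}$, I would exhibit a triangulated $\ell\times\ell$ grid $G\subseteq U$ together with at least $m$ pairwise disjoint \emph{jumps} of $G$ — each realised either directly by an edge of $U$, or by contracting a path of $U$ whose internal vertices avoid $V(G)$ — with at least one endpoint of each jump being $m$-internal in $G$. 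Then $G$ together with these jumps forms a minor of $U$ to which \cref{Kt grid one interior bis} applies, producing a $K_t$-minor and the desired contradiction. We may assume $t\ge 5$ (for $t\le 4$ the statement is vacuous or trivial, since the planar $3\times 3$ triangulated grid already has a $K_4$-minor), and $\ell$ bounded in terms of $t$ is trivial.

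To manufacture the jumps, first fix one triangulated $\ell\times\ell$ grid $G_1\subseteq U$ and, inside it, a \emph{probe set} $D\subseteq V(G_1)$: the $m$-internal portions of $\Theta(m)$ fixed $m$-internal rows of $G_1$, so that $|D|=\Theta(m\,\ell)$, every vertex of $D$ is $m$-internal in $G_1$, and along each of these rows $D$ forms a contiguous segment. The key step is to locate a second triangulated $\ell\times\ell$ grid $G_2\subseteq U$ whose trace $V(G_2)\cap D$ is \emph{spread out}, meaning that, along the chosen rows, $V(G_2)\cap D$ has linearly many \emph{gaps}: maximal runs of vertices of $D$ missing from $V(G_2)$ and flanked by two vertices $p,q\in V(G_2)\cap D$. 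For such a gap, the segment of $G_1$'s row from $p$ to $q$ is a path whose internal vertices lie in $V(G_1)\setminus V(G_2)$, so contracting it creates an edge $pq$ in a minor of $U$; as $p$ and $q$ are at distance at least $2$ along the row, $pq\notin E(G_1)$. If also $pq\notin E(G_2)$, this contracted path is a jump of $G_2$; and if $pq\in E(G_2)$, then $pq$ is already an edge of $U$ joining two vertices that are non-adjacent and $m$-internal in $G_1$, i.e.\ a jump of $G_1$. Keeping only every second gap makes the corresponding paths, hence jumps, pairwise disjoint.

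It remains to collect $m$ usable jumps in a single grid. By the pigeonhole principle, a constant fraction of the retained gaps yield jumps of $G_1$, or a constant fraction yield jumps of $G_2$. The jumps of $G_1$ are automatically usable since all of $D$ is $m$-internal in $G_1$; the jumps of $G_2$ have endpoints in $D$, which need not be $m$-internal in $G_2$, but $G_2$ has only $O(m\ell)$ non-$m$-internal vertices, so choosing $|D|=Cm\ell$ with $C$ a large enough polynomial in $t$ guarantees that discarding the few gaps with a bad endpoint still leaves at least $m$ pairwise disjoint jumps of $G_2$ with an $m$-internal endpoint. In both cases \cref{Kt grid one interior bis} gives a $K_t$-minor of $U$; hence $|V(U)|\ge 2^{\ell/f_{\ref{thm:Kt free main}}(t)}$ for a polynomial $f_{\ref{thm:Kt free main}}$ absorbing $m$ and $C$. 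For the last assertion, given $n$ set $\ell:=\lfloor\sqrt{n}\rfloor$: every triangulated $\ell\times\ell$ grid is planar on $\ell^2\le n$ vertices, hence (after padding with isolated vertices) an $n$-vertex planar graph and thus a subgraph of $U$, so $|V(U)|\ge 2^{(\sqrt{n}-1)/f_{\ref{thm:Kt free main}}(t)}\ge 2^{C_t\sqrt{n}}$ with $C_t:=1/(2f_{\ref{thm:Kt free main}}(t))$ for $n$ large, the finitely many small $n$ being absorbed by shrinking $C_t$.

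The step I expect to be the real obstacle is locating the second grid $G_2$ with a spread-out trace on $D$. One must show that a small $U$ cannot host the $2^{(\ell-1)^2}$ grids without two of them intersecting in this controlled way: the traces on $D$ take at most $2^{|D|}$ values with $|D|=\poly(t)\cdot\ell$, and a Sauer--Shelah argument (\cref{lem:SauerShelah}) — combined with \cref{lemma:only_a_few_grids_on_the_same_vertex_set} to bound how many grids can reuse any given $\ell^2$-vertex set, and iterated both on the probe set and on the vertex set to dispose of the degenerate traces (``$D$'' and ``$\emptyset$'') — should yield either a shattered subset of $D$, which at once gives an alternating, hence spread-out, trace, or a heavily reused non-degenerate trace. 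Making this dichotomy actually produce a \emph{spread-out} trace (rather than merely a large or a small intersection) and, in this non-toroidal setting, defeating the boundary of $G_2$ — which is precisely why the probe set must have size $\Omega_t(\sqrt{n})$ here, whereas size $\Omega_t(1)$ suffices in the toroidal case — is where most of the work lies.
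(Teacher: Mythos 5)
Your plan correctly identifies the tools (jumps, \cref{Kt grid one interior bis}, Sauer--Shelah, \cref{lemma:only_a_few_grids_on_the_same_vertex_set}, and the density lemma \cref{lemma:density Kt minor-free} for the boundary), and the gap/contracted-path mechanism for manufacturing jumps in $G_1$ or $G_2$ is exactly the right idea. But the step you yourself flag as ``the real obstacle'' — locating $G_2$ with a spread-out trace on a probe set $D$ fixed inside a pre-chosen $G_1$ — is a genuine gap, and your sketch of how to fill it does not work as stated.

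The problem with fixing $G_1$ (and $D\subseteq V(G_1)$ with $|D|=\Theta(\ell\cdot\poly(t))$) first and then trying to shatter a subset of $D$ is that the family of traces $\{V(G_2)\cap D : G_2\text{ a triangulated grid in }U\}$ is a family of subsets of a universe of size only $O(\ell\poly(t))$. Its VC-dimension could be tiny even if $U$ hosts $2^{(\ell-1)^2}$ grids: nothing stops every grid other than $G_1$ from having trace $\emptyset$ or $D$ on $D$, and the number of grids gives no lower bound on the number of \emph{distinct} traces. Your suggestion to ``iterate both on the probe set and on the vertex set to dispose of the degenerate traces'' is not a concrete argument; it is unclear what to iterate over or why the iteration would terminate with a non-degenerate, spread-out trace.

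The paper's proof resolves this by reversing the order: it first shatters a set $A$ of size $d+1=\Theta(\ell\cdot\poly(t))$ \emph{inside $V(U)$} (not inside a fixed grid), using the global count of triangulated grids together with \cref{lemma:only_a_few_grids_on_the_same_vertex_set} to lower-bound the number of distinct grid vertex sets $\mathcal{G}\subseteq 2^{V(U)}$ by $2N^d$, so that \cref{lem:SauerShelah} applies. Only \emph{then} is $G_1$ defined, as the grid spanning $V_A$ (the member of $\mathcal{G}$ whose trace on $A$ is all of $A$). Because $A$ may land anywhere in $G_1$, the paper covers $V(G_1)$ by four induced paths rather than pre-selecting rows, finds a path $P$ carrying $|A|/4$ vertices of $A$, and picks the subset $B\subseteq A$ so that $G_2$ (the grid with $V_B\cap A=B$) alternates along $P$ by construction — no ``spread-out trace'' needs to be extracted by a separate argument, it is built into the choice of $B$. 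Finally, rather than choosing $D$ to be $m$-internal in $G_1$ a priori and handling $G_2$'s boundary separately, the paper produces all the jumps first and only at the end discards the non-internal ones in whichever of $G_1,G_2$ ended up hosting the majority, via one uniform application of \cref{lemma:density Kt minor-free}. Your concluding reduction from planar graphs to triangulated grids is correct; the missing piece is the shattering-before-fixing-the-grid reversal, which is the crux of the whole proof.
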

\begin{proof}
We set $d=48 f_{\ref{Kt double grid one interior bis}}(t)+48$, and $f_{\ref{thm:Kt free main}}(t)=16 \cdot \max(d,f_{\ref{lemma:only_a_few_grids_on_the_same_vertex_set}}(t)^2)$. Let $U$ be a $K_t$-minor-free graph containing all triangulated  $\ell\times \ell$ double grids, and assume for the sake of contradiction that  $N=|V(U)|<2^{\ell^2/f_{\ref{thm:Kt free main}}(t)}$. Since $N\ge 2$, we can assume that $\ell \geq \sqrt{f_{\ref{thm:Kt free main}}(t)}\ge 3$.

Observe that there are at least $2^{2\ell^2-2}/24\ell^2$ non-isomorphic triangulated  $\ell\times \ell$ double grids: starting from the planar quadrangulation obtained from two disjoint copies of the $\ell \times \ell$ grid by adding a perfect matching between their boundaries, we obtain $2^{2\ell^2-2}$ distinct ways to complete this quadrangulation into a triangulated $\ell \times \ell$ double grid (as in each of the $2\ell^2-2$ quadrangular faces, we can add either of the two diagonals). It was proved by Weinberg \cite{Wein66} that any 3-connected planar graph on $m$ edges has at most $4m$ automorphisms (see also \cite{HT66} for a short proof). It thus follows that there are at least $2^{2\ell^2-2}/24\ell^2$ non-isomorphic triangulated  $\ell\times \ell$ double grids, as desired. It can be checked  that as $\ell\ge 3$, we have $2^{2\ell^2-2}/24\ell^2\ge 2^{\ell^2/2}$.

\medskip

By \Cref{lemma:only_a_few_grids_on_the_same_vertex_set}, for each set $A\subseteq V(U)$ of size $2\ell^2$, at most $2^{f_{\ref{lemma:only_a_few_grids_on_the_same_vertex_set}}(t)\log \ell}$ triangulated  $\ell\times \ell$ double grids can be accounted for by $U[A]$. Hence, there is a family $\mathcal{G}\subseteq 2^{V(U)}$ of  $2^{\ell^2/2-f_{\ref{lemma:only_a_few_grids_on_the_same_vertex_set}}(t)\log \ell}$ vertex subsets of size $2\ell^2$ of $U$ that each span a triangulated  $\ell\times \ell$ double grid. As $\ell\ge 4 f_{\ref{lemma:only_a_few_grids_on_the_same_vertex_set}}(t)$, we have $\ell^2/4 \ge f_{\ref{lemma:only_a_few_grids_on_the_same_vertex_set}}(t)\ell\ge f_{\ref{lemma:only_a_few_grids_on_the_same_vertex_set}}(t) \log \ell$ and thus $\ell^2/2-f_{\ref{lemma:only_a_few_grids_on_the_same_vertex_set}}(t)\log \ell\ge  \ell^2/4$.

    We obtain
    \[
    |\mathcal{G}| =2^{\ell^2/2 - f_{\ref{lemma:only_a_few_grids_on_the_same_vertex_set}}(t) \log \ell}\ge 2^{\ell^2/4}\ge 2^{2\ell^2d/f_{\ref{thm:Kt free main}}(t)}\ge 2\big(2^{\ell^2/f_{\ref{thm:Kt free main}}(t)}\big)^{d}>2N^{d},
    \]
where the second inequality follows from the fact that  $f_{\ref{thm:Kt free main}}(t)\ge 8d$ and the third follows from the fact that $\ell \geq \sqrt{f_{\ref{thm:Kt free main}}(t)}$ and thus $\ell^2 d/f_{\ref{thm:Kt free main}}(t)\ge d \ge 1$. The final inequality follows from our initial assumption that $N=|V(U)|<2^{\ell^2/f_{\ref{thm:Kt free main}}(t)}$.

    It then follows from \Cref{lem:SauerShelah} that
    the VC-dimension of $\mathcal{G}$ is at least $d+1$. 
    Hence there exists $A \subseteq V(U)$ of size at least $d+1$
    such that for every $B \subseteq A$, there exists a  $V_B\in \mathcal{G}$ such that
    \[
        V_B \cap A = B.
    \]
    In particular, $A\subseteq V_A\in \mathcal{G}$.    
    Let $G_1$ be a triangulated $\ell \times \ell$ double grid spanning $U[V_A]$.

\medskip
Note that the vertices of every triangulated grid can be covered by  four (not necessarily disjoint) induced paths: two horizontal paths covering the top and bottom rows of the grid, one path containing all vertices in the odd numbered columns (except for possibly those on the top and bottom rows, see \Cref{fig:building_B_and_Q v2} for an illustration) and the fourth path for the vertices in the even numbered columns.

This implies that every triangulated \emph{double} grid can be covered by eight induced paths. By the pigeonhole principle, this means that $G_1$ contains an induced path $P$ containing at least $|A|/8$ vertices from $A$.

We select $k= \lfloor|V(P)|/3\rfloor\ge |A|/24-1$ vertex-disjoint subpaths $P_1,\ldots,P_k$ of $P$ such that for each $1\le i \le k$, the two endpoints of $P_i$ are in $A$ and exactly one internal vertex of $P_i$ lies in $A$ (so each $P_i$ contains precisely 3 vertices of $A$). Let $B\subseteq A$ be the set of endpoints of the paths $P_i$, $1\le i \le k$. Note that every path $P_i$ contains a vertex of $A\setminus B$.

We then consider a set $V_B\in \mathcal{G}$ such that $V_B\cap A=B$ and let $G_2$ be a triangulated $\ell \times \ell$ grid spanning $U[V_B]$.
We will show that $G_1 \cup G_2$ contains a $K_t$-minor, hence contradicting the fact that $U$ is $K_t$-minor-free. 

\medskip

Let us construct two families of paths $\mathcal{Q}_1$ and $\mathcal{Q}_2$
    satisfying the following properties:
    \begin{enumerate}
        \item  the members of $\mathcal{Q}_1$ are pairwise disjoint paths in $G_{2}$ internally disjoint from $V(G_1)$, and for any path $Q\in \mathcal{Q}_1$, the endpoints of $Q$ are non-adjacent in $G_1$ (so the pairs of endpoints of the paths from $\mathcal{Q}_1$ form pairwise disjoint jumps in $G_1$);
        \item  the members of $\mathcal{Q}_2$ are pairwise disjoint paths in $G_{1}$ internally disjoint from $V(G_2)$, , and for any path $Q\in \mathcal{Q}_2$, the endpoints of $Q$ are non-adjacent in $G_2$ (so the pairs of endpoints of the paths from $\mathcal{Q}_2$ form pairwise disjoint jumps in $G_2$).
    \end{enumerate}    

    \begin{figure}
    \centering
    \includegraphics[scale=1]{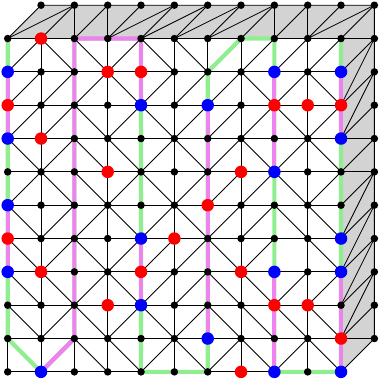}
    \caption{Illustration for the proof of \Cref{thm:Kt free main}.
        For the sake of clarity we only depict one of the two triangulated grids forming $G_1$ and part of the boundary of the second grid.
        The red and blue vertices represent the shattered set $A$,
        and the blue vertices are the members of the subset $B$.
        Along a path $P$ in the grid (in bold green and violet) we build disjoint subpaths of $P$ between vertices in $B$ via a vertex in $A\setminus B$.
        These subpaths are depicted in violet.
        This ensures that every such path contains a path in $V(G_1)$ between two vertices $u,v\in V(G_2)$
        of length at least $2$ internally disjoint from $V(G_2)$.
        If $u,v$ are non-adjacent in $G_2$, then this creates a jump in $G_2$,
        and we put it in $\mathcal{Q}_2$.
        Otherwise, the edge $uv$ creates a jump in $G_1$, that we put in $\mathcal{Q}_1$.
        At the end of this process, either $\mathcal{Q}_1$ or $\mathcal{Q}_2$ is large, which will imply the existence of a $K_t$-minor.}
    \label{fig:building_B_and_Q v2}
\end{figure}

    For any $1\le i \le k$, we do the following. By definition, $P_i$ intersects $A \setminus B$,
    and thus $P_i$ contains a subpath $Q_i$ between two vertices  $x,y \in V(G_2)$ such that $Q_i$ contains at least one internal vertex, and such that all internal vertices of $Q_i$ lie in  $V(G_1)\setminus V(G_2)$.
    If the endpoints $x,y$ of $Q_i$ are non-adjacent in $G_2$, then add $Q_i$ to $\mathcal{Q}_2$.
    Otherwise add the path consisting of the single edge $xy$ to $\mathcal{Q}_1$. See Figure \ref{fig:building_B_and_Q v2} for an illustration.
    Observe that the two properties above are maintained in both cases. 

    \medskip

    Since $|\mathcal{Q}_1 \cup \mathcal{Q}_2|=k\ge |A|/24-1$, there exists $a \in \{1,2\}$ such that $|\mathcal{Q}_a| \geq |A|/48-1\ge d/48-1=f_{\ref{Kt double grid one interior bis}}(t)$. 
    Fix such $a\in \{1,2\}$, and let $M$ be the family of all the pairs of endpoints of paths in $\mathcal{Q}_a$.
    Since $\mathcal{Q}_a$ is a family of pairwise disjoint paths internally disjoint from $V(G_a)$,
    $G_a \cup M$ is a minor of $U$.
    Moreover, $M$ is disjoint from $E(G_a)$ and since $|M|\ge f_{\ref{Kt double grid one interior bis}}(t)$, we conclude by \Cref{Kt double grid one interior bis} that $K_t$ is a minor of $G_a \cup M$, and so of $U$.
\end{proof}

\section{Conclusion}\label{sec:ccl}

\subsection{Finite versus infinite}

One of our main motivations was to obtain a finite version of the result of Huynh, Mohar, Šámal, Thomassen and Wood \cite{huynh2021universality} (\cref{thm:h21} in the introduction), stating that a countable graph containing all  countable planar graphs as subgraph has an infinite clique minor. A natural question is whether there is a direct connection between the infinite and finite versions of the problem. While it does not seem to us that one result can be quickly deduced from the other, we should note that our approach for producing $K_t$-minors out of short jumps in \cref{lem:slocal} can be used greedily in the infinite case to produce arbitrarily large clique minors, because in the infinite case all jumps can be considered as short. This alternative approach does not directly produce infinite clique minors as in \cite{huynh2021universality} (additional compactness arguments are needed), but this highlights the fact that the finite version of the problem contains a number of challenges that do not appear in the infinite version (such as boundary effects and the existence of long jumps), regardless of any quantitative aspects.

An anonymous reviewer asked about a possible meta-conjecture relating the existence of countable faithful universal graphs to that of polynomial size faithful universal graphs for general graph classes. We now show that no such relation exists in general, even for classes that are monotone and closed under disjoint union (these restrictions are natural and they imply the existence of faithful universal graphs, obtained by taking the disjoint union of all graphs in the class). 

In one direction, consider triangle-free graphs. The Henson graph \cite{henson1971family} is a countable triangle-free graph containing all countable triangle-free graphs as induced subgraphs (and thus as subgraphs). Note that any triangle-free graph $U_n$ containing all $n$-vertex triangle-free graphs as subgraphs must contain all $n$-vertex maximal triangle-free graphs as induced subgraphs. There are at least $2^{n^2/8}$ distinct maximal triangle-free graphs on $n$ vertices \cite{BP11}, and thus at least $2^{(1/8-o(1))n^2}$ non-isomorphic ones. Therefore, we have \[2^{(1/8-o(1))n^2}\le \binom{|V(U_n)|}{n}\le |V(U_n)|^n,\] and thus  $U_n$ must have size at least $2^{n/8-o(n)}$.

In the other direction, for every finite binary sequence $b=(b_1,\ldots,b_k)$, where $b_i \in \{0,1\}$, define $G_b$ to be the graph obtained from the path $v_1,\ldots,v_{2^k}$ by attaching, at each vertex $v_{2^{i}}$, a triangle if $b_i=0$, and a copy of $P_2$ otherwise (see Figure \ref{fig:counterexample_meta} for an example). Now let $\mathcal{C}$ be the closure of the family $\{G_b \mid k \in \mathbb{N},\ b \in \{0,1\}^k\}$ under subgraph and disjoint union. It is easy to see that $\mathcal{C}$ admits faithful universal graphs of size $O(n^3)$. Indeed, it suffices to take $n$ copies of the disjoint union of all graphs $G_b$ with $b \in \{0,1\}^{\lceil \log n \rceil}$.

\begin{figure}[ht]
    \centering
    \includegraphics{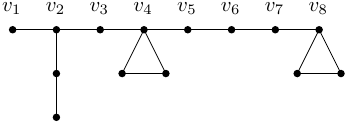} 
    \caption{Representation of $G_b$ for $b = (1,0,0).$}
    \label{fig:counterexample_meta}
\end{figure}

Let $\mathcal{D}$ be the closure of the family $\{G_b \mid  b \in \{0,1\}^\mathbb{N}\}$ under subgraph and disjoint union, where $G_b$ is defined for infinite binary sequences analogously to the finite case.
We observe that there is no countable graph $U$ in $\mathcal{D}$ which contains all countable graphs from $\mathcal{D}$ as subgraph, because $U$ can encode only countably many binary sequences, whereas the set of infinite binary sequences is uncountable.

\subsection{Open problems}

In \cref{tw upper bound}, we constructed a polynomial-size induced-universal graph \( U \) for the class of graphs with treewidth \( t \), ensuring that \( \tw(U) \le 3t+1 \). This means that the increase in treewidth from the class to the induced-universal graph is bounded by a constant factor. In contrast, for pathwidth, \cref{pw upper bound} only guarantees an upper bound where the increase in pathwidth grows quadratically. We believe this discrepancy is not merely a limitation of our technics, but may reflect a difference in behaviour between pathwidth and treewidth. This leads us to the following question:
\begin{problem}
    Is there a constant $C>0$ such that for every $k\in \mathbb{N}$, the class $\mathcal{G}_{\pw \leq k}$ admits subgraph-universal graphs of pathwidth at most $Ck$ and polynomial order $n^{O_k(1)}$?
\end{problem}
In terms of subgraph-universal $K_t$-minor-free graphs containing all $K_s$-minor-free graphs, we  settle the (approximate) order except for the case $s=4$ and $t=5,6$. 
In Corollary~\ref{cor:K5-m-free universal for K4-m-free}, we show that $K_5$-minor-free subgraph-universal graphs of quasi-polynomial order can be obtained for the class of $K_4$-minor-free graphs and in fact those universal graphs have treewidth~3. This leaves the question of whether such a graph can be obtained of polynomial size.
\begin{problem}
    Does the class of $K_4$-minor-free graphs admit a subgraph-universal graph of polynomial order which is $K_5$-minor-free? If so, can it even be chosen of treewidth~$3$? If not, what if we allow it to be $K_6$-minor-free? 
\end{problem}
We also remark that our constructions of quasi-polynomial order are only subgraph-universal. It may also be interesting to see if constructions with similar properties can be made that are induced-universal.
\medskip

Our lower bound $2^{(n-t-2)/(t-2)}$ in Corollary \ref{lem:cor_kt_lowerbound} becomes worse as $t$ increases. One would rather expect the opposite, that is, a lower bound of the form $2^{\omega_t(1)n}$.
\begin{problem}
    Is there an absolute constant $c$ such that for every integer $t$, the class of $K_t$-minor-free graphs has faithful subgraph-universal graphs of order $n\mapsto 2^{cn}$?
\end{problem}

The paper has been mostly dedicated to minor-closed classes, but interesting questions can be raised more generally for monotone or hereditary classes. The following problem was raised in \cite{BEFZ25}, in the context of local certification in distributed computing.

\begin{problem}[\cite{BEFZ25}]
    For which  graph $H$ is it the case that the class of $H$-subgraph-free graphs has faithful subgraph-universal graphs of order $n\mapsto 2^{o(n^2)}$? 
\end{problem}

Note that when $H$ is bipartite, there is a real $\varepsilon>0$ such that $H$-subgraph-free graphs on $n$ vertices have $O(n^{2-\varepsilon})$ edges and thus there are at most $2^{o(n^2)}$ $n$-vertex $H$-subgraph-free graphs. When moreover, $H$ is connected, then the disjoint union of all these graphs is also $H$-subgraph-free and in this case the class of $H$-subgraph-free graphs has faithful subgraph-universal graphs of order $n\mapsto 2^{o(n^2)}$.

\section*{Acknowledgement}

Part of this research was conducted during the ``New Perspectives in Colouring and Structure" workshop, held at the Banff International Research Station, September 29--October 4, 2024, and the Oberwolfach workshop 2502 ``Graph Theory'', January 6--10, 2025.  We are grateful to the organisers and participants for providing a stimulating research environment.  

We would also like to thank Gwena\"el Joret, whose suggestion to look at grids glued on the faces of a cube lead to a significant improvement in our results.

Finally, we thank the two reviewers for their helpful corrections and suggestions. 

\bibliographystyle{alpha}
\bibliography{biblio}

\appendix

\section{Proof of Lemma~\ref{Kt grid one interior bis}}\label{sec:app}

We say that a jump between vertices $(x,y)$ and $(x',y')$ in a triangulated grid is \emph{local} if $|x-x'| = |y - y'| = 1$.
We will need the following lemma from \cite{Kawarabayashi2018} which creates a $K_t$-minor out of local jumps that are all contained within two consecutive internal rows (note that the statement below is slightly weaker than in \cite{Kawarabayashi2018}).

\begin{lemma}[{Lemma~3.2 in~\cite{Kawarabayashi2018}}]
    \label{Kt grid middle line crossings bis}
    There is a polynomial function $f_{\ref{Kt grid middle line crossings bis}}\colon \mathbb{N}_{>0} \to \mathbb{N}_{>0}$ such that the following holds.
    Let $t, \ell,\ell'$ be positive integers    
    with $\ell,\ell' \geq 2f_{\ref{Kt grid middle line crossings bis}}(t)$.

    Let $G$ be a triangulated $\ell \times \ell'$ grid,
    and let $R_{i}$, $R_{i+1}$ be two $f_{\ref{Kt grid middle line crossings bis}}(t)$-internal consecutive rows of $G$. 
    For every set $M$ of pairwise disjoint jumps of $G$,
    if
    \begin{enumerate}
        \item for every $uv \in M$, $uv$ is local and $u,v \in R_i \cup R_{i+1}$; and
        \item $|M| \geq f_{\ref{Kt grid middle line crossings bis}}(t)$; 
    \end{enumerate}
    then $K_t$ is a minor of $G \cup M$.
\end{lemma}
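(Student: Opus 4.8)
The plan is to follow the strategy of Kawarabayashi, Mohar and collaborators \cite{Kawarabayashi2018}: first normalise the set of jumps, then collapse the part of the grid lying away from the two rows $R_i,R_{i+1}$ into a clean ``core'', and finally exhibit an explicit $K_t$-minor model inside the core. Throughout, $f_{\ref{Kt grid middle line crossings bis}}(t)$ will be taken to be a sufficiently large polynomial in $t$, of order $t^2$ (and the cases $t\le 4$ are trivial since a small triangulated grid already has a $K_4$-minor). \emph{Step 1 (normalising the jumps):} every jump of $M$ lies in a single cell of the band $R_i\cup R_{i+1}$ and is precisely the diagonal of that cell that is \emph{not} an edge of $G$; in particular two jumps never lie in the same cell. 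The orientation of a jump (``$/$'' versus ``$\backslash$'') is forced by the triangulation of its cell, so by keeping the majority orientation we may assume all jumps of $M$ have the same orientation; after this any two of them are automatically vertex-disjoint, and by keeping every third one in the left-to-right order of their cells we may further assume distinct jump-cells are separated by at least one full column. Each reduction costs only a constant factor, so a family of $\binom{t}{2}$ jumps survives provided $f_{\ref{Kt grid middle line crossings bis}}(t)\ge 6\binom{t}{2}$; discarding a bounded number more, we may assume there are unused columns both to the left and to the right of all surviving jump-cells. List these cells $Q_1,\dots,Q_{\binom t2}$ from left to right; in $Q_k$ the jump joins an ``upper'' corner $a_k\in R_i$ to a ``lower'' corner $b_k\in R_{i+1}$, and together with the grid-diagonal makes $V(Q_k)$ a $K_4$.

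\emph{Step 2 (the core graph):} since $R_i$ and $R_{i+1}$ are $f_{\ref{Kt grid middle line crossings bis}}(t)$-internal, $G$ contains a triangulated grid $H^{\mathrm{top}}$ of height at least $f_{\ref{Kt grid middle line crossings bis}}(t)$ sitting directly above $R_i$ and adjacent to all of $R_i$, a similar grid $H^{\mathrm{bot}}$ below $R_{i+1}$ adjacent to all of $R_{i+1}$, and the two boundary columns joining $H^{\mathrm{top}}$ and $H^{\mathrm{bot}}$ around the left and right sides. Contracting the portion of the band strictly between consecutive jump-cells to a single ``bridge'' path, I obtain a minor $G^{\ast}$ of $G\cup M$ that is a chain of $K_4$'s $Q_1,\dots,Q_{\binom t2}$ linked by bridges, with $H^{\mathrm{top}}$ attached along the chain on the ``$R_i$-side'' and $H^{\mathrm{bot}}$ on the ``$R_{i+1}$-side'', plus the two side-connections. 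The only feature of $H^{\mathrm{top}}$ (resp.\ $H^{\mathrm{bot}}$) that matters is that, being a tall planar grid with all relevant terminals on one boundary path, it can realise any \emph{non-crossing} family of pairwise-disjoint terminal-to-terminal paths.

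\emph{Step 3 (the $K_t$-minor model):} put the $\binom t2$ jump-cells in bijection with the $\binom t2$ pairs of $\{1,\dots,t\}$, and build branch sets $B_1,\dots,B_t$ so that, for the pair $\{r,s\}$ with cell $Q_k$, the jump edge $a_kb_k$ is the unique edge between $B_r$ and $B_s$ --- one of $B_r,B_s$ owns a corner of $Q_k$ in $R_i$, the other owns a corner in $R_{i+1}$, and the remaining two corners of $Q_k$ together with the adjacent bridge vertices are distributed to keep both locally connected. Each $B_r$ then collects $t-1$ scattered corners along the band, some on the $R_i$-side and some on the $R_{i+1}$-side, and is completed by the bridges joining consecutive owned corners, the rungs used for the single ``dip'' a branch set makes to the opposite row, and disjoint connecting paths routed through $H^{\mathrm{top}}$, through $H^{\mathrm{bot}}$, and around the two sides. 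Connectedness of each $B_r$ and the presence of the required inter-branch edges are then immediate; pairwise disjointness reduces exactly to the condition that the system of connecting paths used in each half-grid is non-crossing, which is secured by choosing the pair-to-cell bijection and the corner-to-branch-set assignment according to a suitable linear layout of the edges of $K_t$ along the band. This yields a $K_t$-minor in $G^{\ast}$, hence in $G\cup M$.

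\emph{Main obstacle.} The heart of the proof is Step 3, and precisely the requirement that all intra-branch-set connections be routable without crossings: the two half-grids can only realise non-crossing disjoint linkages, and a naive assignment (for instance, ``the smaller index always owns the left corner, cells taken in lexicographic order of pairs'') provably fails already for small $t$. Finding an assignment valid for every $t$ --- exploiting not only the two half-grids but also the bridges, the $K_4$-cells and the two side-connections to overcome the book-embedding-type obstruction --- is the technical crux; it is the combinatorial analogue of extracting a $K_{\Omega(\sqrt g)}$-minor from a sufficiently large grid-like graph on a surface of Euler genus $g$, with $g$ here of order $|M|$.
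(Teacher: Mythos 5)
First, note that the paper does not prove this statement at all: it is quoted verbatim (in slightly weakened form) from \cite{Kawarabayashi2018}, so your attempt has to stand on its own. As written, it does not. The decisive problem is not merely that Step 3 is left unfinished (you yourself call the required assignment ``the technical crux'' and do not produce it); it is that the scheme of Step 3 is topologically impossible for $t\ge 5$. In your plan each cell $Q_k$ is dedicated to a single pair $\{r,s\}$: the jump $a_kb_k$ realises the adjacency $B_rB_s$, \emph{all four} corners of $Q_k$ are distributed among $B_r\cup B_s$, and every other connection (bridges along the band, paths in $H^{\mathrm{top}}$ and $H^{\mathrm{bot}}$, the two side connections) is routed as a non-crossing system in the planar graph $G$. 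But then, in each used cell, whichever of the two diagonals is present in the model is redundant: if a diagonal runs between $B_r$ and $B_s$ it duplicates the adjacency already provided, and if it lies inside one branch set its endpoints can be reconnected through the cell's boundary edges, since the third corner on that boundary also belongs to $B_r\cup B_s$. Deleting one diagonal per cell therefore leaves a valid $K_t$-model drawn in the plane with no crossings at all, i.e.\ a $K_t$-minor inside a planar graph --- impossible for $t\ge 5$. So no choice of pair-to-cell bijection or linear layout can rescue the construction; the obstruction is not a book-embedding subtlety but planarity itself.

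What the argument genuinely needs, and what your sketch never introduces, is cells at which two \emph{different} branch sets cross: one branch set traverses the cell along the triangulation diagonal while another traverses it along the jump, so that both crossing edges are essential (one inside each branch set) and the model is genuinely non-planar there. A clean way to finish along these lines is the ``strand'' construction: route $t$ vertex-disjoint horizontal strands (one per branch set) across the grid, using the $f_{\ref{Kt grid middle line crossings bis}}(t)$-internal rows above and below the band; between two consecutive used jump-cells the strands may be re-stacked vertically (their relative order is preserved, but you choose which consecutive pair straddles rows $R_i,R_{i+1}$), and at a used jump-cell that consecutive pair swaps via the two diagonals, the vertical edges of the cell witnessing their adjacency. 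Performing the bubble-sort reversal with $\binom t2$ swaps makes every pair adjacent, and choosing the used jumps pairwise $\Omega(t)$ columns apart (possible once $f_{\ref{Kt grid middle line crossings bis}}(t)=\Omega(t^3)$, say) leaves room for the re-stacking. Your Steps 1--2 are harmless normalisations, but the reduction ``all that matters is that the half-grids realise non-crossing linkages'' discards exactly the crossing mechanism that makes the lemma true, so the proposal has a genuine gap both in execution and in its underlying idea.
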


We obtain the following as a simple consequence, where the local jumps are allowed to be scattered in the grid, instead of all concentrated on a central row.

\begin{lemma}
    \label{Kt when a lot of local edges}
    There is a polynomial function $f_{\ref{Kt when a lot of local edges}}\colon \mathbb{N}_{>0} \to \mathbb{N}_{>0}$ such that the following holds.
    Let $t, \ell,\ell'$ be positive integers with $\ell,\ell' \geq 2f_{\ref{Kt when a lot of local edges}}(t)$. 
    Let $G$ be a triangulated $\ell \times \ell'$ grid.
    For every set $M$ of pairwise disjoint jumps of $G$,
    if
    \begin{enumerate}
        \item for every $uv \in M$, $uv$ is local and $u, v$ are $f_{\ref{Kt when a lot of local edges}}(t)$-internal; and
        \item $|M| \geq f_{\ref{Kt when a lot of local edges}}(t)$; 
    \end{enumerate}
    then $K_t$ is a minor of $G \cup M$.
\end{lemma}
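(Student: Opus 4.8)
The plan is to reduce to Lemma~\ref{Kt grid middle line crossings bis} by concentrating enough of the local jumps onto a single pair of consecutive internal rows. First I would set $f_{\ref{Kt when a lot of local edges}}(t)$ to be a sufficiently large polynomial in $f_{\ref{Kt grid middle line crossings bis}}(t)$; roughly, we want $f_{\ref{Kt when a lot of local edges}}(t) \ge 2\big(f_{\ref{Kt grid middle line crossings bis}}(t)\big)^2$ so that the pigeonhole step below succeeds, and also $f_{\ref{Kt when a lot of local edges}}(t) \ge f_{\ref{Kt grid middle line crossings bis}}(t)$ so that the internality hypothesis we hand to Lemma~\ref{Kt grid middle line crossings bis} is satisfied. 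Given a triangulated $\ell \times \ell'$ grid $G$ with $\ell, \ell' \ge 2 f_{\ref{Kt when a lot of local edges}}(t)$, and a set $M$ of pairwise disjoint local jumps, each between two $f_{\ref{Kt when a lot of local edges}}(t)$-internal vertices, with $|M| \ge f_{\ref{Kt when a lot of local edges}}(t)$.

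The key observation is that every local jump $uv$ with $u=(x,y)$, $v=(x',y')$, $|x-x'|=|y-y'|=1$, has both endpoints confined to two consecutive rows $R_{\min(y,y')}$ and $R_{\min(y,y')}+1$. Since both $u$ and $v$ are $f_{\ref{Kt when a lot of local edges}}(t)$-internal, these two rows are among the rows $R_{f}, \dots, R_{\ell'-f}$ where $f = f_{\ref{Kt when a lot of local edges}}(t)$, and in particular both rows are $f_{\ref{Kt grid middle line crossings bis}}(t)$-internal (here we use $f \ge f_{\ref{Kt grid middle line crossings bis}}(t)$). Thus I would assign to each jump $uv \in M$ the index $\rho(uv) := \min(y,y')$, which ranges over at most $\ell' - 2 f_{\ref{Kt grid middle line crossings bis}}(t) \le \ell'$ values, but more usefully: for the pigeonhole bound to give a constant we instead observe that we only need $|M|$ jumps sharing a common pair of consecutive rows. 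To force this, I would further subsample. Partition $[f, \ell'-f]$ (the range of admissible $y$-coordinates) into blocks of $2$ consecutive rows; each jump lives entirely inside one such block after we possibly pair up blocks appropriately, so a block contains $\Theta(|M|/\ell')$ jumps only on average — this is too weak. Instead, the right move is: the jumps in $M$ have $\le \ell'$ distinct values of $\rho$, but we want $f_{\ref{Kt grid middle line crossings bis}}(t)$ jumps with the \emph{same} value of $\rho$. This requires $|M| \ge \ell' \cdot f_{\ref{Kt grid middle line crossings bis}}(t)$, which we cannot assume. So the genuine argument must be different: we do not need all jumps on one row-pair at once — rather, we route each local jump down to a fixed central row-pair using the grid structure, since a local jump $uv$ together with the grid edges of the columns between it and a target row can be contracted to produce a local jump on the target row-pair, as long as the "tracks" used for distinct jumps are vertex-disjoint.

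Concretely, here is the step I would carry out. Fix a central pair of consecutive $f_{\ref{Kt grid middle line crossings bis}}(t)$-internal rows, say $R_{i_0}, R_{i_0+1}$ with $i_0 \approx \ell'/2$. For a local jump $uv$ sitting on rows $R_j, R_{j+1}$ at columns $\{c, c+1\}$, the two vertical grid-paths in columns $c$ and $c+1$ running from rows $\{j,j+1\}$ to rows $\{i_0,i_0+1\}$, together with the jump edge $uv$, contract to a single local jump on $R_{i_0} \cup R_{i_0+1}$ at columns $\{c,c+1\}$. Distinct jumps of $M$ use distinct column-pairs only if their column supports are disjoint, which need not hold; but since $M$ is a matching of local jumps, each column of the grid is the "left column" of at most a bounded number of them, and by deleting at most a constant fraction I can assume the column-pairs used are pairwise disjoint (greedily select jumps with disjoint column supports, losing at most a factor $2$). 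Then the vertical tracks are pairwise vertex-disjoint, the contractions are independent, and after contracting, $G$ with these tracks becomes a triangulated grid (the contractions only affect the chosen columns, shortening them) with $\ge |M|/2 \ge f_{\ref{Kt grid middle line crossings bis}}(t)$ local jumps all on the internal row-pair $R_{i_0}, R_{i_0+1}$. Finally I would invoke Lemma~\ref{Kt grid middle line crossings bis} to extract a $K_t$-minor, which is then also a minor of $G \cup M$.

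The main obstacle I anticipate is verifying cleanly that the contraction of vertical tracks preserves the "triangulated grid" structure needed by Lemma~\ref{Kt grid middle line crossings bis}: contracting a column segment might create multi-edges or destroy the diagonal pattern locally near rows $R_{i_0}, R_{i_0+1}$. I would handle this by choosing the central row-pair to avoid the endpoints of all tracks (possible since $\ell'$ is large relative to $f_{\ref{Kt when a lot of local edges}}(t)$ and each track is "short" only in the sense of lying in one column), and by observing that Lemma~\ref{Kt grid middle line crossings bis} is robust under having a minor that merely contains such a configuration — formally, one passes to a subgrid on which the contracted structure is exactly a triangulated grid with the required local jumps, and applies the lemma there. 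Bookkeeping the internality constants through the subsampling and subgrid restriction is routine but must be done carefully, ensuring the final invocation sees rows that are genuinely $f_{\ref{Kt grid middle line crossings bis}}(t)$-internal in whatever subgrid we end up with.
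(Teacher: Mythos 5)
Your overall strategy (concentrate the local jumps onto one pair of consecutive internal rows and invoke \Cref{Kt grid middle line crossings bis}) is the right one, but the step where you make the column supports disjoint contains a genuine error. You claim that ``since $M$ is a matching of local jumps, each column of the grid is the left column of at most a bounded number of them'' and that a greedy selection of jumps with disjoint column supports therefore loses at most a factor $2$. This is false: pairwise disjoint local jumps can all live on the \emph{same} pair of consecutive columns $C_j\cup C_{j+1}$, one at each height, so a single column pair can carry up to $\Omega(\ell')$ jumps of the matching; in that situation your greedy selection retains essentially one jump, and the whole vertex-disjoint-tracks construction collapses. The missing idea is a dichotomy, which is exactly how the paper proceeds: if some pair of consecutive columns already contains at least $f_{\ref{Kt grid middle line crossings bis}}(t)$ jumps of $M$, then you are done immediately by applying \Cref{Kt grid middle line crossings bis} with the roles of rows and columns exchanged; otherwise every column index supports fewer than $f_{\ref{Kt grid middle line crossings bis}}(t)$ jumps, and with $f_{\ref{Kt when a lot of local edges}}(t)$ of order $2f_{\ref{Kt grid middle line crossings bis}}(t)^2$ the pigeonhole principle yields $2f_{\ref{Kt grid middle line crossings bis}}(t)$ jumps with pairwise distinct column indices, from which a stable set in a union of paths gives $f_{\ref{Kt grid middle line crossings bis}}(t)$ jumps whose column supports are pairwise disjoint. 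Without this case split your argument simply does not go through.

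A secondary point: even after the jumps are made column-disjoint, the paper does not route each jump along a partial vertical track to a fixed central row pair (which, as you yourself worried, need not leave a triangulated grid). Instead, for \emph{every} column it contracts the full segment of the internal strip below the (unique) jump meeting that column and the full segment above it (and a trivial split for columns meeting no jump), so each column of the strip collapses to exactly two vertices and the quotient is again a triangulated grid, now with all the selected jumps sitting on two consecutive internal rows. This uniform contraction is the clean way to realize your ``routing'' idea and removes the obstacle you left unresolved; with it, and with the dichotomy above, the reduction to \Cref{Kt grid middle line crossings bis} is sound.
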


\begin{proof}
    Let
    \[
        f_{\ref{Kt when a lot of local edges}}(t) = 1+2(f_{\ref{Kt grid middle line crossings bis}}(t) - 1) f_{\ref{Kt grid middle line crossings bis}}(t).
    \]
    Let $j_1=i_1=f_{\ref{Kt when a lot of local edges}}(t)$, $i_2=\ell'- f_{\ref{Kt when a lot of local edges}}(t)$ and $j_2=\ell-f_{\ref{Kt when a lot of local edges}}$,
    which are the indices of the first and last internal rows and columns.
    First suppose that there exists $j \in \{j_1, \dots ,j_2-1\}$ such that 
    the family $M$ contains at least $f_{\ref{Kt grid middle line crossings bis}}(t)$ pairs of vertices in $C_j\cup C_{j+1}$ (recall that for any $i$, $C_i$ denotes the $i$-th column). 
    Then, by \Cref{Kt grid middle line crossings bis}, $G \cup M$ contains $K_t$ as a minor (since all pairs in $M$ are local by assumption). (Strictly speaking, the lemma was stated for rows instead of columns, but the column-version holds as well by symmetry.)
    Now suppose that no such $j$ exists.

    For every $e \in M$, let $j_e \in \{j_1, \dots, j_2-1\}$ be such that 
    $e \subseteq C_{j_e} \cup C_{j_e+1}$.
    By the previous paragraph, there is no $j$ for which $j_e=j$ for at least $f_{\ref{Kt grid middle line crossings bis}}(t)$ elements $e\in M$. 
    By the pigeonhole principle, there exists $M_1 \subseteq M$ of size $2f_{\ref{Kt grid middle line crossings bis}}(t)$ such that
    the indices $j_e$ for $e \in M_1$ are pairwise distinct.
    
    Since the graph with vertex set $M_1$ and edges all the pairs $e,e' \in M_1$ with  $|j_e - j_{e'}| = 1$ is a disjoint union of paths, it has a stable set $M_2$ of size $\frac{|M_1|}{2} = f_{\ref{Kt grid middle line crossings bis}}(t)$.
    
    Let $j \in [\ell]$.
    By definition of $M_2$, at most one pair $e \in M_2$ intersects the $j^{\text{th}}$ column $C_j$. 
    It may be helpful to look at Figure~\ref{fig:a lot of local edges} while reading the notation below.
    \begin{figure}
        \centering
        \includegraphics{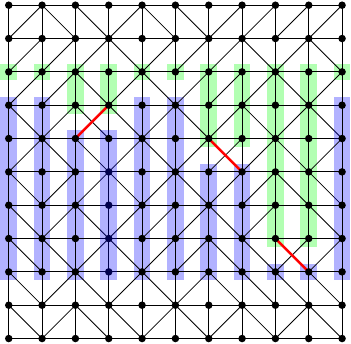}
        \caption{Illustration for the proof of \Cref{Kt when a lot of local edges}.
            In red are depicted the local pairs in $M_2$.
            In respectively green and blue are the set $D_j$ and $U_j$ for $j \in [\ell]$.
            Informally, contracting these sets moves the crossings to the middle row.} 
        \label{fig:a lot of local edges}
    \end{figure}
    If there exists $e\in M_2$ intersecting $C_j$, then let $y<y'$ and $x,x'\in \{j-1,j,j+1\}$ such that $e=(x,y)(x',y')$. We set
    \begin{align*}
        D_j &= \{(j,i) \mid i_1 \leq i \leq y\} \\
        U_j &= \{(j,i) \mid y' \leq i \leq i_2\}.
    \end{align*}
    If no such $e$ exists, let
    \begin{align*}
        D_j &= \{(j,i_1)\} \\
        U_j &= \{(j,i) \mid i_1+1 \leq i \leq i_2\}.
    \end{align*}
    In particular, $D_j$ denotes the ``down''-part of column $j$ and $U_j$ the ``upper'' part.
    
    Every edge of $M_2$ has one endpoint in $U_j$ and the other in $D_{j'}$ (for some $j,j'$).
    Let $G'$ be the graph obtained from $G$ by contracting $D_j$ and $U_j$ into single vertices for every $j \in [\ell']$. 
    Then $G'$ is isomorphic to a triangulated $\ell \times (\ell'-(i_2-i_1-1))$ grid. 
    Moreover, the set $M_2$ induces in $G'$ a family $M'$ of pairwise disjoint local pairs of $G'$ 
    not in $E(G')$, each of them included in the union of the $i_1^{\text{th}}$ and $(i_1+1)^{\text{th}}$ rows of $G'$.
    Furthermore, by assumption, these pairs consist of $f_{\ref{Kt when a lot of local edges}}(t)$-internal vertices. 
    Since $|M_2| \geq f_{\ref{Kt grid middle line crossings bis}}(t)$, and $f_{\ref{Kt when a lot of local edges}}(t) \geq f_{\ref{Kt grid middle line crossings bis}}(t)$
    we deduce by \Cref{Kt grid middle line crossings bis} that $G' \cup M'$ contains $K_t$ as a minor,
    and so $G \cup M$ does too.
    This proves the lemma. 
\end{proof}

We now extend \Cref{Kt when a lot of local edges} to the case where for all jumps  $uv\in M$, $u$ and $v$ are close in the grid.

\begin{lemma}
    \label{lem:slocal}
    There is a polynomial function $f_{\ref{lem:slocal}}\colon \mathbb{N}_{>0} \to \mathbb{N}_{>0}$ such that the following holds.
    Let $t, \ell,\ell'$ be positive integers with $\ell,\ell' \geq 2f_{\ref{lem:slocal}}(t)$. Let $G$ be a triangulated $\ell \times \ell'$ grid.
    For every set $M$ of pairwise disjoint jumps of $G$,
    if
    \begin{enumerate}
        \item for every $uv \in M$, $d_G(u,v)\le 20t^2$ and $u,v$ are $f_{\ref{lem:slocal}}(t)$-internal; and
        \item $|M| \geq f_{\ref{lem:slocal}}(t)$; 
    \end{enumerate}
    then $G \cup M$ contains $K_t$ as a minor.
\end{lemma}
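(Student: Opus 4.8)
The plan is to deduce \Cref{lem:slocal} from \Cref{Kt when a lot of local edges} by passing to a minor of $G \cup M$ in which a bounded but still sufficiently large number of the jumps of $M$ have become \emph{local}, while the host graph is still a triangulated grid with these jumps internal. Recall that $d_G(u,v)\le 20t^2$ forces $\max(|x-x'|,|y-y'|)\le 20t^2$ for a jump $(x,y)(x',y')$, since every edge of a triangulated grid changes each coordinate by at most one; thus each jump of $M$ is confined to a $(20t^2{+}1)\times(20t^2{+}1)$ window. I will call $[\min(x,x'),\max(x,x'){-}1]$ its \emph{column-band} (an interval of length $\le 20t^2$) and define its \emph{row-band} symmetrically.

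First I would extract a sub-family $M_1 \subseteq M$ of size exactly $f_{\ref{Kt when a lot of local edges}}(t)$ whose column-bands are pairwise disjoint, or, symmetrically, whose row-bands are pairwise disjoint. This is where $|M|$ needs to be polynomially large in $t$: since the column-bands are intervals, a greedy leftmost-endpoint selection either already yields $f_{\ref{Kt when a lot of local edges}}(t)$ pairwise disjoint bands, or exhibits a single column met by $\ge |M|/f_{\ref{Kt when a lot of local edges}}(t)$ of them; in the latter case all those jumps lie in a vertical strip of width $\le 40t^2{+}2$, and repeating the argument with row-bands inside that strip either succeeds with disjoint row-bands or confines $\ge |M|/f_{\ref{Kt when a lot of local edges}}(t)^2$ pairwise vertex-disjoint jumps to a box with $O(t^4)$ vertices — impossible once $|M| \ge f_{\ref{lem:slocal}}(t) := C\,(f_{\ref{Kt when a lot of local edges}}(t))^2 t^4$.

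Second, for each jump of $M_1$ I would contract its column-band uniformly in every row. Since these bands are pairwise disjoint, this is a composition of column-band contractions, so the result is again a triangulated grid $G'$, now narrower by at most $\sum_{e\in M_1} 20t^2 = \mathrm{poly}(t)$ columns — hence still of side $\ge 2 f_{\ref{Kt when a lot of local edges}}(t)$ — and each jump of $M_1$ still has internal endpoints. After this contraction the two endpoints of each such jump lie in consecutive columns; performing the symmetric row-band contraction (again affecting only a $\mathrm{poly}(t)$-bounded total) brings them into consecutive rows as well. A short case analysis on the two triangulation diagonals near the endpoint "behind" the band — adjusting, if needed, whether the last column or row of the band is included — shows that one can always arrange for the two endpoints to become the \emph{missing} diagonal of a unit cell of $G'$, i.e.\ a local jump, still internal; the truly axis-aligned jumps are handled by the same contractions together with one extra uniform band contraction. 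Applying \Cref{Kt when a lot of local edges} to $G'$ with these $f_{\ref{Kt when a lot of local edges}}(t)$ local internal jumps produces a $K_t$-minor of $G'$, hence of $G \cup M$.

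The main obstacle is the second step: organizing the contractions so that they simultaneously (i) keep the host a triangulated grid — which is exactly why only uniform row-/column-band contractions are permissible, and hence why the disjoint-band reduction of the first step is needed — (ii) never make the two endpoints of a surviving jump adjacent (contracting along a path joining the endpoints of a jump would merge that path with the jump edge and destroy it), and (iii) land each jump precisely on a missing cell-diagonal, which is what forces the diagonal-orientation case analysis and the separate treatment of axis-aligned jumps. The interval-coloring argument of the first step is what keeps $f_{\ref{lem:slocal}}$ polynomial in $t$.
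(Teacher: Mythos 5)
Your step 1 (extracting $f_{\ref{Kt when a lot of local edges}}(t)$ jumps whose column-bands, or row-bands, are pairwise disjoint, at the cost of a $\poly(t)$ factor in $|M|$) is sound, and you have correctly isolated where the difficulty lies. But step 2, the claim that uniform row/column band contractions plus a ``short case analysis'' always turn each selected jump into a local, still-missing diagonal, is where the argument genuinely fails. Three concrete problems: (i) axis-aligned jumps can \emph{never} become local under your move set: a uniform column-band contraction preserves row coordinates and a uniform row-band contraction maps equal rows to equal rows, so a jump whose endpoints share a row (or a column) keeps coordinate difference $0$ in that direction, whereas locality requires both differences to equal $1$; contracting the columns between them instead makes the endpoints adjacent and destroys the jump. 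The sentence ``the truly axis-aligned jumps are handled by the same contractions together with one extra uniform band contraction'' is therefore not repairable as stated, and $M$ could consist entirely of such jumps. (ii) Even for non-aligned jumps the case analysis can have no good branch: take a jump $(x,y)(x{+}1,y{+}2)$ where both intermediate cells carry the ``rising'' diagonals $(x,y)(x{+}1,y{+}1)$ and $(x,y{+}1)(x{+}1,y{+}2)$. Contracting rows $\{y,y{+}1\}$ or $\{y{+}1,y{+}2\}$ makes the endpoints adjacent via one of these diagonals, contracting all three rows (or first merging the two columns) makes them adjacent via a grid edge, and doing nothing leaves the jump non-local; no uniform band contraction works. (iii) Your first step only gives disjointness in \emph{one} coordinate, so the ``symmetric row-band contraction'' is not a composition of disjoint band contractions: the row-bands of distinct jumps of $M_1$ may overlap arbitrarily, and contracting the rows needed for one jump can make the endpoints of another jump (already in consecutive columns) lie in one row, hence adjacent.

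These are exactly the obstructions the paper's proof is built to avoid. There, one does not try to align jumps by coordinate contractions at all: one picks every $100t^2$-th row and column with a random shift (plus boundary rows/columns to guarantee internality), so that a constant fraction of the jumps fall strictly inside a single supercell, keeps at most one jump per supercell, and contracts connected bags around the branch vertices. The point is that the bags are \emph{not} forced to be full row/column segments: by the $2$-linkage theorem of Seymour and Thomassen, inside each supercell the bags can be rerouted so that the two endpoints of the surviving jump land in two distinct non-adjacent bags, i.e.\ on a missing diagonal of the coarser triangulated grid, regardless of the jump's orientation or of the local triangulation. That rerouting flexibility is precisely what uniform band contractions lack, and without it (or some substitute) your step 2 does not go through.
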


\begin{proof}
    Consider a subset of columns $\mathcal{C}=C_{i_1}, \ldots,C_{i_a}$ with $1=i_1<\cdots<i_a=\ell$ and a subset of rows $\mathcal{R}=R_{j_1}, \ldots,R_{j_b}$ with $1=j_1<\cdots<j_b=\ell'$. Note that the corresponding vertices in $G$ induce a subdivision $H$ of the $a\times b$ grid. Call the vertices of $G$ in the intersection of one of these rows and one of these columns \emph{branch vertices}. Note that we can find disjoint connected subgraphs (call them \emph{bags}) in $G$, each containing a different branch vertex, such that after contracting each of these bags into a single vertex, the resulting graph is a triangulated $a \times b$ grid $G'$. Moreover, it follows from the 2-linkage theorem of Seymour \cite{Sey80} and Thomassen \cite{Tho80} that for any jump $uv\in M$ lying strictly between two consecutive columns of $\mathcal{C}$ and two consecutive rows of $\mathcal{R}$,  $uv$ becomes a local jump in $G'$ (in the sense that we can choose the bags in such a way that $u$ and $v$ are included in two distinct and non-adjacent bags). 
    
    This shows that if we can find  a subset $\mathcal{C}=C_{i_1}, \ldots,C_{i_a}$ of $a$ columns and a subset $\mathcal{R}=R_{j_1}, \ldots,R_{j_b}$ of $b$ rows such that at least $s$ rectangular open regions bounded by consecutive columns $C_{i_c}, C_{i_{c+1}}$ and consecutive rows $R_{i_d}, R_{i_{d+1}}$ contain a jump of $M$, then $G\cup M$ contains as a minor a triangulated $a\times b$ grid $G'$ together with the edges corresponding to a set $M'$ of at least $s$ local jumps in $G'$. 
    Moreover if for each of the $s$ jumps as above, we have $k\le c \le a-k$  and $k\le d \le b-k$, then the corresponding local jumps of $M'$ are $k$-internal in $G'$. This is illustrated in Figure \ref{fig:localgrid}. If $a,b,s,k\ge 2f_{\ref{Kt when a lot of local edges}}(t)$ it follows from \Cref{Kt when a lot of local edges} that $G'\cup M'$ contains $K_t$ as a minor, and thus $G\cup M$ also contains $K_t$ as a minor. So we only need to find sets $\mathcal{R}$ and $\mathcal{C}$ of rows and columns as above.

    \medskip

    \begin{figure}[htb]
    \centering
    \includegraphics{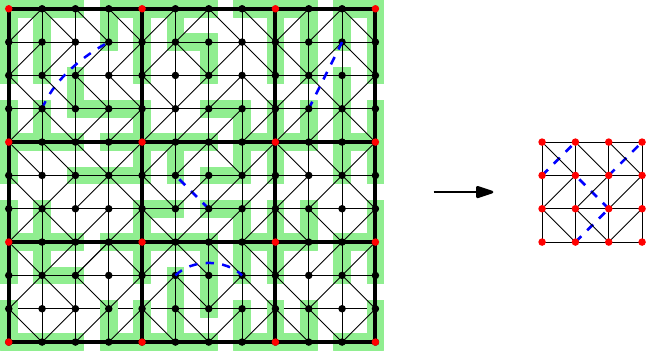}
    \caption{The rows of $\mathcal{R}$ and columns of $\mathcal{C}$ are in bold. Branch vertices are in red, and bags are depicted in light green. Jumps are depicted with dashed blue lines.}
    \label{fig:localgrid}
\end{figure}

    We set $f_{\ref{lem:slocal}}(t)=\tfrac{10}3(100 t^2)^2\cdot f_{\ref{Kt when a lot of local edges}}(t)$. We start by including in $\mathcal{R}$ the first and last $f_{\ref{Kt when a lot of local edges}}(t)$ rows of $G$, and we include in $\mathcal{C}$ the first and last $f_{\ref{Kt when a lot of local edges}}(t)$  columns of $G$. In addition, we choose an integer $p\in [100t^2]$ uniformly at random, we include in $\mathcal{R}$ each row whose index modulo $100t^2$ is equal to $p$ and in $\mathcal{C}$ each column of $G$ whose index modulo $100t^2$ is equal to $p$. Consider a jump $uv\in M$. As $d_G(u,v)\le 20t^2$, the probability that $u$ and $v$ are separated by a row of $\mathcal{R}$ or a column of $\mathcal{C}$ is at most $2\cdot \tfrac1{100t^2}\cdot 20t^2\le \tfrac25$. It follows that in expectation, $\tfrac35\,|M|$ jumps of $M$ are contained within two consecutive rows and columns of $\mathcal{R}$ and $\mathcal{C}$. Each region bounded by two consecutive rows and columns contains at most $(100t^2)^2$ vertices by definition, and since the jumps of $M$ are pairwise disjoint, such a region contains at most $(100t^2)^2$ jumps of $M$. It follows that there is a choice of $\mathcal{R}$ and $\mathcal{C}$ such that at least \[\tfrac35\,|M|\cdot \tfrac1{(100t^2)^2}\ge 2f_{\ref{Kt when a lot of local edges}}(t)\] regions bounded by consecutive rows and columns contain a jump of $M$. Since we have included in $\mathcal{C}$ and $\mathcal{R}$ the first and last $f_{\ref{Kt when a lot of local edges}}(t)$ rows and columns of $G$, the jumps we extract are all $f_{\ref{Kt when a lot of local edges}}(t)$-internal, as desired.
\end{proof}

It remains to consider the case where for many jumps $uv\in M$, $u$ and $v$ are far apart in the grid. The result below was proved in a slightly stronger form in \cite{Kawarabayashi2018}.

\begin{lemma}[Lemma 4.5 in \cite{Kawarabayashi2018}]
    \label{lem:nonslocal}
    There is a polynomial function $f_{\ref{lem:nonslocal}}\colon \mathbb{N}_{>0} \to \mathbb{N}_{>0}$ such that the following holds.
    Let $t, \ell,\ell'$ be positive integers with $\ell,\ell' \geq 2f_{\ref{lem:nonslocal}}(t)$.
    Let $G$ be a triangulated $\ell \times \ell'$ grid.
    For every set $M$ of pairwise disjoint jumps of $G$,
    if
    \begin{enumerate}
        \item for every $uv \in M$, at least one of $u,v$ is  $20t^2$-internal; 
        \item all the vertices that are an endpoint of a jump from $M$ lie pairwise at distance  more than $20t^2$ in $G$; 
        \item $|M| \geq f_{\ref{lem:nonslocal}}(t)$; 
    \end{enumerate}
    then $K_t$ is a minor of $G \cup M$.
\end{lemma}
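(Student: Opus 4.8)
The plan is to prove \Cref{lem:nonslocal} by reducing it to \Cref{Kt grid middle line crossings bis}: we will use the separation hypothesis~(2) to route, for a large subfamily of the jumps, pairwise disjoint ``corridors'' inside the grid that transport each long jump onto two fixed consecutive central rows, where it becomes a \emph{local} crossing, and then invoke \Cref{Kt grid middle line crossings bis}. (One could instead hope to reduce to \Cref{lem:slocal} by coarsening the grid, but long jumps have endpoints in far-apart cells of any coarsening, so this does not work directly.)

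Set $f_{\ref{lem:nonslocal}}(t)$ to be a large enough polynomial in $t$ (much larger than both $f_{\ref{Kt grid middle line crossings bis}}(t)$ and, say, $t^{6}$), and pick two consecutive $f_{\ref{Kt grid middle line crossings bis}}(t)$-internal rows $R_j,R_{j+1}$ of $G$ (possible since $\ell'\ge 2f_{\ref{lem:nonslocal}}(t)$). First, greedily extract from $M$ a subfamily $M'$ with $|M'|=f_{\ref{Kt grid middle line crossings bis}}(t)$ whose endpoints are pairwise at grid distance larger than some fixed polynomial $D=\poly(t)$: since $|M|\ge f_{\ref{lem:nonslocal}}(t)$ and the jumps of $M$ are vertex-disjoint, each chosen jump forbids only $O(D^{2})$ others, so a subfamily of the required size survives when $f_{\ref{lem:nonslocal}}(t)$ is a large enough polynomial. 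Then comes the main step: for each $u_iv_i\in M'$, with $u_i$ the $20t^{2}$-internal endpoint, route inside $G$ a path $A_i$ from $u_i$ to the vertex $(2i-1,j)$ and a path $B_i$ from $v_i$ to $(2i,j+1)$, so that the $2|M'|$ paths are pairwise disjoint and each avoids a small ball around every other terminal. This is a grid-linkage problem, solvable because the grid is much larger than $|M'|$ and the terminals are $D$-separated: assign to each $i$ a private horizontal band of rows together with the two target columns $2i-1,2i$, and route $A_i,B_i$ monotonically (as staircases) through that band down to $R_j\cup R_{j+1}$. Contracting each $A_i$ and each $B_i$ produces a minor of $G\cup M$ in which the jumps of $M'$ become a family of $f_{\ref{Kt grid middle line crossings bis}}(t)$ pairwise disjoint local jumps contained in two consecutive $f_{\ref{Kt grid middle line crossings bis}}(t)$-internal rows, so \Cref{Kt grid middle line crossings bis} yields a $K_t$-minor there, hence in $G\cup M$.

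The main obstacle — and the place where real care is needed — is that contracting an arbitrary path of a triangulated grid does not return a triangulated grid, so the claim that the contracted graph still contains a triangulated grid with all the transported jumps sitting on $R_j\cup R_{j+1}$ is not automatic. I would address this exactly as in the proof of \Cref{Kt when a lot of local edges}: route the corridors $A_i,B_i$ not as arbitrary paths but as a union of one vertical segment of a single column with one horizontal segment of a single row, all lying in pairwise disjoint column-bands and row-bands, and then perform the contractions band by band in the uniform way used there (contracting an entire upper chunk and an entire lower chunk of each column between two fixed rows, and symmetrically for rows), so that every intermediate graph is again a smaller triangulated grid. Hypothesis~(1), that each jump has a $20t^{2}$-internal endpoint, is what guarantees there is always room on the internal side both to start a corridor and to keep the final local jumps $f_{\ref{Kt grid middle line crossings bis}}(t)$-internal; hypothesis~(2), the pairwise separation of all endpoints, is precisely what keeps all the bands disjoint. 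Orchestrating these uniform contractions so that they simultaneously transport all $f_{\ref{Kt grid middle line crossings bis}}(t)$ selected jumps to their prescribed local positions on $R_j\cup R_{j+1}$, without any two bands interfering, is the technical heart of the argument.
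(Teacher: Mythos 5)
You are trying to re-prove a statement that the paper itself does not prove: \Cref{lem:nonslocal} is quoted as Lemma~4.5 of the cited reference (proved there in a slightly stronger form), and the paper uses it as a black box. So there is no ``paper proof'' to match, and your attempt has to stand on its own; unfortunately it has a genuine gap exactly at the step you call the technical heart. The preliminary sparsification is fine (each chosen jump excludes only $O(D^2)$ others, so a $\poly(t)$-separated subfamily $M'$ of size $f_{\ref{Kt grid middle line crossings bis}}(t)$ survives), but the main step --- routing pairwise disjoint corridors that carry each long jump onto two fixed consecutive rows \emph{and} contracting them so that what remains is again a triangulated grid with the transported jumps local --- is asserted rather than proved, and the specific fixes you propose do not work as described. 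First, pairwise grid distance greater than $20t^2$ (or any $\poly(t)$) does not give distinct columns or rows: all endpoints of $M'$ could lie in a single column, so ``pairwise disjoint column-bands and row-bands'' for L-shaped corridors is simply not available in general; and even when columns are distinct, the vertical access segment of one corridor must cross the private horizontal band of another whenever their columns and bands interleave, so with the prescribed pairing constraint (both endpoints of jump $i$ must land on two \emph{adjacent} slots of $R_j\cup R_{j+1}$) you are facing a genuine planar linkage problem whose solvability you never establish.

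Second, and more fundamentally, the contraction issue is not resolved by appealing to the proof of \Cref{Kt when a lot of local edges}. The uniform contraction there (the sets $D_j,U_j$) moves each jump endpoint only \emph{vertically within its own column}, and it returns a triangulated grid precisely because the jumps are already local, with at most one jump per pair of adjacent columns; column-locality is preserved automatically. For long jumps you must transport endpoints \emph{horizontally} across many columns, and there is no analogous uniform row-contraction that simultaneously brings $|M'|$ prescribed pairs of far-apart columns together while keeping the $2|M'|$ images in distinct, correctly paired bags --- contracting the strip between the two columns of one jump swallows or merges endpoints of other jumps lying in that strip. So the sentence ``perform the contractions band by band \dots so that every intermediate graph is again a smaller triangulated grid'' is exactly the content of the cited Lemma~4.5, not a routine adaptation of \Cref{Kt when a lot of local edges}; this is presumably why the authors cite the non-local case rather than reducing it to \Cref{Kt grid middle line crossings bis} as they do for the short-jump case in \Cref{lem:slocal}. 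To make your approach rigorous you would need to actually construct the linkage (handling shared columns/rows, the pairing constraint, and planarity) and prove a grid-restructuring statement strong enough to replace the citation, which your sketch does not do.
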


We are now ready to prove the main  result of this section,  \Cref{Kt grid one interior bis}, which we restate below for convenience.

\ktgridoneinteriorbis*

\begin{proof}
Let $f_{\ref{Kt grid one interior bis}}(t)=f_{\ref{lem:slocal}}(t)+(2(40t^2+1)^2+1)f_{\ref{lem:nonslocal}}(t)+20t^2$.

Assume first that $M$  contains at least $f_{\ref{lem:slocal}}(t)$ jumps $uv$ such that $d_G(u,v)\le 20t^2$. Note that for such a pair $uv$, 
if at least one of $u,v$ is $s$-internal (for some constant $s$) then both $u$ and $v$ are $(s-20t^2)$-internal. It follows that all jumps  $uv\in M$ with  $d_G(u,v)\le 20t^2$ are $f_{\ref{lem:slocal}}(t)$-internal and thus \Cref{lem:slocal} implies that $K_t$ is a minor of $G \cup M$, as desired.

We can now assume that $M$  contains at least $(2(40t^2+1)^2+1)f_{\ref{lem:nonslocal}}(t)$ jumps $uv$ with $d_G(u,v)> 20t^2$. As $G$ is a triangulated grid, for every vertex $v\in V(G)$, the number of vertices of $G$ lying at distance at most $20t^2$ from $v$ in $G$ is at most $(40t^2+1)^2$. Since the jumps of $M$ are pairwise disjoint, it follows that every jump of $M$ lies at distance at most $20t^2$ from at most $2(40t^2+1)^2$ jumps of $M$. We can thus find a subset $M'\subseteq M$
 of $f_{\ref{lem:nonslocal}}(t)$ of jumps $uv\in M$ with $d_G(u,v)> 20t^2$, and such that all endpoints of the elements of $M'$  lie pairwise at distance more than $20t^2$ in $G$. We can now apply \Cref{lem:nonslocal}, which implies that $K_t$ is a minor of $G \cup M$. 
\end{proof}

\section{Treedepth definitions and proof of Corollary~\ref{td upper bound}}\label{sec:td}
In a rooted tree $T$, we say that $u$ is an \emph{ancestor} of $v$ if $u$ lies on the unique path between the root and $v$ in $T$. A \emph{rooted forest} is a disjoint union of rooted trees, and the ancestor relation naturally extends to rooted forests.
The \emph{transitive closure} of a rooted forest $F$ is the graph $(V(F), E(F)\cup E')$ obtained by adding to $F$ the set of edges $E' = \{uv\mid u \text{ is an ancestor of }v\}$.
An \emph{elimination forest} for a graph $G$ is a rooted forest $F$ such that $V(G) \subseteq V(F)$ and for every edge $uv\in E(G)$ there exists a root-to-leaf path $P$ in $F$ such that $u,v\in V(P)$. In other word, $G$ is a subgraph of the \emph{transitive closure} of $F$.
The \emph{depth} of a rooted forest $F$ is the maximum number of vertices over all root-to-leaf paths $P$ in $F$. 
The \emph{treedepth} of a graph $G$, denoted $\td(G)$, is the minimum depth over all the elimination forests of $G$.

\begin{remark}\label{rem:td} Note that we can always assume that there is an optimal elimination forest $F$ for $G$ such that $V(G)=V(F)$. To see this, it suffices to observe that if some root $r$ of a tree $T$ of $F$ is such that 
no vertex of $G$ is mapped to $r$, then we can replace $T$ by $T-r$ in $F$ (and the observation then follows by a repeated applications of this operation to the lower levels of the forests).
\end{remark}

We can observe that for any DFS-ordering  of the leaves of an elimination forest $F$ of $G$, the sequence of root-to-leaf paths in $F$ gives a path-decomposition of $G$, and thus $\tw(G)\le \pw(G)\le \td(G)-1$ for any graph $G$.

\medskip

Given a graph class $\mathcal{G}$, we denote by $\mathcal{G}_\mathrm{conn}$  the class of connected graphs from $\mathcal{G}$. We say that a graph class is \emph{summable} if the disjoint union of any two graphs in the class is still in the class (this property holds for all classes we consider in this paper). We say that a class is \emph{subtractable} if for any graph in the class, all its connected components are also in the class (this property holds in particular for any hereditary class). The next result will be useful to be able to restrict ourselves to connected graphs.

\begin{lemma}\label{lem:connected}
Let $\mathcal{G}$ be a graph class which is summable and subtractable. Assume that for any $n$, there is a graph $U_n\in \mathcal{G}$ that contains all $n$-vertex graphs from  $\mathcal{G}_\mathrm{conn}$ as subgraph (resp.\ induced subgraph). Then for every  $n$ there is a graph  $U_n'\in \mathcal{G}$ on at most $\sum_{i=0}^{\lceil \log_2 n \rceil} |V(U_{2^i})| \frac{n}{2^{i-1}}$ vertices that contains all $n$-vertex graphs from  $\mathcal{G}$ as subgraph (resp.\ induced subgraph).
\end{lemma}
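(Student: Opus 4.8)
The plan is to build $U_n'$ by taking, for each possible ``component size profile'' of an $n$-vertex graph in $\mathcal{G}$, an appropriate disjoint union of copies of the connected universal graphs $U_{2^i}$, and then gluing all these together (again by disjoint union) into a single graph of $\mathcal{G}$. The key observation is that any $n$-vertex graph $G \in \mathcal{G}$ has connected components $C_1,\dots,C_r$ (each in $\mathcal{G}$ by subtractability, and each in $\mathcal{G}_{\mathrm{conn}}$), with $\sum_j |V(C_j)| = n$. If $|V(C_j)| \le 2^i$, then $C_j$ embeds (as subgraph, resp.\ induced subgraph) into $U_{2^i}$, since $U_{2^i}$ contains all $2^i$-vertex connected graphs of $\mathcal{G}$ and we may pad $C_j$ with isolated vertices to reach exactly $2^i$ vertices --- note a single isolated vertex is connected, so adding isolated vertices keeps us inside $\mathcal{G}_{\mathrm{conn}}$ only after we've handled the component individually; more carefully, $C_j$ together with $2^i - |V(C_j)|$ isolated vertices is a graph in $\mathcal{G}$ all of whose components are in $\mathcal{G}_{\mathrm{conn}}$, but to embed it we just need each component to embed, and $U_{2^i}$ being universal for $2^i$-vertex connected graphs handles $C_j$ directly once we note $C_j$ itself has at most $2^i$ vertices, so we can pad $C_j$ to a $2^i$-vertex connected graph by adding a matching/path on the extra vertices attached to one vertex of $C_j$, or simply observe that induced-universality for $2^i$-vertex connected graphs implies containment of all smaller connected graphs as well by the same padding-with-a-pendant-path argument.

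Concretely, I would first record the following easy fact: if $U_m$ contains every $m$-vertex connected graph of $\mathcal{G}$ as (induced) subgraph, and $\mathcal{G}$ is summable and subtractable, then $U_m$ also contains every connected graph of $\mathcal{G}$ on at most $m$ vertices as (induced) subgraph --- because a connected graph $C$ on $m' \le m$ vertices can be extended to a connected graph on exactly $m$ vertices (attach a path of length $m - m'$ at any vertex of $C$), which is still in $\mathcal{G}$ if $\mathcal{G}$ is, say, hereditary; but since the lemma only assumes summable and subtractable, I should instead argue directly: build $U_n'$ from blocks indexed by $i \in \{0,1,\dots,\lceil \log_2 n\rceil\}$, where block $i$ consists of $\lceil n/2^{i-1}\rceil$ disjoint copies of $U_{2^i}$, so block $i$ has at most $|V(U_{2^i})| \cdot n/2^{i-1}$ vertices, and $U_n' := $ disjoint union of all blocks lies in $\mathcal{G}$ by summability. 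The vertex count is then $\sum_{i=0}^{\lceil \log_2 n\rceil} |V(U_{2^i})| \cdot n/2^{i-1}$ as claimed.

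It remains to verify universality. Given $G \in \mathcal{G}$ on $n$ vertices with components $C_1,\dots,C_r$, group the components greedily into bins so that each bin has total size in $(2^{i-1}, 2^i]$ for the smallest possible $i$ --- more precisely, for each component $C_j$ let $i_j = \lceil \log_2 |V(C_j)|\rceil$ (so $|V(C_j)| \le 2^{i_j} \le 2|V(C_j)|$), and assign $C_j$ to a fresh copy of $U_{2^{i_j}}$ in block $i_j$. The number of components assigned to block $i$ is at most the number of components with $|V(C_j)| > 2^{i-1}$, which is at most $n/2^{i-1}$, so there are enough copies. Each $C_j$ embeds into its assigned copy of $U_{2^{i_j}}$ once we know $U_{2^{i_j}}$ contains every connected graph of $\mathcal{G}$ of size at most $2^{i_j}$ as (induced) subgraph; to get this from the hypothesis (which only gives $2^{i_j}$-vertex connected graphs), I would add to $C_j$ a pendant path on $2^{i_j} - |V(C_j)|$ vertices to obtain a connected graph $C_j^+$, note $C_j^+ \in \mathcal{G}$ (this needs a small extra closure property --- if the statement truly only assumes summable and subtractable, one instead embeds $C_j$ into $U_{2^{i_j}}$ after first forming the disjoint union of $C_j$ with isolated vertices, which is in $\mathcal{G}$ by summability since a single vertex graph is in $\mathcal{G}_{\mathrm{conn}}$, then applying universality to that $2^{i_j}$-vertex graph, whose components are all connected and in $\mathcal{G}$, hence each embeds individually --- in particular $C_j$ does). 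Since distinct components go to distinct copies of universal graphs which are placed in a disjoint union, the individual (induced) embeddings combine into an (induced) embedding of $G$ into $U_n'$. I expect the only real subtlety is the bookkeeping around ``$U_m$ universal for $m$-vertex connected graphs implies universal for $\le m$-vertex connected graphs,'' which as noted follows cleanly by padding with isolated vertices and using summability, together with the fact that an induced embedding of a disjoint union restricts to induced embeddings of the parts.
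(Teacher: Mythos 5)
Your construction is exactly the paper's: bucket the components of $G$ dyadically, reserve about $n/2^{i-1}$ copies of $U_{2^i}$ for the components of order in $(2^{i-1},2^i]$, take the disjoint union of all blocks (which stays in $\mathcal{G}$ by summability), and combine the component-wise embeddings, which is legitimate since a connected graph must embed inside a single copy. The counting is fine as well — each component sent to block $i$ has more than $2^{i-1}$ vertices, so $\lfloor n/2^{i-1}\rfloor$ copies suffice; use floors rather than ceilings so as not to overshoot the stated vertex bound.

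The gap is in the step you yourself identify as the crux: that $U_{2^i}$ contains every \emph{connected} graph of $\mathcal{G}$ on \emph{at most} $2^i$ vertices. Your fallback argument — pad $C_j$ with isolated vertices up to exactly $2^{i_j}$ vertices, note the padded graph lies in $\mathcal{G}$ by summability, ``apply universality'' and restrict to $C_j$ — is circular: the padded graph is disconnected, and the hypothesis only says that \emph{connected} $2^{i_j}$-vertex members of $\mathcal{G}$ embed into $U_{2^{i_j}}$; it asserts nothing about disconnected members, and the claim that ``each of its components embeds individually'' is precisely the statement you are trying to prove. (Also, $K_1\in\mathcal{G}_{\mathrm{conn}}$ does not follow from summability and subtractability.) The pendant-path variant needs a closure property that is not assumed, as you note. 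In fact no argument from the stated hypotheses alone can close this: let $\mathcal{G}$ consist of all disjoint unions of triangles and $4$-vertex paths; then $U_4=P_4$ satisfies the hypothesis at $n=4$ (the only connected $4$-vertex member is $P_4$), yet the triangle — a connected member on at most $4$ vertices — does not embed into it. So ``universal for exactly $m$-vertex connected graphs'' does not imply ``universal for connected graphs on at most $m$ vertices'' at this level of generality. To be fair, the paper's own proof uses the same step silently, and it is harmless in the intended application (for treedepth, $U_m$ is the transitive closure of $T_{m,k}$, which visibly contains the closure of $T_{m',k}$ for all $m'\le m$); but your explicit justification of it does not go through as written, so this step needs either a strengthened hypothesis (universality for graphs on at most $n$ vertices) or an application-specific argument.
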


\begin{proof}
Let $f(n)=|V(U_n)$|.  For a graph $G\in \mathcal{G}$ and $0\le i \le \lceil\log_2 n\rceil$,
the union of the connected components of $G$ of order between $2^{i-1}$ and $2^i-1$
appears as a subgraph (resp.\ induced subgraph) in the disjoint union of $\frac{n}{2^{i-1}}$ copies of the graph $ U_{2^i}$.
By taking the union over all $i$, we obtain a subgraph-universal (resp.\ induced-universal) graph of order 
$\sum_{i=0}^{\lceil \log_2 n \rceil} f(2^i) \frac{n}{2^{i-1}}$.
\end{proof}

Note that for $c>1$, $|V(U_n')|=O(n^c)$ if and only if $|V(U_n)|=O(n^c)$. Therefore, in this paper, we can focus on universal graphs for connected graphs.

\begin{lemma}\label{td upper bound2}
    For every $n,k\in \mathbb{N}$, the rooted tree $T_{n,k}$ is an elimination tree for every connected $n$-vertex graph of treedepth at most $k$.
\end{lemma}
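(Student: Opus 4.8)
The plan is as follows. Fix a connected $n$-vertex graph $G$ with $\td(G)\le k$; the goal is to produce an injection of $V(G)$ into $V(T_{n,k})$ under which every edge of $G$ becomes a pair of vertices lying on a common root-to-leaf path of $T_{n,k}$. The first step is to reduce to a single elimination \emph{tree}. By \cref{rem:td}, $G$ has an elimination forest $F$ of depth at most $k$ with $V(F)=V(G)$. I claim $F$ has a single component: if $F$ had two components with vertex sets $A$ and $A'$, then no root-to-leaf path of $F$ meets both $A$ and $A'$, so $G$ has no edge between $A$ and $A'$; since the components of $F$ partition $V(G)$ and $G$ is connected, $F$ must be a single rooted tree $T$. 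By construction $T$ has exactly $n$ vertices, depth at most $k$, and is an elimination tree for $G$.

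The second step is to embed $T$ into $T_{n,k}$ by an injection $\phi\colon V(T)\to V(T_{n,k})$ preserving the ancestor relation. This is precisely the observation recorded just before \cref{td upper bound}, that $T_{n,k}$ contains every rooted tree of depth at most $k$ on at most $n$ vertices: I would argue by induction on the level, sending the root of $T$ to the root of $T_{n,k}$ and, having defined $\phi$ on a node $x$, sending the (at most $n-1$) children of $x$ injectively to distinct children of $\phi(x)$ in $T_{n,k}$. This is possible because every internal node of $T_{n,k}$ has exactly $n$ children and $T$ has depth at most $k$, so the process never reaches below the leaves of $T_{n,k}$. The resulting $\phi$ is injective, and if $u$ is an ancestor of $v$ in $T$ then $\phi(u)$ is an ancestor of $\phi(v)$ in $T_{n,k}$.

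Finally, identifying each $v\in V(G)$ with $\phi(v)$ puts $V(G)$ inside $V(T_{n,k})$. For any edge $uv\in E(G)$, since $T$ is an elimination tree for $G$, one of $u,v$ is an ancestor of the other in $T$, hence the same holds in $T_{n,k}$, so $u$ and $v$ lie on a common root-to-leaf path of $T_{n,k}$. Thus $T_{n,k}$ is an elimination tree for $G$, which is exactly the statement. I do not expect any genuine obstacle here: the only two points that need a line of justification are the reduction of the elimination forest to a single elimination tree (which is where connectedness of $G$ is used) and the level-by-level embedding into $T_{n,k}$; everything else is bookkeeping, and the lemma is essentially a packaging of the ``easy to see'' remark preceding \cref{td upper bound} together with this connectedness reduction.
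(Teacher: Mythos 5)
Your proposal is correct and follows essentially the same route as the paper: pass from an elimination forest with $V(F)=V(G)$ (via \cref{rem:td} and connectedness) to a single elimination tree $T$ of depth at most $k$ on at most $n$ vertices, and then embed $T$ into $T_{n,k}$ as a subtree so that ancestors are preserved, whence $G$ sits inside the transitive closure of $T_{n,k}$. You merely spell out two steps the paper leaves implicit (the forest-to-tree reduction and the level-by-level embedding), which is fine.
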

\begin{proof}
    Let $G$ be a connected graph of treedepth at most $k$. Then there is a rooted tree $T$ of depth $k$ such that $T$ is an elimination tree of $G$ with $V(T)=V(G)$ (see Remark \ref{rem:td}). In particular, $T$ has at most $n$ vertices.
    Therefore, $T$ is a subtree of $T_{n,k}$.
    This implies that the transitive closure of $T$ is a subgraph of the transitive closure of $T_{n,k}$, and thus $G$ is a subgraph of the transitive closure of $T_{n,k}$.
    Therefore, $T_{n,k}$ is an elimination tree of $G$.
\end{proof}

Combining this lemma with \Cref{lem:connected} and the fact that $|V(T_{n,k})| = \frac{n^k-1}{n-1} \leq 2n^{k-1}$, we obtain  faithful subgraph-universal graphs of polynomial order for any class of graphs of bounded treedepth.

\end{document}